\theoremstyle{plain}
\newtheorem{thm}{\bf Theorem}[subsection]
\newtheorem{lem}[thm]{\bf Lemma}
\newtheorem{cor}[thm]{\bf Corollary}
\newtheorem{prop}[thm]{\bf Proposition}
\theoremstyle{definition}
\newtheorem{nota}[thm]{\bf Notations}
\newtheorem{rem}[thm]{\bf Remark}
\theoremstyle{definition}
\newtheorem{defn}[thm]{\bf Definition}
\newcommand{\bbA}{{\mathbb A}}
\newcommand{\bbC}{{\mathbb C}}
\newcommand{\bbF}{{\mathbb F}}
\newcommand{\bbN}{{\mathbb N}}
\newcommand{\bbP}{{\mathbb P}}
\newcommand{\bbQ}{{\mathbb Q}}
\newcommand{\bbZ}{{\mathbb Z}}
\newcommand{\bG}{{\bf G}}
\newcommand{\cA}{{\mathcal A}}
\newcommand{\cD}{{\mathcal D}}
\newcommand{\ccH}{{\mathcal H}}
\newcommand{\cP}{{\mathcal P}}
\newcommand{\cR}{{\mathcal R}}
\newcommand{\cpH}{{^p\mathcal{H}}}
\newcommand{\rH}{{\rm H}}
\newcommand{\sE}{{\mathscr E}}
\newcommand{\sL}{{\mathscr L}}
\newcommand{\sO}{{\mathscr O}}
\newcommand{\Spec}{{\rm Spec}}
\DeclareMathOperator{\KK}{K}
\DeclareMathOperator{\im}{Image}
\DeclareMathOperator{\colim}{\mathrm{colim}}
\title{Perverse Filtrations via Brylinski-Radon transformations}
\author{Ankit Rai}
\address{Chennai Mathematical Institute, Chennai 603103, India.}
\email{ankitr@cmi.ac.in}
\author{K.V. Shuddhodan}
\address{Institut des Hautes \'Etudes Scientifiques, Universit\'e Paris-Scalay, CNRS, Laboratoire Alexandre Grothendieck, Le Bois-Marie 35 rte de Chartres, 91440 Bures-sur-Yvette, France}
\email{kvshud@ihes.fr}
\thanks{KVS was supported by the CARMIN project fellowship.}
\begin{document}
\maketitle
\begin{abstract}
In this article, we prove the $t$-exactness of a Brylinski-Radon transformation taking values in sheaves on flag varieties. This implies several weak Lefschetz type results for cohomology. In particular, we obtain de Cataldo-Migliorini's P=Dec(F) and Beilinson's basic lemma, the latter was an important ingredient in their proof of P=Dec(F). Our methods also allow the sharpening of Esnault-Katz's cohomological divisibility theorem and estimates for the Hodge level. Finally, we upgrade P=Dec(F) to an equivalence of functors which is also valid over a base.
\end{abstract}

\numberwithin{equation}{section}
\setcounter{tocdepth}{1}
\tableofcontents

\section{Introduction}

Let $Z$ be a variety\footnote{A variety is an irreducible and separated scheme of finite type over $k$.} of dimension $n$ and $\KK$ be a sheaf on it. Let $Z_{\bullet}:=\lbrace Z = Z_0 \supset Z_{-1} \supset Z_{-2} \supset \dots \supset Z_{-n} \supset Z_{-n-1} = \emptyset \rbrace$ be a filtration of $Z$ by closed subvarieties $Z_{-p}$ of codimension $p$ in $Z$. Using $Z_{\bullet}$ we may define a filtration, $F^{\bullet}$ on $\mathrm{H}^i(Z, \KK)$ called the \textit{flag filtration} (associated to $Z_{\bullet}$) as follows
\begin{equation} \label{eqn:geometric-filtration-def}
    F^j\mathrm{H}^i(Z, \KK) \colonequals \ker\left( \mathrm{H}^i(Z, \KK) \rightarrow \mathrm{H}^i(Z_{j-1}, \KK|_{Z_{j-1}})\right).
\end{equation}

On the other hand using the perverse truncation functors one can associate a natural decreasing filtration $P^{\bullet}$ on $\mathrm{H}^i(Z, \KK)$ called the \textit{perverse filtration} as follows
\begin{equation} \label{eqn:perverse-filtration-def}
    P^j\left( \mathrm{H}^i(Z, \KK)\right) \colonequals \im\left( \mathrm{H}^i(Z, {}^p\tau_{\leq -j}\KK) \rightarrow \mathrm{H}^i(Z, \KK)\right).
\end{equation}
One of the principal results of \cite{deCM10} is the following result comparing the above two filtrations.

\begin{thm}(de Cataldo-Migliorini)\cite[Theorem 4.1.1]{deCM10}
\label{thm:affine-perverse=geometric}
Let $Z \subseteq \bbA^N$ be an affine variety and let $\KK$ be a sheaf on $Z$. Let $Z_{\bullet}$ be a full flag as above, obtained by (repeatedly) intersecting $Z$ with generic hyperplane sections. Then 
\[
    P^{j}\mathrm{H}^i(Z, \KK) = F^{i+j}\mathrm{H}^i(Z, \KK). 
\]
\end{thm}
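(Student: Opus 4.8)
The plan is to derive Theorem~\ref{thm:affine-perverse=geometric} from the $t$-exactness of the Brylinski--Radon transform $\cR$, valued in sheaves on the flag variety $\cF$ of (complete) flags of linear subspaces, which is the main theorem of this paper. The organizing idea is that $\cR\KK$, together with its filtration by the universal flag subvarieties, is precisely the object recording the flag filtrations $F^{\bullet}\HH^{\ast}(Z,\KK)$ as the flag varies over $\cF$; once this object is known to be perverse up to shift, evaluating it at a generic point of $\cF$ forces the flag filtration there to coincide with the d\'ecalage of the perverse filtration, and one can in fact phrase the whole thing as a functorial identification of the perverse filtration $({}^{p}\tau_{\leq-j}\KK)_{j}$ with the shifted universal flag filtration on $\cR\KK$ --- this being the statement behind the promised equivalence of functors over a base. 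Concretely, the $t$-exactness of $\cR$, together with the fact that the perverse cohomology sheaves of a constructible complex are lisse on a dense open of $\cF$ (generic base change), forces the stalks of the relevant subquotients of $\cR\KK$ at a generic flag $L_{\bullet}$ to be concentrated in a single cohomological degree; unwinding the incidence correspondence, this is the basic lemma of Beilinson: for a perverse sheaf $M$ on an affine variety $W$ and a generic hyperplane section $i_{Y}\colon Y\hookrightarrow W$, both $i_{Y}^{\ast}M[-1]$ and $i_{Y}^{!}M[1]$ are perverse on $Y$, and hence, by Artin--Grothendieck vanishing and Verdier duality, $\HH^{a}(W,M)\to\HH^{a}(Y,i_{Y}^{\ast}M)$ is an isomorphism for $a\leq-2$ and injective for $a=-1$.

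\medskip

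\noindent I would then reduce to $\KK$ perverse. The identity $P^{j}=F^{i+j}$ is additive along the triangles ${}^{p}\tau_{\leq m}\KK\to\KK\to{}^{p}\tau_{\geq m+1}\KK\xrightarrow{+1}$ once the associated long exact sequences of cohomology break into short exact sequences compatibly with both filtrations, a strictness which is itself a consequence of the basic lemma; so this reduction and the basic lemma are carried out together, by induction on the perverse amplitude of $\KK$. For $\KK$ perverse on the affine $n$-fold $Z\subseteq\bbA^{N}$, Artin--Grothendieck vanishing together with perversity gives $\HH^{i}(Z,\KK)=0$ for $i\notin[-n,0]$, while by \eqref{eqn:perverse-filtration-def} we have $P^{j}\HH^{i}(Z,\KK)=\HH^{i}(Z,\KK)$ for $j\leq0$ and $P^{j}\HH^{i}(Z,\KK)=0$ for $j\geq1$. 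Hence it is enough to prove, for the generic flag $Z_{\bullet}$ and every $i$, the two assertions: (a) the restriction $\HH^{i}(Z,\KK)\to\HH^{i}(Z_{i},\KK|_{Z_{i}})$ is injective --- this forces $F^{i+1}\HH^{i}=0$, hence $F^{i+j}\HH^{i}=0=P^{j}\HH^{i}$ for all $j\geq1$; and (b) the restriction $\HH^{i}(Z,\KK)\to\HH^{i}(Z_{i-1},\KK|_{Z_{i-1}})$ is the zero map --- this forces $F^{i}\HH^{i}=\HH^{i}$, hence $F^{i+j}\HH^{i}=\HH^{i}=P^{j}\HH^{i}$ for all $j\leq0$.

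\medskip

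\noindent Both (a) and (b) follow by descending the flag one generic hyperplane section at a time. Iterating the sheaf-level basic lemma, the complex $\KK|_{Z_{-p}}[-p]$ is perverse on the affine variety $Z_{-p}$ of dimension $n-p$, for every $p\geq0$ and a generic flag. Applying the cohomological basic lemma to this perverse sheaf and the generic section $Z_{-p-1}\subseteq Z_{-p}$, and undoing the shift, the restriction $\HH^{c}(Z_{-p},\KK|_{Z_{-p}})\to\HH^{c}(Z_{-p-1},\KK|_{Z_{-p-1}})$ is an isomorphism for $c\leq-p-2$ and injective for $c=-p-1$. Composing these maps in degree $i$ along the $|i|$ steps from $Z=Z_{0}$ down to $Z_{i}=Z_{-|i|}$ --- the first $|i|-1$ of them isomorphisms, the last injective --- gives (a). For (b), take one further step: the target $\HH^{i}(Z_{i-1},\KK|_{Z_{i-1}})$ is $\HH^{1}$ of the perverse sheaf $\KK|_{Z_{i-1}}[i-1]$ on the affine variety $Z_{i-1}$ (of dimension $n-1-|i|$), which vanishes by Artin--Grothendieck vanishing; so the composite in (b) is zero. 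Running (a) and (b) over all $i$ and combining with the d\'evissage yields $P^{j}\HH^{i}(Z,\KK)=F^{i+j}\HH^{i}(Z,\KK)$.

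\medskip

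\noindent The real obstacle is the \emph{coherence of the genericity conditions}: Theorem~\ref{thm:affine-perverse=geometric} requires one flag $Z_{\bullet}$ that is simultaneously adapted to $\KK$, to all of its iterated flag restrictions $\KK|_{Z_{-p}}$, and to the d\'evissage; and it is exactly the fibrewise perversity of the single sheaf $\cR\KK$ on the single space $\cF$ --- our main theorem --- that produces such a uniformly good dense open locus of flags, sidestepping the cascade of ``for a general hyperplane'' choices one would otherwise have to intersect. Beyond that, what is left is bookkeeping: matching the shift $i$ in $F^{i+j}$ against $\dim Z=n$, the codimensions $p=\operatorname{codim}_{Z}(Z_{-p})$, and the perverse amplitude of $\KK$; and dispatching the mild circularity between the strictness used in the d\'evissage and the basic lemma by the single induction on perverse amplitude already mentioned.
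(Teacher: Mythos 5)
Your proposal reproduces the spirit of the original de~Cataldo--Migliorini argument (reduce to $\KK$ perverse, iterate the basic lemma down a generic flag) rather than the route taken here. The paper never reduces to the perverse case and never iterates. Instead, for each $d$ it proves Theorem~\ref{thm:affine-radon1} directly: applying $\cpH^{\bullet}$ to the triangle~\eqref{basic_triangle} for $\phi_*\KK$ and $\phi_*{}^p\tau_{\leq i}\KK$, and using that $\cR_{d!}[d-N+1]\circ\phi_*$ is left $t$-exact while $\cR_d\circ\phi_*$ is right $t$-exact (Proposition~\ref{Grass_Rad_Corollary}), one gets $\ker(\alpha_i)=\mathrm{Im}(\beta_i)$ \emph{on the nose}, for an arbitrary sheaf $\KK$ and one Grassmannian $\bG(d)$ at a time; restricting to a generic open of $\bG(d)$ and reindexing $a=d-N+1+i$, $j=-i$ gives $P^{j}\HH^{a}=F^{a+j}\HH^{a}$. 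This sidesteps both of the obstacles you identify: there is no d\'evissage at all, and the genericity needed is just ``one dense open in $\bG(d)$ for each $d$,'' which trivially assemble to a generic flag, rather than a cascade of choices along a descending chain.

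The genuine gap in your proposal is the d\'evissage to perverse~$\KK$. You assert that $P^{j}=F^{i+j}$ is ``additive along the triangles ${}^p\tau_{\leq m}\KK\to\KK\to{}^p\tau_{>m}\KK$ once the long exact sequences break into short exact sequences compatibly with both filtrations,'' and attribute the needed strictness to the basic lemma. But knowing that two filtrations compatible with a short exact sequence agree on the outer terms does \emph{not} determine them on the middle term: both are only sandwiched between the common image of the sub and the common preimage of the quotient. Pinning them down requires, in effect, the isomorphism of the two spectral sequences --- which is exactly the technical heart of the original de~Cataldo--Migliorini proof that this paper is designed to bypass. Your sketch does not supply that step, and the ``single induction on perverse amplitude'' does not obviously dissolve it, because the strictness you need at amplitude $m$ mixes perverse degrees in a way that the perverse-case argument (your (a) and (b)) does not address. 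By contrast, the argument in \S\ref{P_Dec(F)_Coho} avoids this entirely by proving, for each $i$ separately, that $\cpH^i(\cR_{d!}[d-N+1](\phi_*{}^p\tau_{>i}\KK))=0$ and $\cpH^{i+1}(\cR_d(\phi_*{}^p\tau_{\leq i}\KK))=0$, which yields both inclusions of the filtration equality at once, without ever comparing the triangle for $\KK$ to those of its truncations. Your reduction to the perverse case and the iterated weak Lefschetz computation (your (a), (b)) are correct as far as they go, and your diagnosis of the genericity problem and its resolution via the single object over $\mathrm{Fl}$ is accurate in spirit; but as written the d\'evissage step would need to be replaced either by a genuine spectral-sequence comparison or by the paper's triangle argument on each $\bG(d)$.
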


The proof of Theorem \ref{thm:affine-perverse=geometric} proceeds by constructing an isomorphism between the perverse spectral sequence and the shifted flag spectral sequence given by a generic flag, and obtaining the isomorphism on the filtrations as a corollary. A key vanishing result that allows them to compare these spectral sequences is the strong weak Lefschetz theorem\footnote{This is the name used in \cite[Theorem 5.1.2]{deCM10}. A form of the statement in \cite{Nor02} is called the basic lemma.} due to Beilinson \cite[Theorem 5.1.2]{deCM10}. They also prove a quasi-projective version of the above result \cite[Theorem 4.2.1]{deCM10}, using a similar strategy. The vanishing result needed again comes from the strong weak Lefschetz theorem. 

\subsection{Brylinski's Radon transform}The goal of this article is to study perverse filtrations using Brylinski's generalization of the Radon transforms \cite{Br}. These are sheaf theoretic analogues of the usual Radon transforms and take as input sheaves on $\bbP$ and produce a sheaf on the Grassmannian $\bG(d)$. In his article, Brylinski gave several applications of the transform to the Lefschetz theory \cite[I.III]{Br} and especially to the microlocal study of sheaves. More recently in his seminal article \cite{B}, Beilinson used Brylinski's Radon transform to give a construction of the singular support in the algebraic setting.

Our starting point is the observation that, there are strong restrictions on the perverse cohomologies of the Radon transforms of perverse sheaves on $\bbP$, that come from an affine scheme mapping quasi-finitely to $\bbP$. These restrictions suitably reinterpreted give rise to several vanishing statements. 

\subsubsection{Perverse filtrations in practice}An interesting special case of the perverse spectral sequence (and hence the perverse filtration) is when $\KK$ is a pushforward of a sheaf along a morphism, and in this case, the spectral sequence is the perverse analogue of the usual Leray spectral sequence. In recent years the perverse filtration has been an active area of research and is one-half of the eponymous P=W conjecture of de Cataldo–Hausel–Migliorini \cite{dHM12}. There is a rich body of work around this topic and we mention a few references here \cite{MS22}, \cite{HMMS22}, \cite{dMS22}. Finally, we conclude this section by fixing some notations and conventions

\subsubsection{\'etale sheaves}
Through this article, we work over an algebraically closed field $k$ and a prime $\ell$ invertible in $k$. We also fix a coefficient ring $\Lambda$ which is either a finite ring with torsion invertible in $k$ or an algebraic extension $E/\bbQ_{\ell}$. For any scheme $X/k$\footnote{For us schemes are always separated and finite type over the base field $k$.}, we denote by $D^b_c(X)$ the bounded derived category of constructible \'etale sheaves with finite tor-dimension and with coefficient in $\Lambda$. In what follows by a sheaf we simply mean an object in $D^b_c(X)$. We shall call a sheaf lisse if all its cohomology sheaves are lisse. Finally $D^b_c(X)$ comes equipped with the standard \cite[1.1.2, (e)]{Del80} and a perverse $t$-structure \cite[\S 4.0]{BBDG18}\footnote{Through this article we will deal exclusively with the perverse $t$-structure (with respect to the middle perversity).}. We shall denote the associated perverse truncation and cohomology functors by $(^p\tau^{\leq 0},{}^p\tau^{\geq 0})$ and $\cpH^i$ respectively.

\section{Summary of results}

Let $\bbP$ be the projective space of dimension $N$ over $k$. For an integer $d$ with $0 \leq d \leq N$ we denote the Grassmannian of $d$-planes in $\bbP$ by $\bG(d)$. Thus in this notation $\bG(N-1)$ is the dual space of $\bbP$, also denoted by $\bbP^{\vee}$. As a convention we set $\bG(d)=\emptyset$ for $d<0$ and $\bG(d)=\bG(N)$ for $d > N$. For $d \geq 0$ (resp. $d<0$) and a closed point $v \in \bG(d)$, we denote by $\Lambda_v$ the corresponding linear subspace (resp. the empty set).

\subsection{$\textup{P}=\textup{Dec(F)}$ on cohomology}

let $Z$ be a scheme and $\phi \colon Z \to \bbP$ be a quasi-finite morphism. Then the following result is proved in \S \ref{P_Dec(F)_Coho}.

\begin{thm}\label{thm:affine-radon1}
For any sheaf $\KK$ on $Z$, there exists an open dense $V \subseteq \bG(d)$ such that, for any closed point $v$ of $V$ (with $\Lambda_v$ the corresponding $d$-plane) and any integer $i$,
\begin{equation*}\label{affine-radon2}
P^{-i}\mathrm{H}^{d-N+1+i}(Z, \KK) \supseteq \ker \left(\mathrm{H}^{d-N+1+i}(Z, \KK) \to \mathrm{H}^{d-N+1+i}(\phi^{-1}(\Lambda_{v}),\KK|_{\phi^{-1}(\Lambda_{v})})\right).
\end{equation*}
Moreover, when $Z$ is affine, we may choose $V$ such that the above inclusion becomes an equality.
\end{thm}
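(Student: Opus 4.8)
The plan is to reduce the assertion, by standard manipulations with perverse truncation and recollement, to two vanishing statements about generic linear sections, and then to derive those from the $t$-exactness of the Brylinski--Radon transform proved earlier. Write $c:=N-d$ and $m:=d-N+1+i=i+1-c$, and let $j_v\colon Z\setminus\phi^{-1}(\Lambda_v)\hookrightarrow Z$ and $\iota_v\colon\phi^{-1}(\Lambda_v)\hookrightarrow Z$ denote the complementary open and closed immersions.

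First I would rewrite both sides. The open--closed triangle $j_{v!}j_v^*\KK\to\KK\to R\iota_{v*}\iota_v^*\KK$ identifies $\ker\big(\HH^{m}(Z,\KK)\to\HH^{m}(\phi^{-1}(\Lambda_v),\KK|)\big)$ with the image of $\HH^{m}(Z,j_{v!}j_v^*\KK)$, whereas by definition $P^{-i}\HH^m(Z,\KK)=\im\big(\HH^m(Z,{}^p\tau_{\leq i}\KK)\to\HH^m(Z,\KK)\big)$. Hence the inclusion $\supseteq$ is equivalent to the vanishing of the composite $\HH^m(Z,j_{v!}j_v^*\KK)\to\HH^m(Z,\KK)\to\HH^m(Z,{}^p\tau_{\geq i+1}\KK)$, and, when $Z$ is affine, the reverse inclusion to the vanishing of $\HH^m(Z,{}^p\tau_{\leq i}\KK)\to\HH^m(Z,\KK)\to\HH^m(\phi^{-1}(\Lambda_v),\KK|)$. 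By naturality of the recollement transformations these two composites factor through $\HH^m(Z,j_{v!}j_v^*\,{}^p\tau_{\geq i+1}\KK)$ and through $\HH^m(\phi^{-1}(\Lambda_v),\iota_v^*\,{}^p\tau_{\leq i}\KK)$ respectively, so after perverse dévissage in $\KK$ it suffices to prove, for every perverse sheaf $\cP$ on $Z$ and a generic $d$-plane $\Lambda_v$: \emph{(a)} $\HH^j(Z,j_{v!}j_v^*\cP)=0$ for all $j\leq -c$; and \emph{(b)} if $Z$ is affine, $\HH^j(\phi^{-1}(\Lambda_v),\iota_v^*\cP)=0$ for all $j\geq d-N+1$.

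Claim \emph{(b)} is the easy half: for a generic $\Lambda_v$ the iterated generic transversality already available from the Lefschetz results makes $\iota_v^*\cP[c]$ perverse on $\phi^{-1}(\Lambda_v)$, so $\iota_v^*\cP\in{}^pD^{\leq -c}$, and since $\phi^{-1}(\Lambda_v)$ is affine (as $Z$ is) Artin's affine Lefschetz theorem kills $\HH^j$ for $j>-c$, a fortiori for $j\geq d-N+1$. Claim \emph{(a)} is the heart of the matter, and is where the Brylinski--Radon transform does the work. I would push $\cP$ forward along $\phi$: the point is that $\phi_!\cP$ lies in ${}^pD^{\leq 0}(\bbP)$ because $\phi$ is quasi-finite, while $\HH^*(Z,j_{v!}j_v^*\cP)$ and its Verdier dual are governed by $\HH^*$ and $\HH^*_c$ of $\iota_v^*\cP$. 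Applying the (suitably shifted) Brylinski--Radon transform $\cR$ to $\phi_!\cP$ — and, dually, to $\phi_*\cP$ — and invoking its $t$-exactness bounds the perverse amplitude of these transforms on the flag variety; since the projection of the incidence correspondence to the flag variety is the proper $\bbP^d$-bundle, the stalks and costalks of these transforms at a point lying over $v$ recover, up to the fixed shift, $R\Gamma_c(\phi^{-1}(\Lambda_v),\iota_v^*\cP)$ and $R\Gamma(\phi^{-1}(\Lambda_v),\iota_v^*\cP)$ (proper base change along the fibre together with the ordinary base change for $\phi_!$). The principle that the perverse amplitude of a complex bounds the ordinary amplitude of its stalk at a generic point then yields the vanishing \emph{(a)} for $v$ in a dense open $V$; Deligne's generic base change is what allows passing from the generic behaviour of the relative pushforward over the flag variety to its individual fibres over $V$. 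When $Z$ is affine, $\phi_!\cP$ is already perverse and one may in fact take $V=\bG(d)$.

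The hard part will be claim \emph{(a)}, and within it the fact that $\phi$ is only quasi-finite, not proper: then neither $\phi_!$ nor $\phi_*$ is $t$-exact, and the naive base-change isomorphisms identifying restrictions to linear sections with fibre cohomology break down. Circumventing this is precisely what the Brylinski--Radon transform and its $t$-exactness are for; a direct Artin--Lefschetz induction over hyperplane sections would only handle the case where $\phi$ is proper, hence finite. The secondary, delicate point is the bookkeeping of the shift in $\cR$: it must be normalized so that the dictionary ``perverse amplitude $\leftrightarrow$ generic stalk amplitude'' lands precisely in the range $j\leq -c$ and so that the resulting perverse filtration comes out with index $-i$ in cohomological degree $d-N+1+i$. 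In the affine case the resulting equality is a strengthening of de Cataldo--Migliorini's Theorem~\ref{thm:affine-perverse=geometric}, from closed immersions and generic full flags to quasi-finite morphisms and a single generic linear section.
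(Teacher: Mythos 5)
Your reduction is sound: rewriting the kernel via the recollement triangle $j_{v!}j_v^*\KK \to \KK \to \iota_{v*}\iota_v^*\KK$, and $P^{-i}$ via ${}^p\tau_{\leq i}$, and then d\'evissaging to perverse constituents, the theorem is indeed equivalent to your vanishings (a) and (b). Claim (b) is fine — generic transversality plus Artin vanishing on the affine $\phi^{-1}(\Lambda_v)$. And you are right that the theorem strengthens de Cataldo--Migliorini from full flags to a single generic $d$-plane. But your derivation of (a) is misrouted, and as written has a genuine gap.

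You propose to apply the ``middle'' Radon transform $\cR_d$ to $\phi_!\cP$ (and dually $\cR_d$ to $\phi_*\cP$) and read off $R\Gamma_c(\phi^{-1}(\Lambda_v),\iota_v^*\cP)$ resp.\ $R\Gamma(\phi^{-1}(\Lambda_v),\iota_v^*\cP)$ from the generic stalk, then invoke $t$-exactness to bound the ordinary amplitude. The problem is that claim (a) is supposed to hold for $\phi$ merely quasi-finite — no affineness — and in that generality neither $\cR_d\circ\phi_*$ nor $\cR_d\circ\phi_!$ carries a one-sided $t$-exactness bound: Proposition~\ref{Grass_Rad_Corollary}(b) (right $t$-exactness of $\cR_d\circ\phi_*$) and its dual both \emph{require} $Z/S$ affine. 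What the quasi-finite hypothesis alone gives you is the \emph{left} $t$-exactness of $\cR_{d!}[d-N+1]\circ\phi_*$ (Proposition~\ref{Grass_Rad_Corollary}(a)) — and that is the transform you should be using for (a). Its generic stalk computes precisely $\HH^*(Z, j_{v!}j_v^*\cP)$ up to the fixed shift: compare the recollement triangle $j^\bbP_{v!}j^{\bbP,*}_v\phi_*\cP \to \phi_*\cP \to i^\bbP_{v*}i^{\bbP,*}_v\phi_*\cP$ on $\bbP$ with the $\phi_*$-pushforward of the recollement triangle on $Z$; for $v$ generic the third terms agree by generic base change, whence the first terms agree, so $R\Gamma(\bbP, j^\bbP_{v!}j^{\bbP,*}_v\phi_*\cP)\simeq R\Gamma(Z, j_{v!}j_v^*\cP)$. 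Then the left $t$-exactness plus Lemma~\ref{perverse_usual} bound the generic stalk and yield (a). (The shift bookkeeping: the transform lands in ${}^pD^{\geq 0}$, so the lisse generic stalk has $\ccH^j=0$ for $j < -\dim\bG(d)$, which unwinds exactly to $\HH^j(Z, j_{v!}j_v^*\cP)=0$ for $j\leq d-N=-c$.) With this fix, your two-vanishing decomposition gives a correct proof.

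For comparison, the paper avoids the decomposition into (a) and (b) entirely: it applies $\cpH^i$ to the exact triangle $\cR_d \to \cR_{d!}[d-N+1] \to \pi^\vee_{d*}\pi_d^\dagger[d-N+1]$ for both $\phi_*\KK$ and $\phi_*{}^p\tau_{\leq i}\KK$, obtains a commutative diagram of perverse sheaves on $\bG(d)$, uses the vanishing of $\cpH^i(\cR_{d!}[d-N+1](\phi_*{}^p\tau_{>i}\KK))$ (from Prop.~\ref{Grass_Rad_Corollary}(a)) to get $\ker(\alpha_i)\subseteq\Im(\beta_i)$, and in the affine case the vanishing of $\cpH^{i+1}(\cR_d(\phi_*{}^p\tau_{\leq i}\KK))$ (from Prop.~\ref{Grass_Rad_Corollary}(b)) for the reverse inclusion; then it restricts to the dense open $V$ where $\cR_d(\phi_*\KK)$ is lisse and base change holds to identify the two sides. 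This keeps everything at the level of sheaves on $\bG(d)$ until the last step and in particular never needs the triangle-comparison on generic fibres that your route does. Both approaches are driven by the same Proposition~\ref{Grass_Rad_Corollary}; the paper's is the more economical packaging.
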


We prove Theorem \ref{thm:affine-radon1} without resorting to spectral sequences, and in fact do so by fixing the codimension $d$, as opposed to dealing with full flag. By varying $d$ and renumbering the indices it is clear that Theorem \ref{thm:affine-radon1} implies Theorem \ref{thm:affine-perverse=geometric}. 

Our proof of Theorem \ref{thm:affine-radon1} follows from certain $t$-exactness properties of Brylinski-Radon transformations (see Proposition \ref{Grass_Rad_Corollary}). These as opposed to Theorem \ref{thm:affine-radon1} remain valid over a base. We now discuss a special case of Proposition \ref{Grass_Rad_Corollary}.

\subsection{$t$-exactness of $\cR_{N-1!} \circ \phi_{*}$ implies the basic lemma}
Consider the diagram

\[
\begin{tikzcd}
    & Q \arrow[r,hook,"i"] & \bbP \times \bbP^{\vee} \arrow[rd,"\pi^{\vee}"] \arrow[ld,"\pi"'] & U \arrow[l,hook',"j"'] \\
    Z \ar[r,"\phi"] & \bbP & & \bbP^{\vee},
\end{tikzcd}
\]
\noindent here $Q$ is the universal hyperplane section and $U$ is its open complement. Define a functor $\cR_{N-1!} \colon D^b_c(\bbP) \to D^b_c(\bbP^{\vee})$, as follows

\[
\cR_{N-1!}:=\pi^{\vee}_*j_!j^!\pi^{\dagger}\footnote{Give a smooth morphism $f:X \to Y$ of relative dimension $d$ with geometrically connected fibers, we set $f^{\dagger}:=f^*[d]$.}.
\]

A special case of Proposition \ref{Grass_Rad_Corollary} is the following result.

\begin{prop}\label{Cor_R_!_Intro}
    Let $\phi \colon Z \to \bbP$ be a quasi-finite morphism. Then the functor $\cR_{N-1!} \circ \phi_*$ is left $t$-exact. Moreover, when $Z$ is affine, it is $t$-exact.
\end{prop}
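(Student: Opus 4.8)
First I would rewrite the functor. Since $\pi^{\vee}$ is proper and $j$ is an open immersion (so $j^{!}=j^{*}$), there is a canonical isomorphism $\cR_{N-1!}\cong (\pi^{\vee}_{U})_{!}\circ\pi_{U}^{*}[N]$, where $\pi_{U}\colon U\to\bbP$ and $\pi^{\vee}_{U}\colon U\to\bbP^{\vee}$ denote the restrictions of the two projections to $U$. Two properties of these maps will be used repeatedly: $\pi_{U}$ is smooth of relative dimension $N$, and $\pi^{\vee}_{U}$ is an affine morphism. The latter holds because $Q$ is the zero locus of a section of $\cO_{\bbP}(1)\boxtimes\cO_{\bbP^{\vee}}(1)$, which is $\pi^{\vee}$-relatively ample, so its complement $U$ is affine over $\bbP^{\vee}$.

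For the left $t$-exactness of $\cR_{N-1!}\circ\phi_{*}=(\pi^{\vee}_{U})_{!}\circ\pi_{U}^{*}[N]\circ\phi_{*}$, I would check that each of the three functors in the composite is left $t$-exact. Since $\phi$ is quasi-finite, Zariski's main theorem writes it as an open immersion followed by a finite morphism; open pushforwards are left $t$-exact and finite pushforwards are $t$-exact, so $\phi_{*}$ is left $t$-exact. Next, $\pi_{U}^{*}[N]$ is $t$-exact because $\pi_{U}$ is smooth of relative dimension $N$. Finally $(\pi^{\vee}_{U})_{!}$ is left $t$-exact by Artin vanishing, as $\pi^{\vee}_{U}$ is affine. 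Composing the three gives the first assertion.

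When $Z$ is affine one cannot obtain right $t$-exactness the same way, because $(\pi^{\vee}_{U})_{!}$ is \emph{not} right $t$-exact (already $\cR_{N-1!}(\Lambda_{\bbP}[N])\cong\Lambda_{\bbP^{\vee}}(-N)$ sits in perverse degree $N$). Instead I would apply $\pi^{\vee}_{*}$ to the excision triangle $j_{!}j^{!}\pi^{\dagger}F\to\pi^{\dagger}F\to i_{*}i^{*}\pi^{\dagger}F\xrightarrow{+1}$ for $F=\phi_{*}\KK$. Using that $\pi^{\vee}$ is proper, that $\pi\circ i$ and $\pi^{\vee}\circ i$ are the projections $p\colon Q\to\bbP$ and $q\colon Q\to\bbP^{\vee}$, and base change for the constant term ($R\Gamma(\bbP,\phi_{*}\KK)=R\Gamma(Z,\KK)$), this produces a triangle
\[
\cR_{N-1!}(\phi_{*}\KK)\longrightarrow a_{\bbP^{\vee}}^{*}R\Gamma(Z,\KK)[N]\longrightarrow q_{*}p^{*}\phi_{*}\KK[N]\xrightarrow{\ +1\ },
\]
where $a_{\bbP^{\vee}}\colon\bbP^{\vee}\to\Spec k$. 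Now assume $Z$ affine and $\KK\in{}^{p}D^{\leq0}(Z)$. The middle term lies in ${}^{p}D^{\leq0}(\bbP^{\vee})$: Artin vanishing on the affine scheme $Z$ gives $R\Gamma(Z,\KK)\in D^{\leq0}$, and $a_{\bbP^{\vee}}^{*}[N]$ is $t$-exact. For the third term, smooth base change along $p$ identifies $q_{*}p^{*}\phi_{*}\KK[N]\cong\rho_{*}\hat{\psi}^{*}\KK[N]$, where $W\colonequals Q\times_{\bbP}Z$, $\hat{\psi}\colon W\to Z$ is the induced $\bbP^{N-1}$-bundle, and $\rho\colon W\to\bbP^{\vee}$ is the composite of the base change $W\to Q$ with $q$. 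Crucially, when $Z$ is affine the morphism $\rho$ is affine: $W$ is the closed subscheme $(\phi\times\mathrm{id})^{-1}(Q)$ of $Z\times\bbP^{\vee}$, and $Z\times\bbP^{\vee}\to\bbP^{\vee}$ is affine. Since $\hat{\psi}$ is smooth of relative dimension $N-1$ we have $\hat{\psi}^{*}\KK[N]=(\hat{\psi}^{\dagger}\KK)[1]\in{}^{p}D^{\leq-1}(W)$, so Artin vanishing for $\rho$ gives $q_{*}p^{*}\phi_{*}\KK[N]\in{}^{p}D^{\leq-1}(\bbP^{\vee})$. The long exact sequence of perverse cohomology attached to the triangle then forces $\cpH^{k}(\cR_{N-1!}(\phi_{*}\KK))=0$ for all $k\geq1$, i.e. $\cR_{N-1!}(\phi_{*}\KK)\in{}^{p}D^{\leq0}(\bbP^{\vee})$. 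Combined with left $t$-exactness, this yields $t$-exactness.

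The inputs used in the first two paragraphs are routine (smooth pullback, Artin vanishing for affine morphisms, pushforward along quasi-finite maps, the excision triangle). The step I expect to need the most care is the affine case: recognizing that one should replace the factorization $(\pi^{\vee}_{U})_{!}\circ\pi_{U}^{*}[N]$ by the triangle comparing $\cR_{N-1!}$ with the "closed" Radon transform $q_{*}p^{*}(-)[N]$, and isolating the geometric fact that affineness of $Z$ is precisely what makes $\rho\colon Q\times_{\bbP}Z\to\bbP^{\vee}$ an affine morphism; this, together with the $[1]$-shift coming from $\hat{\psi}$ being a $\bbP^{N-1}$-bundle, supplies exactly the room needed in the long exact sequence.
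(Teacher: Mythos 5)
Your left $t$-exactness argument is identical to the paper's: both of you rewrite $\cR_{N-1!}\circ\phi_*$ as $(\pi^\vee_U)_!\circ\pi_U^*[N]\circ\phi_*$ (the paper writes it as $(\pi^{\vee}_{\overline{X}}\circ j_{\overline U})_!\,j^!_{\overline U}\,\pi^{\dagger}_{\overline X}\,\phi_*$, which is the same thing), and then invoke Artin vanishing for the affine map $\pi^\vee_U\colon U\to\bbP^{\vee}$ together with left $t$-exactness of $\phi_*$.

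For right $t$-exactness (the case $Z$ affine) your route genuinely differs from the paper's. The paper (via Lemmas \ref{vanishing_lemma_sta}, \ref{vanishing_lem_sta_2} and Corollary \ref{R_!_description}) shows that $j_!$ can be commuted past $\phi_*$, giving the alternative factorization $\cR_{N-1!}\circ\phi_*\simeq(\pi^{\vee}_{\overline X}\circ\phi_X)_*\,j_{U!}\,j^!_U\,\pi_X^{\dagger}$; right $t$-exactness then follows because $j_{U!}$ is right $t$-exact from an open subset, $\pi_X^{\dagger}$ is $t$-exact, and $(\pi^{\vee}_{\overline X}\circ\phi_X)_*$ is pushforward along the affine map $Z\times\bbP^{\vee}\to\bbP^{\vee}$. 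You instead apply $\pi^{\vee}_*$ to the excision triangle $j_!j^!\to\mathrm{id}\to i_*i^*$, compute the middle term as $a_{\bbP^{\vee}}^*R\Gamma(Z,\KK)[N]\in{}^pD^{\leq0}$ and (after smooth base change along the $\bbP^{N-1}$-bundle $p\colon Q\to\bbP$) the third term as $\rho_*\hat\psi^{\dagger}\KK[1]\in{}^pD^{\leq-1}$, and then read off the bound from the long exact sequence; your argument is correct, and the affineness of $\rho\colon Q\times_{\bbP}Z\to\bbP^{\vee}$ is exactly where affineness of $Z$ enters, paralleling how the paper uses it. The trade-off: the paper's "commute $j_!$ past $\phi_*$" formulation is what feeds into the inductive argument across all codimensions $d$ and the flag bundle version (Proposition \ref{flag_t_exactness}), and works uniformly over a base $S$; your excision estimate is a clean, self-contained proof for the $d=N-1$ case and highlights the shift by $1$ coming from the codimension-one closed stratum, but would require more bookkeeping to iterate through the flag.
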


The proof of proposition \ref{Cor_R_!_Intro} is via a `double' Artin vanishing Lemma \ref{criterion_t_exactness_2}. We also refer the reader to Proposition \ref{SWL_converse} where it is shown that under the assumption of $Z$ affine, the functor $\cR_{N-1!} \circ \phi_*$  from $\cP(Z)$\footnote{For any scheme $X$, by $\cP(X)$ we mean the abelian category of perverse sheaves on $X$.} to $\cP(\bbP^{\vee})$ is in addition \textit{faithful}.

A consequence of Proposition \ref{Cor_R_!_Intro} is the strong weak Lefschetz Theorem from \cite{deCM10} due to Beilinson. 

\begin{thm}(Beilinson)\label{SWL_Intro}
Let $\phi: Z \to \bbP$ be a quasi-finite\footnote{In \cite{deCM10} this is stated for $\phi$ an affine open immersion.} and affine morphism. Let $\KK$ perverse. Then 
 \begin{enumerate}[(b)]
     \item the cohomology groups $\rH^{i}(Z,j_{1!}j_{1}^{!}j_{2*}j_{2}^*\KK)$ vanish for $i \neq 0$, here $j_{r} \colon V_r \hookrightarrow Z,r=1,2$ are inverse images (under $\phi$) of generic hyperplane section complements.
     \item If $Z$ is itself affine then, $\rH^i(Z,j_{!}j^!\KK)=0$ for $i \neq 0$. Here $j \colon V \hookrightarrow Z$ is the inverse image (under $\phi$) of a generic hyperplane section complement.
 \end{enumerate}
   
\end{thm}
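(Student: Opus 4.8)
The plan is to deduce Theorem \ref{SWL_Intro} from Proposition \ref{Cor_R_!_Intro} by unwinding the definition of the functor $\cR_{N-1!}$ and translating the $t$-exactness statement into the claimed cohomological vanishing. First I would record the basic geometric fact that for a generic hyperplane $H \subseteq \bbP$ with complement $U_H = \bbP \setminus H$, the open immersion $j_H \colon U_H \hookrightarrow \bbP$ fits into the fiber of $\pi$ over the corresponding point $h \in \bbP^\vee$: concretely, $i^{-1}\pi^{\vee,-1}(h) \cong H$ and $U = \pi^{\vee,-1}(\bbP^\vee) \setminus Q$ restricted over $h$ is $U_H$. Thus for a sheaf $\cG$ on $\bbP$, proper base change along the smooth morphism $\pi^\vee$ identifies the stalk $\bigl(\cR_{N-1!}\cG\bigr)_h$, up to the shift coming from $\pi^\dagger = \pi^*[N-1]$, with $\mathrm{R}\Gamma\bigl(\bbP, j_{H!}j_H^!\cG\bigr)$ (here I use that $\pi$ restricted to the fiber is an isomorphism onto $\bbP$, and that $j_!j^!$ commutes with the relevant pullbacks). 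This is the standard ``Radon transform computes sections of $j_!j^!$'' identity and I would state it as a short lemma with a base-change proof.

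Next I would apply this with $\cG = \phi_*\KK$ for $\KK \in \cP(Z)$. Part (b): let $V_1, V_2$ be inverse images under $\phi$ of two generic hyperplane section complements $U_1, U_2 \subseteq \bbP$, with open immersions $j_r \colon V_r \hookrightarrow Z$ and $k_r \colon U_r \hookrightarrow \bbP$. Since $\phi$ is quasi-finite (hence the relevant squares are Cartesian and $\phi$ is representable in the obvious sense), one has $\phi_* j_{1!}j_1^! j_{2*}j_2^* \KK \cong k_{1!}k_1^! k_{2*}k_2^* \phi_*\KK$ by base change, and then $\rH^i(Z, j_{1!}j_1^!j_{2*}j_2^*\KK) = \rH^i(\bbP, k_{1!}k_1^! (k_{2*}k_2^*\phi_*\KK))$. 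Now $k_{2*}k_2^*\phi_*\KK$: the morphism $Z \times_{\bbP} U_2 \to U_2$ is the restriction of $\phi$, which is affine by hypothesis and quasi-finite, and $U_2$ is affine (a hyperplane complement in $\bbP$), so $Z \times_\bbP U_2$ is affine; by Artin vanishing $\phi_*\KK$ restricted there — equivalently $k_2^*\phi_*\KK$ — lies in $\cpH^{\geq 0}$... but more to the point, I want to view the whole composite $k_{1!}k_1^!k_{2*}k_2^*\phi_*(-)$ as (a twist of) $\cR_{N-1!}\circ \phi_*'$ for the morphism $\phi' \colon Z\times_\bbP U_2 \to U_2 \hookrightarrow \bbP$, and invoke Proposition \ref{Cor_R_!_Intro}. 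The cleaner route: by the lemma, $\rH^\bullet(\bbP, k_{1!}k_1^!\cF)$ is a stalk of $\cR_{N-1!}\cF[-(N-1)]$ at the generic point corresponding to $U_1$; taking $\cF = k_{2*}k_2^*\phi_*\KK = \phi''_* (\KK|_{\phi^{-1}(U_2)})$ where $\phi''$ is the composite $\phi^{-1}(U_2) \hookrightarrow Z \xrightarrow{\phi} \bbP$ (which is quasi-finite, being a composite of an open immersion and a quasi-finite map), Proposition \ref{Cor_R_!_Intro} gives that $\cR_{N-1!}\circ\phi''_*$ is left $t$-exact, so $\cR_{N-1!}\cF \in \cpH^{\geq 0}$; since $\KK|_{\phi^{-1}(U_2)} \in \cP$ (perversity is local, or: restriction to an open is $t$-exact) this reads $\cR_{N-1!}\phi''_*(\KK|_{\phi^{-1}(U_2)}) \in \cpH^{\geq 0}$, hence its stalks (shifted by $N-1$) are concentrated in degrees $\geq 0$. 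For the vanishing in degrees $> 0$ I use the affineness of $\phi^{-1}(U_2)$ over $\bbP$ together with the dual Artin vanishing: $k_{2*}k_2^*\phi_*\KK$, being the pushforward of a perverse sheaf along an affine morphism, lies in $\cpH^{\leq 0}$; combined with $j_{1!}j_1^!$ being right $t$-exact and the affineness of $U_1$, a direct Artin vanishing argument (or the $t$-exactness clause of Proposition \ref{Cor_R_!_Intro} applied to an affine source) forces concentration in degree $\leq 0$ as well. Squeezing, $\rH^i = 0$ for $i \neq 0$.

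Part (a), the case $Z$ itself affine: here I take $\phi' \colon Z \hookrightarrow \bbP$ to be... no — $\phi$ is merely quasi-finite and affine, and $Z$ is affine, so $\phi$ itself already has affine source. Then $\cR_{N-1!}\circ\phi_*$ is \emph{$t$-exact} by the ``moreover'' clause of Proposition \ref{Cor_R_!_Intro}. For a generic hyperplane complement $U_H$ with inverse image $V = \phi^{-1}(U_H)$ and $j\colon V \hookrightarrow Z$, base change gives $\phi_* j_!j^!\KK \cong k_{H!}k_H^!\phi_*\KK$, and by the lemma $\rH^i(Z, j_!j^!\KK) = \rH^{i}(\bbP, k_{H!}k_H^!\phi_*\KK) = \cpH^{i - (N-1) + (N-1)}$... i.e. equals the stalk cohomology of $\cR_{N-1!}\phi_*\KK$ at $h$ in the appropriate degree; $t$-exactness of $\cR_{N-1!}\circ\phi_*$ gives $\cR_{N-1!}\phi_*\KK \in \cP(\bbP^\vee)$, and a perverse sheaf on $\bbP^\vee$ restricted to a generic (hence sufficiently general) point $h$ has stalk cohomology concentrated in a single degree — here degree $0$ after the shift — giving $\rH^i(Z, j_!j^!\KK) = 0$ for $i \neq 0$. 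The main obstacle I anticipate is bookkeeping the shifts and the precise ``generic'' conditions: one must check that the open dense locus of $\bG(N-1) = \bbP^\vee$ over which the stalk of a perverse sheaf is in a single cohomological degree can be taken compatibly with the genericity already imposed on the hyperplane sections $U_1, U_2$ (resp. $U_H$), and that the base-change isomorphisms genuinely hold at the level of complexes for the quasi-finite, not-necessarily-finite, morphism $\phi$ — this uses that such squares are Cartesian and that $j_!, j^!, j_*, j^*$ for open immersions commute with arbitrary base change, which is standard but worth spelling out. I would also need the elementary fact that for $\cF\in\cpH^{\geq 0}$ (resp. $=0$ outside degree $0$) on $\bbP^\vee$, the derived stalk at a general closed point is concentrated in degrees $\geq \dim$ (resp. exactly the middle), which is immediate from the support/cosupport conditions defining the perverse $t$-structure once one restricts to the open locus where $\cF$ is lisse.
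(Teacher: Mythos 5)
Your two arguments line up with the paper's to different degrees. For the case $Z$ itself affine (your ``Part (a)''), your argument is essentially identical to the paper's: this is exactly the mechanism of Corollary~\ref{Deligne_WL}, just upgraded from left $t$-exactness to full $t$-exactness via the moreover clause of Proposition~\ref{Cor_R_!_Intro}. (One small bookkeeping slip: since $\pi\colon\bbP\times\bbP^\vee\to\bbP$ has relative dimension $N$, the relevant shift is $\cR_{N-1!}\cF[-N]$, not $[-(N-1)]$; the paper states it this way in the proof of Corollary~\ref{Deligne_WL}, and this is what lands the vanishing at $i\neq 0$ rather than some other degree.)

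For the two-hyperplane statement (your ``Part (b)'') you take a genuinely different route from the paper. You observe that $\phi^{-1}(U_2)$ is itself affine (because $\phi$ is an affine morphism and a hyperplane complement $U_2$ is affine), that the composite $\phi''\colon\phi^{-1}(U_2)\hookrightarrow Z\to\bbP$ is still quasi-finite, and that $\phi''_*(\KK|_{\phi^{-1}(U_2)})=\phi_*j_{2*}j_2^*\KK$; so the moreover clause of Proposition~\ref{Cor_R_!_Intro} applies directly to $\phi''$, reducing you to the single-hyperplane case you just proved. This is a clean and correct reduction. The paper instead defines relative Radon transforms $\cR_{\bbP^\vee,!}$ and $\cR_{\bbP,*}$ over the bases $\bbP^\vee$ and $\bbP$, shows $\cR_{\bbP^\vee,!}\circ\cR_{\bbP,*}\circ\Delta_*\circ\phi_*$ is $t$-exact with values in $D^b_c(\bbP^\vee\times\bbP^\vee)$, and reads off the statement from the generic stalk on the \emph{product}. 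What the paper's route buys is exactly the thing you flag as your ``main obstacle'': doing everything in one go on $\bbP^\vee\times\bbP^\vee$ yields a single dense open over which both the lisse property and the base-change identities hold, so the genericity of the pair $(H_1,H_2)$ is built in. In your version the dense open for $H_1$ (lisse locus of $\cR_{N-1!}\phi''_*(\KK|_{V_2})$, plus the locus where $i_1^*\phi''_*\to(\phi''|)_*i_1'^{*}$ is an isomorphism) depends on $H_2$, and you would need a separate constructibility-in-the-pair argument to conclude density of a single open in $\bbP^\vee\times\bbP^\vee$ — that extra argument is precisely what the relative Radon transform packages for you. Two smaller cautions: your parenthetical ``$j_*$ for open immersions commutes with arbitrary base change'' is false as stated (only smooth base change applies; what you actually use is $k_2^*\phi_*=(\phi|)_*j_2^*$ by smooth base change, plus functoriality $k_{2*}(\phi|)_*=\phi_*j_{2*}$); and the identity $k_{1!}k_1^!\phi_*\cM\cong\phi_*j_{1!}j_1^!\cM$ is \emph{not} automatic for a non-proper quasi-finite $\phi$ — it requires generic base change along the closed embedding $H_1\hookrightarrow\bbP$, which is one of the genericity constraints feeding into the issue above.
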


Theorem \ref{SWL_Intro}, (b) follows from Proposition \ref{Cor_R_!_Intro}. While Theorem \ref{SWL_Intro}, (a) is obtained by using a relative version of Proposition \ref{Cor_R_!_Intro} and its dual. See \S \ref{proof_SWL} for details.

\subsection{Applications to cohomological divisibility and Hodge level}Proposition \ref{Cor_R_!_Intro} also implies Deligne's weak Lefschetz theorem \cite[Corollary A.5]{Kat93} under the weaker hypothesis of upper semi-perversity (see Corollary \ref{Deligne_WL}).
This allowed us to obtain a weak Lefschetz for the Gysin map (see Proposition \ref{affine_Gysin}), thus answering a question of Esnault-Wan \cite{EW22}. Then following a strategy by Esnault-Wan \cite{EW22}, we prove the following strengthening of Deligne's integrality theorem and Esnault-Katz's Cohomological divisibility theorem \cite[Theorem 2.3, (2)]{EK05}. 

\subsubsection{Statement on cohomological divisibility}Let $\bbF_{q}$ be a finite field with $q$ elements. Let $Z_0 \subset \bbA^N_{\bbF_{q}}$ be an affine scheme defined by the vanishing of $r$ polynomials of degree $d_1,d_2 \cdots, d_r$ with $N,r \geq 1$. We denote by $Z$ (resp. $\bbA^N$), the base change of $Z_0$ (resp. $\bbA^N_{\bbF_q}$) to an algebraic closure (say $\bar{\bbF}_q$) of $\bbF_q$. 

\begin{thm}\label{CD_Intro}
    The eigenvalues of the geometric Frobenius acting on 
\begin{enumerate}[(a)]
    \item $\rH_c^{\textup{dim}(Z)+j}(Z)$, for $j\geq 0$ and,
    \item $\rH_c^{\textup{dim}(Z)+j+1}(\bbA^N\backslash Z)$ for $0 \leq j \leq \textup{dim}(Z)+1$
\end{enumerate}
    \noindent are divisible by $q^{\mu_j(N;d_1,d_2 \cdots,d_r)}$ (see Equation (\ref{BCH_1})).
\end{thm}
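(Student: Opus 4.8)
The plan is to follow the strategy of Esnault--Wan \cite{EW22}; the input that was missing there --- a weak Lefschetz theorem for the Gysin map --- is now available as Proposition \ref{affine_Gysin}, whose sharp form (like that of Beilinson's strong weak Lefschetz Theorem \ref{SWL_Intro}) rests on the double Artin vanishing Lemma \ref{criterion_t_exactness_2}. First I would reduce (a) and (b) to each other. Let $i\colon Z\hookrightarrow\bbA^N$ be the closed immersion and $\jmath\colon\bbA^N\setminus Z\hookrightarrow\bbA^N$ its open complement; applying $\rH^\bullet_c(\bbA^N,-)$ to the excision triangle $\jmath_!\Lambda\to\Lambda\to i_*\Lambda\xrightarrow{+1}$ and using that $\rH^\bullet_c(\bbA^N,\Lambda)$ is $\Lambda(-N)$ concentrated in degree $2N$ while $\rH^m_c(Z,\Lambda)=0$ for $m\ge 2N-1$ (since $\dim Z\le N-1$), one gets isomorphisms $\rH^m_c(Z,\Lambda)\xrightarrow{\sim}\rH^{m+1}_c(\bbA^N\setminus Z,\Lambda)$ for $m\le 2N-2$ together with $\rH^{2N}_c(\bbA^N\setminus Z,\Lambda)\cong\Lambda(-N)$. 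Since $\mu_j$ in \eqref{BCH_1} is set up so that the two indexings agree (the extremal value $j=\dim Z+1$ picking out the class $\Lambda(-N)$, divisible by $q^N$ for free), (a) and (b) are equivalent, and I would then prove, say, (a).

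Next I would argue by induction on $N$, with $r$ and $d_1,\dots,d_r$ fixed. The cases $\dim Z=0$ (a bare point count) and $\mu_j(N;d_1,\dots,d_r)=0$ (in particular $N\le\sum_\ell d_\ell$) are trivial, and $\rH^{2\dim Z}_c(Z,\Lambda)$, a sum of copies of $\Lambda(-\dim Z)$, is handled by purity. For the inductive step choose a generic hyperplane $H\subset\bbA^N$ (equivalently, a generic linear form, viewing $\bbA^N\setminus Z$ as fibred over $\bbA^1$); then $Y:=Z\cap H\subset\bbA^{N-1}$ is again cut out by $r$ polynomials of degrees $d_1,\dots,d_r$ (no drop in degree, by semicontinuity) with $\dim Y=\dim Z-1$. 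The strong weak Lefschetz theorem, in the form of Proposition \ref{affine_Gysin}, then expresses the relevant part of $\rH^{\dim Z+j}_c(Z,\Lambda)$ in terms of $\rH^{\dim Y+(j-1)}_c(Y,\Lambda)$ Tate-twisted by $(-1)$ for $j\ge 1$ (via the Gysin map for $Y\hookrightarrow Z$), the bottom degree $j=0$ --- the Ax--Katz range --- being dealt with through the full deviation controlled by Theorem \ref{SWL_Intro}, equivalently via a linear section of complementary dimension, which supplies the required twists. Since $q^\mu$-divisibility is inherited by subquotients and extensions and is raised by one under a Tate twist $(-1)$, the inductive divisibility of the cohomology of $Y$ together with the twist gives divisibility of $\rH^{\dim Z+j}_c(Z,\Lambda)$ by $q^{\mu_j(N;d_1,\dots,d_r)}$, provided \eqref{BCH_1} has been arranged so that the increments $\mu_j(N;\cdot)=\mu_{j-1}(N-1;\cdot)+1$ (and the companion inequalities governing the remaining terms) hold --- a direct, if fiddly, computation with the ceiling functions in \eqref{BCH_1}.

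The geometric heart of the argument --- surjectivity of the Gysin map and, more generally, the concentration of the hyperplane-complement cohomology, valid even though $Z$ is singular and not a complete intersection --- is precisely Proposition \ref{affine_Gysin} (here Corollary \ref{Deligne_WL} is what one needs, since for singular $Z$ the shifted constant sheaf $\Lambda_Z[\dim Z]$ is only upper semi-perverse, not perverse), so granting it the rest is a dévissage. I expect the main obstacle to be twofold: the numerical bookkeeping needed to see that the Tate twists accumulated along the induction match the exponent in \eqref{BCH_1} exactly rather than with loss, and the organization of the bottom range $j=0$ together with the non-generic and lower-dimensional strata (handled by semicontinuity and an easy separate treatment of the components of $Z$ of dimension $<\dim Z$). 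Everything else reduces to a now-standard induction on the ambient dimension.
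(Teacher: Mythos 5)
Your overall strategy is the paper's: reduce (a) to (b) via the excision triangle for $(Z,\bbA^N,\bbA^N\setminus Z)$ and $R\Gamma_c(\bbA^N,\bbQ_\ell)\simeq\bbQ_\ell(-N)$; then induct (the paper phrases it as induction on $\dim Z$, but after a hyperplane cut both $N$ and $\dim Z$ drop so this is the same induction as yours on $N$); use Proposition \ref{affine_Gysin2} to produce, for a generic hyperplane $\Lambda$, the Frobenius-equivariant surjection
\[
\rH^{\dim Z+j-1}_c\bigl(\Lambda\setminus(\Lambda\cap Z)\bigr)(-1)\twoheadrightarrow\rH^{\dim Z+j+1}_c(\bbA^N\setminus Z)
\]
valid for $j>0$, with $\Lambda\cap Z\subset\Lambda\cong\bbA^{N-1}$ still cut out by $r$ equations of degrees $d_1,\dots,d_r$; and close the induction using the exact identity $\mu_{j-1}(N-1;d_1,\dots,d_r)+1=\mu_j(N;d_1,\dots,d_r)$, which follows immediately (not ``fiddly'') from \eqref{BCH_1}. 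All of this matches.

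The genuine gap is the $j=0$ base of the divisibility induction. The Gysin surjection of Proposition \ref{affine_Gysin2} only holds for $r-1>\dim Z$, i.e.\ $j>0$, so it cannot reach $j=0$; and your suggestion that Theorem \ref{SWL_Intro} or a ``linear section of complementary dimension'' supplies the $j=0$ case is not right --- those results produce vanishing/surjectivity, not a lower bound on $q$-divisibility in degree $2\dim Z+1$. The paper handles $j=0$ by citing the Esnault--Katz bound \cite[Theorem 2.3, (1)]{EK05} (the cohomological Ax--Katz input) as an external ingredient; that result is genuinely independent of the Lefschetz machinery here, and without it your induction has no base for the exponent. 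A smaller point: $\Lambda_Z[\dim Z]$ is semi-perverse (lies in ${}^pD^{\leq 0}$), not ``upper semi-perverse''; the relevant statement is Lemma \ref{affine_Gysin} (applicable to semi-perverse $\KK$) rather than Corollary \ref{Deligne_WL} directly, though the two are dual. Also, there is no need to treat lower-dimensional components of $Z$ separately --- working with $\bbQ_\ell[\dim Z]$ as a semi-perverse sheaf handles all components at once.
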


We also have an analogue of Theorem \ref{CD_Intro} about Hodge levels. We refer the reader to Theorem \ref{HL} for the precise statement and to \S \ref{CD_Proof} for the proof of Theorem \ref{CD_Intro}.

\subsection{$\textup{P}=\textup{Dec(F)}$ as functors}
In the final \S \ref{P_Dec_F_Functors} we upgrade Theorem \ref{thm:affine-radon1} into an equivalence of functors. The results in the section are motivated by those in \cite[\S 5]{BMS19}. 

We make use of both geometric input (via Proposition \ref{flag_t_exactness}) and results from stable $\infty$-categories via Theorem \ref{Beilinson_t_structure} and Corollary \ref{t_strtucture_DF_cons}. The interested reader may refer to \S \ref{first_attempt} for a possible explanation as to why we needed to make a detour out of $1$-categories. 

We briefly describe the setup here. Let $S/k$ be a base scheme and let $\sE$ be a locally free sheaf of rank $N+1$ on $S$. We denote by $\bbP$ and $\textup{Fl}$ the associated projective bundle and flag bundle. Let $\overline{\pi}$ (resp.~$\overline{\pi}^{\vee}$) be the projection maps from $\bbP \times_S \textup{Fl}$ to $\bbP$ (resp. $\textup{Fl}$).

\subsubsection{The functor $\textup{F}$}
\begin{enumerate}[(a)]
    \item On $\bbP \times_S \textup{Fl}$ we have the \textit{universal} flag $\overline{Q}_{0}:=\bbP \times_S \textup{Fl} \supset \overline{Q}_{-1} \supset \overline{Q}_{-2} \cdots \overline{Q}_{-N} \supset \overline{Q}_{-N-1}=\emptyset$. 
    \item  For $-1 \geq r \geq -N-1$, we let $\overline{U}_{r}$ the complement of $\overline{Q}_{r}$, and denote by $\bar{l}_{r}:\overline{U}_{r} \hookrightarrow \bbP \times_S \textup{Fl}$, the corresponding open immersion. Thus we have adjunction maps $\overline{l}_{r!}\overline{l}_{r}^{!} \to \overline{l}_{r-1!}\overline{l}^{!}_{r-1}$.
\end{enumerate}

This allows us to define a functor $\textup{F} \colon \cD_{\textup{cons}}(\bbP) \to \cD F_{\textup{cons}}(\textup{Fl})$ as follows

\[
\KK \mapsto \textup{F}(\KK)^{\bullet}:=\{\cdots \to \textup{F}^{i}\KK \to \textup{F}^{i-1}\KK \to \textup{F}^{i-2}\KK \cdots\},
\]
\noindent where 

\begin{center}
   \begin{equation}\label{filt_defn_intro}
       \textup{F}^{r}\KK=\left \{
	\begin{array}{ll}
		\overline{\pi}^{\vee}_{*}\overline{\pi}^{\dagger}\KK   & \mbox{if } r \leq -N-1=-\textup{rk}(\sE) \\
		\overline{\pi}^{\vee}_{*}\overline{l}_{r-1!}\overline{l}_{r-1}^!\overline{\pi}^{\dagger}\KK & \mbox{if } -N \leq r \leq 0 \\
        0 & \mbox{if } r>0
	\end{array}.
\right.
\end{equation}
\end{center}
Here $\cD_{\textup{cons}}(\bbP)$ and $\cD F_{\textup{cons}}(\textup{Fl})$ are symmetric monoidal stable $\infty$-categories. The homotopy category of $\cD_{\textup{cons}}(\bbP)$ is $D^b_c(\bbP)$, and the objects in $\cD F_{\textup{cons}}(\textup{Fl})$ can be thought of as constructible complexes with a finite filtration. For a precise definition see \S \ref{cDF_cons}.

\subsubsection{The functor $\textup{P}$}
We also define a functor $\textup{P} \colon \cD_{\textup{cons}}(\bbP) \to \cD F_{\textup{cons}}(\textup{Fl})$ as follows

\[
\KK \mapsto \textup{P}(\KK)^{\bullet}:=\{\cdots \to \textup{P}^{i}\KK \to \textup{P}^{i-1}\KK \to \textup{P}^{i-2}\KK \cdots\},
\]
\noindent where 
\[
\textup{P}^{i}\KK:=\overline{\pi}^{\vee}_{*}\overline{\pi}^{\dagger}{}^p\tau_{\leq -i}\KK.
\]

\subsubsection{Beilinson $t$-structure on $\cD F_{\textup{cons}}(\textup{Fl})$ and the functor $\textup{Dec}$}
In the appendix, we equip $\cD F_{\textup{cons}}(\textup{Fl})$ with the following additional structures.
\begin{enumerate}[(a)]
    \item A Beilinson $t$-structure (see Theorem \ref{Beilinson_t_structure} and Proposition \ref{t_strtucture_DF_cons}).
    \item A functor $\textup{Dec}$ (see Definition \ref{decalage} and Proposition \ref{t_strtucture_DF_cons}), which is to mimic Deligne's  décalée \cite[1.3.3]{Del71}.
\end{enumerate}

\subsubsection{Statement of the Theorem}We can now state our result. For a proof see \S \ref{P_Dec_F_Functors}.
\begin{thm}\label{P_Dec_F_Functor_intro}
Let $\phi \colon Z \to \bbP$ be a quasi-finite morphism with $Z/S$ affine. Then 
\begin{enumerate}[(a)]
    \item $\textup{F}\circ \phi_*$ is $t$-exact, for the perverse $t$-structure on $\cD_{\textup{cons}}(\bbP)$ and Beilinson $t$-structure on $\cD F_{\textup{cons}}(\textup{Fl})$.
    \item There is a natural equivalence of functors
  \[
  \mathrm{P} \circ \phi_* \simeq \mathrm{Dec}(\mathrm{F}\circ\phi_*).
  \]
  \item There is a functorial (in $\KK$) isomorphism of spectral sequences with values in $\cP(\textup{Fl})$ between the constant perverse spectral sequence 

  \[
   E_1^{i,j}=\cpH^{i+j}(\overline{\pi}^{\vee}_{*}\overline{\pi}^{\dagger}\cpH^{-j}(\phi_*\KK)[j]) \implies \cpH^{i+j}(\overline{\pi}^{\vee}_{*}\overline{\pi}^{\dagger}\phi_*\KK)
   \]
   \noindent and the  d\'ecal\'ee of the universal flag filtration spectral sequence
   \[
   E_1^{i,j}=\cpH^{i+j}(\textup{Gr}^{-j}_{\textup{F}}\phi_*\KK[j]) \implies \cpH^{i+j}(\overline{\pi}^{\vee}_{*}\overline{\pi}^{\dagger}\phi_*\KK).
   \]
   
\end{enumerate}
  
\end{thm}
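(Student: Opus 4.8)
The plan is to deduce all three parts of Theorem \ref{P_Dec_F_Functor_intro} from the key $t$-exactness input (Proposition \ref{flag_t_exactness}, which for the flag bundle plays the role of Proposition \ref{Grass_Rad_Corollary}) together with the formal machinery of the Beilinson $t$-structure and the functor $\textup{Dec}$ developed in the appendix. First I would establish part (a). By definition $\textup{F}(\phi_*\KK)^{\bullet}$ is the filtered object whose $r$-th term is $\overline{\pi}^{\vee}_{*}\overline{l}_{r-1!}\overline{l}^{!}_{r-1}\overline{\pi}^{\dagger}\phi_*\KK$ (truncated appropriately at the ends). For $\KK$ perverse on $Z$, $\phi_*\KK$ is perverse on $\bbP$ since $\phi$ is quasi-finite and affine (here I use that $Z/S$ affine and $\phi$ quasi-finite force $\phi$ affine). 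The graded pieces of $\textup{F}(\phi_*\KK)^{\bullet}$ are, up to shift, the terms $\overline{\pi}^{\vee}_{*}(\overline{l}_{r}$-to-$\overline{l}_{r-1}$ cone$)\overline{\pi}^{\dagger}\phi_*\KK$, and I would show each such graded piece lies in perverse degree $r$ (equivalently, $\textup{Gr}^{r}_{\textup{F}}(\phi_*\KK)[-r]$ is perverse on $\textup{Fl}$) using the affine-case $t$-exactness of the relevant Brylinski–Radon type functor from Proposition \ref{flag_t_exactness}. Granting that, the characterization of the Beilinson $t$-structure heart on $\cD F_{\textup{cons}}(\textup{Fl})$ (Theorem \ref{Beilinson_t_structure}, Proposition \ref{t_strtucture_DF_cons}) — namely that a filtered object is in the Beilinson heart iff its $r$-th graded piece sits in perverse degree $r$ — gives that $\textup{F}(\phi_*\KK)$ lies in the Beilinson heart. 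Applying this to $\KK$ replaced by $\cpH^{m}(\KK)[-m]$ for all $m$ and using that $\textup{F}\circ\phi_*$ commutes with perverse truncation on the source (again via the quasi-finite affine hypothesis and Proposition \ref{flag_t_exactness} applied to truncations), one upgrades this to full $t$-exactness: $\textup{F}(\phi_*{}^p\tau_{\leq n}\KK)$ and $\textup{F}(\phi_*{}^p\tau_{\geq n}\KK)$ compute the Beilinson truncations of $\textup{F}(\phi_*\KK)$.

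Next, part (b). The content is that for any $\KK$, the filtered object $\textup{P}(\phi_*\KK)^{\bullet}$, whose $i$-th term is $\overline{\pi}^{\vee}_{*}\overline{\pi}^{\dagger}{}^p\tau_{\leq -i}\KK$, agrees with $\textup{Dec}(\textup{F}(\phi_*\KK))$. I would argue this by the universal property / explicit description of $\textup{Dec}$ from Definition \ref{decalage}: $\textup{Dec}$ of a filtered object $A^{\bullet}$ is characterized (in the appendix) via the Beilinson truncation functors, and on a filtered object lying in "nice" position its $i$-th term recovers $\tau^{B}_{\leq \text{(something)}}$. The key point is that by part (a), $\textup{F}\circ\phi_*$ is $t$-exact, so $\textup{F}(\phi_*({}^p\tau_{\leq -i}\KK))$ is the Beilinson truncation $\tau^{B}_{\leq\text{appropriate}}\textup{F}(\phi_*\KK)$; applying $\overline{\pi}^{\vee}_{*}\overline{\pi}^{\dagger}$ to the top of the filtration (the part indexed $r\leq -N-1$, which is just $\overline{\pi}^{\vee}_{*}\overline{\pi}^{\dagger}$ with no support condition) and comparing with the definition of $\textup{Dec}$, one reads off the equivalence $\textup{P}^{i}(\phi_*\KK)\simeq \textup{Dec}(\textup{F}(\phi_*\KK))^{i}$ compatibly with the transition maps. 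Concretely: $\textup{Dec}$ is built so that $\textup{Dec}(A)^i = \text{colim/fib}$ of Beilinson truncations of $A$, and $t$-exactness of $\textup{F}\circ\phi_*$ interchanges $^p\tau$ on $\cD_{\textup{cons}}(\bbP)$ with $\tau^B$ on $\cD F_{\textup{cons}}(\textup{Fl})$, so $\textup{P}(\phi_*\KK) = \overline{\pi}^{\vee}_{*}\overline{\pi}^{\dagger}\circ {}^p\tau_{\leq\bullet}(\phi_*\KK)$ becomes, after identifying $\overline{\pi}^{\vee}_{*}\overline{\pi}^{\dagger}(\phi_*\KK)$ with the top term of $\textup{F}(\phi_*\KK)$, exactly $\textup{Dec}$ applied to the whole filtered object. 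Naturality in $\KK$ is automatic since every functor in sight ($\phi_*$, $\overline{\pi}^{\dagger}$, $\overline{\pi}^{\vee}_{*}$, $\overline{l}_{r!}\overline{l}_r^{!}$, $^p\tau$, $\tau^B$, $\textup{Dec}$) is a functor of $\infty$-categories and the comparison maps are built from unit/counit transformations.

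Finally, part (c) is the "shadow on $E_1$-pages" of part (b). A $t$-exact functor between stable $\infty$-categories with $t$-structures sends the Postnikov/filtration data of an object to that of its image; here applying the exact functor "take associated spectral sequence of a filtered object in the Beilinson heart setting" to the equivalence of part (b) yields an isomorphism of the two spectral sequences. More precisely: the universal flag filtration spectral sequence is the spectral sequence of the filtered object $\textup{F}(\phi_*\KK)$ with respect to the perverse $t$-structure on $\cD_{\textup{cons}}(\textup{Fl})$, whose $E_1$-term is $\cpH^{i+j}(\textup{Gr}^{-j}_{\textup{F}}\phi_*\KK[j])$; passing to its décalée corresponds exactly to the reindexing $\textup{Dec}$, and by (b) this is the spectral sequence of $\textup{P}(\phi_*\KK)$, i.e. the perverse Leray-type spectral sequence with $E_1^{i,j}=\cpH^{i+j}(\overline{\pi}^{\vee}_{*}\overline{\pi}^{\dagger}\cpH^{-j}(\phi_*\KK)[j])$. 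The compatibility of "taking spectral sequence" with $\textup{Dec}$ is precisely the standard décalage lemma (\cite[1.3.3]{Del71}) transported to this setting, which should already be recorded in the appendix; so (c) follows formally from (b) plus that compatibility, with functoriality in $\KK$ inherited as in (b).

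I expect the main obstacle to be part (a), and within it the precise bookkeeping that $\textup{F}\circ\phi_*$ not only lands in the Beilinson heart on perverse objects but is genuinely $t$-exact — i.e. that it commutes with truncation. This requires knowing that each "one-step" cone in the universal flag, after pushforward, is a Brylinski–Radon transform of the shape controlled by Proposition \ref{flag_t_exactness}, and that the affine hypothesis on $Z/S$ survives base change along the flag bundle (so that Artin vanishing and its "double" variant apply on each stratum uniformly over $S$). The secondary subtlety is matching conventions: making sure the index shifts in the definition \eqref{filt_defn_intro} of $\textup{F}^{r}$, the shift $[d]$ in $\overline{\pi}^{\dagger}$, and the normalization of $\textup{Dec}$ in Definition \ref{decalage} are mutually consistent so that the graded pieces land in \emph{exactly} perverse degree $r$ (not $r\pm\text{const}$) — any off-by-one here would break the clean statement of (b). Once (a) is in hand with the right normalization, (b) and (c) are formal consequences of the appendix's $\infty$-categorical $t$-structure formalism.
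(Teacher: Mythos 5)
Your proposal matches the paper's own argument: part (a) is exactly Proposition \ref{prop_revist}, obtained by reformulating Proposition \ref{flag_t_exactness} through the graded-piece characterization of the Beilinson $t$-structure (Lemma \ref{Beilinson_t_structure_Gr_i} and Proposition \ref{t_strtucture_DF_cons}); part (b) is the interchange of ${}^p\tau_{\leq -r}$ with ${}^B\tau_{\leq -r}$ (Equation (\ref{prop_revisit_1})) followed by unwinding Definition \ref{decalage} and identifying $(-\infty)$ with $\overline{\pi}^{\vee}_*\overline{\pi}^{\dagger}\phi_*$; and part (c) is the formal passage to spectral sequences of filtered objects via Corollary \ref{Beilinson_t_structure_Gr_i_Cor} and \cite[Proposition 1.2.2.14]{HA}. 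One slip to correct: for $\KK$ perverse the statement that $\textup{Gr}^r_{\textup{F}}(\phi_*\KK)$ ``lies in perverse degree $r$'' means $\textup{Gr}^r_{\textup{F}}(\phi_*\KK)[r]$ (not $[-r]$) is perverse, consistent with Proposition \ref{flag_t_exactness}.
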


When $S=\Spec(k)$, by localizing at a generic point of $\textup{Fl}$ we recover the result in \cite[Theorem 4.1.1]{deCM10}.

{\bf Acknowledgements:}

This work owes an overwhelming intellectual debt to Beilinson's article \cite{Bei87}. Though we do not use the results from \cite{B} in this article, its masterful use of the Radon transform was very insightful for us. KVS would like to thank Charanya Ravi and Donu Arapura for their helpful discussions. 

\newpage 
\subsection{Leitfaden}

\begin{center}
\tiny \begin{tikzpicture}[
        > = latex',
        %
        every node/.style={
            draw,
            minimum width=1cm,
            minimum height=0.8cm,
        },
    ]
        \node (S1) {\begin{tabular}{c}
        ``double'' Artin vanishing \\
     Lemma \ref{criterion_t_exactness_2} \\
    \end{tabular} };
        \node[below=of S1] (S2) {\begin{tabular}{c}
        $\textup{Gr}^{r}_{\textup{F}}\circ\phi_*[r]$:$t$-exact\\
     Proposition \ref{flag_t_exactness}  \\
    \end{tabular} };
        \node[below=of S2] (S3) {
        \begin{tabular}{c}
        $\cR_{d} \circ \phi_{*}$: right $t$-exact \\
        $\cR_{d!} \circ \phi_{*}[d-N+1]$: left $t$-exact\\
     Proposition \ref{Grass_Rad_Corollary} \\
    \end{tabular} 
             };
        \node[below=of S3] (S4) {\begin{tabular}{c}
             P=\textup{Dec}(F) on cohomology\\
             \S \ref{P_Dec(F)_Coho}
        \end{tabular}};
        \node[below right=of S1] (S5) {\begin{tabular}{c}
             (left) $t$-exactness \\
             of $\cR_{N-1!}\circ \phi_*$\\
             Proposition \ref{SWL_Rad}
        \end{tabular}};
        \node[below=of S5] (S11) {\begin{tabular}{c}
             Beilinson's \\
             basic lemma\\
             Theorem \ref{SWL_Intro}
        \end{tabular}};
        \node[above right=of S5] (S6) {\begin{tabular}{c}
             Deligne's weak Lefschetz\\
             for upper semi-perverse sheaves\\
             Corollary \ref{Deligne_WL}
        \end{tabular}};
        \node[below=of S6] (S7) {\begin{tabular}{c}
       Weak Lefschetz for Gysin \\
        Proposition \ref{affine_Gysin2}
        \end{tabular}};
        \node[below=of S7] (S8) {\begin{tabular}{c}
             Sharpening of\\
             Esnault-Katz bound\\
             Theorem \ref{CD_Intro}
        \end{tabular}};
        
        \node[left=of S1] (S12) {\begin{tabular}{c}
            Beilinson $t$-structure\\
            on $\cD F_{\textup{cons}}$\\
            Theorem \ref{Beilinson_t_structure} \\
            Proposition \ref{t_strtucture_DF_cons}
        \end{tabular}};
        \node[below=of S12] (S14) {\begin{tabular}{c}
            $\textup{F}$ is 
            $t$-exact\\
           Proposition \ref{prop_revist}
        \end{tabular}};
         \node[below=of S14] (S10) {\begin{tabular}{c}
            P=\textup{Dec}(F) as\\
            functors\\
            Theorem \ref{P_Dec_F_Functor}
        \end{tabular}};
        \node[below=of S10] (S13) {\begin{tabular}{c}
            Equality of \\
            spectral sequences\\
            in $\cP(\textup{Fl})$\\
            Corollary \ref{iso_spectral_sequence}
        \end{tabular}};

        \draw[->] 
              (S1) edge (S2)
              (S14) edge (S10)
              (S12) edge (S14)
              (S2) edge (S14)
              (S10) edge (S13)
              (S13) edge (S4)
              (S5) edge (S11)
              (S1) edge (S5)
              (S5) edge (S6)
              (S6) edge (S7)
              (S7) edge (S8)
              (S2) edge (S3)
              (S3) edge (S4);
    \end{tikzpicture}
\end{center}

\section{Preliminary results}

In this section, we collect the preliminary results we need to carry out the proof of the main theorems of this article. We begin with a summary of the results needed from the theory of perverse sheaves. 

\subsection{Summary of results needed about perverse sheaves}

As mentioned in the introduction we work exclusively with middle perversity. We shall make repeated use of Artin vanishing in the following form. 

\begin{thm}\cite[Theorem 4.1.1]{BBDG18}\label{Artin_Vanishing}
Let $f: X \to Y$ be an affine morphism. Then the functor $Rf_*: D^b_c(X) \to D^b_c(Y)$\footnote{We work with triangulated versions of the sheaf operations, and henceforth shall denote them without the usual decorations of ‘R or ‘L’.} (resp. $f_!: D^b_c(X) \to D^b_c(Y)$)  is right $t$-exact (resp. left $t$-exact).
\end{thm}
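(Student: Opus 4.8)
The plan is to deduce this from the classical affine vanishing theorem of Artin and Grothendieck, via a dimension estimate on the ordinary cohomology sheaves of $f_*$.

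First I would note that the two assertions are interchanged by Verdier duality: since $D_Y\circ f_!\simeq f_*\circ D_X$ and $D$ swaps ${}^pD^{\leq 0}$ with ${}^pD^{\geq 0}$, it is enough to show that $f_*$ is right $t$-exact, i.e. $f_*K\in{}^pD^{\leq 0}(Y)$ for $K\in{}^pD^{\leq 0}(X)$. Recall that $L\in{}^pD^{\leq 0}$ precisely when $\dim\supp\mathcal{H}^m(L)\leq -m$ for all $m$. Using the ordinary truncation triangles $\mathcal{H}^m(K)[-m]\to\tau^{\geq m}K\to\tau^{\geq m+1}K\xrightarrow{[1]}$ and the stability of ${}^pD^{\leq 0}$ under extensions, I would reduce to $K=\mathcal{G}[-m]$ with $\mathcal{G}$ a constructible sheaf set-theoretically supported on a closed subvariety $i\colon W\hookrightarrow X$ with $\dim W\leq -m$; writing $\mathcal{G}=i_*\mathcal{G}'$ and $f\circ i=:g\colon W\to Y$ (still affine), everything comes down to the estimate $\dim\supp R^qg_*\mathcal{G}'\leq\dim W-q$ for all $q$, since then $\dim\supp\mathcal{H}^k(f_*\mathcal{G}[-m])=\dim\supp R^{k+m}g_*\mathcal{G}'\leq\dim W-k-m\leq -k$.

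To prove that estimate I would fix $y\in\supp R^qg_*\mathcal{G}'$ and a geometric point $\bar y$ over it. By the compatibility of $g_*$ with the base change $\Spec\mathcal{O}_{Y,\bar y}^{sh}\to Y$, the stalk $(R^qg_*\mathcal{G}')_{\bar y}$ is computed by $R^q\tilde g_*\tilde{\mathcal{G}}$ for the (affine, finite type) base change $\tilde g$ of $g$ to $\Spec\mathcal{O}_{Y,\bar y}^{sh}$. The Artin--Grothendieck affine vanishing theorem gives $R^q\tilde g_*\tilde{\mathcal{G}}=0$ once $q$ exceeds the supremum of the fibre dimensions of $\tilde g$; these fibre dimensions are those of the fibres $W_{y'}$ of $g$ over generizations $y'$ of $y$, and for each such $y'$ one has $\dim W_{y'}\leq\dim g^{-1}(\overline{\{y'\}})-\dim\overline{\{y'\}}\leq\dim W-\dim\overline{\{y'\}}\leq\dim W-\dim\overline{\{y\}}$. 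Hence $(R^qg_*\mathcal{G}')_{\bar y}=0$ as soon as $q>\dim W-\dim\overline{\{y\}}$, so membership of $y$ in the support forces $\dim\overline{\{y\}}\leq\dim W-q$, which is the claimed bound.

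The hard part is really this last estimate, namely importing and correctly localizing the classical affine Lefschetz/Artin--Grothendieck vanishing; the duality and dévissage steps are purely formal. For $\Lambda$ a finite ring the vanishing input is Artin's theorem; for $\Lambda$ an algebraic extension of $\bbQ_\ell$ one reduces to that case by the standard passage to the limit over finite coefficients, which all the operations above respect. Assembling these pieces yields the statement, which is \cite[Theorem 4.1.1]{BBDG18}.
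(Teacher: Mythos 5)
The paper does not prove this theorem; it is cited directly from \cite[Théorème~4.1.1]{BBDG18}, so there is no in-text argument for your proposal to be compared against. Your outline does follow the standard proof of that theorem: Verdier duality to pass between the two halves, dévissage via ordinary truncation to a single constructible sheaf $\mathcal{G}$ concentrated in one degree, and the key dimension estimate $\dim\supp R^q g_*\mathcal{G}' \leq \dim W - q$ for affine $g$, proved by localizing at a strictly henselian point and invoking Artin--Grothendieck affine vanishing together with the fibre-dimension bound $\dim W_{y'} \leq \dim W - \dim\overline{\{y'\}}$. This is the right idea and essentially the argument in BBDG.

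One concrete slip: with the convention $\mathcal{H}^i(C[n]) = \mathcal{H}^{i+n}(C)$ (the one under which your perversity condition is stated and under which $\mathcal{G}[-m]$ sits in degree $m$), you have $\mathcal{H}^k\bigl(f_*(\mathcal{G}[-m])\bigr) = R^{\,k-m}g_*\mathcal{G}'$, not $R^{\,k+m}g_*\mathcal{G}'$. Your final inequality chain should therefore read $\dim\supp R^{\,k-m}g_*\mathcal{G}' \leq \dim W - (k-m) = \dim W + m - k \leq -k$, where the last step uses $\dim W \leq -m$ exactly as you set up; as written, $\dim W - k - m \leq -k$ would require $\dim W \leq m$, which is the opposite of your hypothesis. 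The sign error is purely notational and the corrected chain closes. One further point worth making explicit: the form of Artin vanishing you invoke over the strictly henselian base (``vanishing above the sup of fibre dimensions'') is slightly stronger than the most commonly quoted statement (vanishing above the Krull dimension of the total space $\tilde W$); either version suffices here, since the dimension formula gives $\dim\tilde W \leq \dim W - \dim\overline{\{y\}}$ anyway, but it would be cleaner to cite the precise statement (e.g.\ SGA4, Exp.~XIX) being used.
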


We shall also make repeated use of the $t$-exactness of the following functors without further mention.

\begin{prop}\label{t-exact_collection}
The following are $t$-exact.
\begin{enumerate}[(a)]
\item The functor $f^{\dagger}$ for a smooth morphism $f$ of relative dimension $d$ with geometrically connected fibers.

\item The functors $f_!$ and $f_*$ for a quasi-finite affine morphism $f$.

\end{enumerate}

\end{prop}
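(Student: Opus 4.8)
The two assertions concern the perverse $t$-structure, and in both parts the plan is to pair an elementary upper bound on perverse amplitude, governed by the fibre dimensions of the morphism, with its Verdier dual.

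\emph{Part (a).} Writing $f^{\dagger}=f^{*}[d]$, I would first check right $t$-exactness directly. Since $f^{*}$ is an exact functor on \'etale sheaves it commutes with ordinary cohomology sheaves, so $\mathcal{H}^{n}(f^{\dagger}\KK)=f^{*}\mathcal{H}^{n+d}(\KK)$ and $\supp\mathcal{H}^{n}(f^{\dagger}\KK)=f^{-1}(\supp\mathcal{H}^{n+d}(\KK))$; as the fibres of $f$ have dimension $d$, this support has dimension at most $\dim\supp\mathcal{H}^{n+d}(\KK)+d$, so $\KK\in{}^{p}D^{\le 0}$ forces $f^{\dagger}\KK\in{}^{p}D^{\le 0}$. (The geometric connectedness of the fibres plays no role here; it is only part of the standing convention defining $f^{\dagger}$.) For left $t$-exactness I would invoke relative purity: for $f$ smooth of relative dimension $d$ one has $f^{!}\cong f^{*}(d)[2d]$, and combined with $\bbD_{X}\circ f^{!}\cong f^{*}\circ\bbD_{Y}$ this yields a natural isomorphism $\bbD_{X}\circ f^{\dagger}\cong f^{\dagger}\circ\bbD_{Y}$ (the Tate twist being irrelevant for $t$-exactness). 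Since $\bbD$ exchanges ${}^{p}D^{\le 0}$ and ${}^{p}D^{\ge 0}$, left $t$-exactness of $f^{\dagger}$ is a formal consequence of the right $t$-exactness just established, and hence $f^{\dagger}$ is $t$-exact.

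\emph{Part (b).} By Artin vanishing (Theorem \ref{Artin_Vanishing}), $f$ affine already gives that $f_{*}$ is right $t$-exact and $f_{!}$ is left $t$-exact, so it remains to prove that $f_{!}$ is right $t$-exact; the missing statement, that $f_{*}$ is left $t$-exact, will then follow via the duality $\bbD_{Y}\circ f_{!}\cong f_{*}\circ\bbD_{X}$. (This step uses only quasi-finiteness, not affineness.) I would factor the quasi-finite separated morphism $f$, using Zariski's main theorem, as $X\xrightarrow{j}\overline{X}\xrightarrow{g}Y$ with $j$ an open immersion and $g$ finite. A finite morphism is affine and proper, so Theorem \ref{Artin_Vanishing} together with $g_{!}\cong g_{*}$ shows $g_{*}=g_{!}$ is $t$-exact; for an open immersion $j$ with closed complement $i$, the relations $j^{*}j_{!}=\mathrm{id}$ and $i^{*}j_{!}=0$ give $\supp\mathcal{H}^{n}(j_{!}\KK)=\overline{\supp\mathcal{H}^{n}(\KK)}$, a set of the same dimension as $\supp\mathcal{H}^{n}(\KK)$, so $j_{!}$ is right $t$-exact. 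Composing, $f_{!}=g_{*}\circ j_{!}$ is right $t$-exact; together with the Artin inputs this shows that both $f_{!}$ and $f_{*}$ are $t$-exact.

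The only step that is not pure bookkeeping is the left $t$-exactness in (a): unlike the right $t$-exactness (a dimension count on supports) and the Artin-based halves of (b), it genuinely needs the relative-purity isomorphism $f^{!}\cong f^{*}(d)[2d]$ and the compatibility of Verdier duality with $f^{*}$. In (b) the one nonformal input is Zariski's main theorem, used to reduce a quasi-finite morphism to the composite of an open immersion with a finite one. Neither is a real difficulty; alternatively, all of the one-sided amplitude bounds used above can be extracted directly from \cite[4.2.4]{BBDG18}, which records precisely how $f^{*}$, $f^{!}$, $f_{!}$ and $f_{*}$ shift perverse amplitude in terms of the dimensions of the fibres of $f$.
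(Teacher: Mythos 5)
Your proof is correct, and it is essentially the same argument as the paper's, which disposes of the proposition in one line by citing \cite[Proposition 4.2.5]{BBDG18} for (a) and \cite[Proposition 2.2.5]{BBDG18} plus Artin vanishing for (b). What you have done is unwind those two citations: the dimension-of-supports estimate and the duality $\bbD\circ f^{\dagger}\simeq f^{\dagger}\circ\bbD$ (up to twist) for (a), and the quasi-finite half of (b) via Zariski's main theorem together with the $t$-exactness of finite pushforward and the right $t$-exactness of $j_{!}$ for open $j$. These are the standard arguments behind the BBDG propositions, so the route matches; the only gain from spelling them out is making visible that the geometric connectedness hypothesis in (a) is never used and that the quasi-finite/affine hypotheses in (b) contribute complementary halves of the $t$-exactness.
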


\begin{proof}
(a) is \cite[Proposition 4.2.5]{BBDG18} and (b) is \cite[Proposition 2.2.5]{BBDG18} combined with Artin vanishing.
\end{proof}

The following lemma is standard and proved here for ease of exposition. 

\begin{lem}\label{perverse_usual}
Let $X/k$ be a smooth variety and $\KK$ be a lisse sheaf on $X$. Then there is natural isomorphism $\cpH^{i}(\KK)=\ccH^{i}(\KK[-\textup{dim}(X)])[\textup{dim}(X)]$.
\end{lem}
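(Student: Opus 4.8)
The plan is to reduce the statement to the trivial stratification and then compare the standard and perverse truncations. Write $n=\dim(X)$; since $X$ is irreducible this is its dimension at every point. First I would record the following characterization: a complex $\cG\in D^b_c(X)$ all of whose cohomology sheaves are lisse is constructible with respect to the one-stratum stratification $\{X\}$, so by the description of the middle perverse $t$-structure relative to a fixed adapted stratification \cite[\S 2.1]{BBDG18} we have $\cG\in{}^pD^{\leq 0}(X)$ (resp. $\cG\in{}^pD^{\geq 0}(X)$) if and only if $\ccH^m(\cG)=0$ for all $m>-n$ (resp. for all $m<-n$); in particular $\cG$ is perverse exactly when $\cG\cong\cF[n]$ for a lisse sheaf $\cF$. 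Equivalently one can verify the support and cosupport conditions directly at the generic point $\eta$, which is where the binding constraints live and where $i_\eta^!$ and $i_\eta^*$ coincide because the inclusion of $\eta$ is a pro-open immersion.

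Next I would feed the standard truncation triangle $\tau^{\leq -n}\KK\to\KK\to\tau^{>-n}\KK$ into this characterization. Both outer terms are again lisse, and by the above $\tau^{\leq -n}\KK\in{}^pD^{\leq 0}(X)$ while $\tau^{>-n}\KK\in{}^pD^{\geq 1}(X)$; by the uniqueness (up to unique isomorphism) of the perverse truncation triangle, this identifies ${}^p\tau^{\leq 0}\KK\cong\tau^{\leq -n}\KK$ functorially in $\KK$, and the same argument one step lower (or simply shifting $\KK$) gives ${}^p\tau^{\leq j}\KK\cong\tau^{\leq -n+j}\KK$ for all $j$, hence also ${}^p\tau^{\geq 0}\KK\cong\tau^{\geq -n}\KK$. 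Composing, $\cpH^{0}(\KK)={}^p\tau^{\geq 0}\,{}^p\tau^{\leq 0}\KK\cong\tau^{\geq -n}\tau^{\leq -n}\KK=\ccH^{-n}(\KK)[n]$, and replacing $\KK$ by $\KK[i]$ yields the natural isomorphism $\cpH^{i}(\KK)\cong\ccH^{i-n}(\KK)[n]$. Since $\ccH^{i}(\KK[-n])=\ccH^{i-n}(\KK)$, this is precisely the asserted identity $\cpH^{i}(\KK)=\ccH^{i}(\KK[-n])[n]$.

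There is no serious obstacle here; the statement is genuinely a bookkeeping exercise built on the stratified description of the perverse $t$-structure. The one point worth handling carefully is \emph{naturality}: the isomorphism should be extracted as the comparison of truncation triangles via their universal property, rather than as a mere abstract isomorphism of objects, so that it is visibly functorial in $\KK$ — this is what gets used implicitly wherever the lemma is later invoked.
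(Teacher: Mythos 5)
Your proof is correct but takes a genuinely different route from the paper's. The paper proceeds by induction on the number of non-vanishing cohomology sheaves $\ccH^i(\KK)$: it peels off the bottom sheaf via the triangle $\ccH^r(\KK)[-r]\to\KK\to\tau_{>r}\KK$, uses the fact that $\ccH^r(\KK)[\dim X]$ is already perverse, and transfers the inductive hypothesis across the resulting long exact sequence in $\cpH^{\bullet}$. You instead invoke the description of the perverse $t$-structure adapted to the one-stratum stratification of the smooth variety $X$, which immediately identifies ${}^pD^{\leq 0}\cap D_{\mathrm{lisse}}$ with $\tau^{\leq -n}D_{\mathrm{lisse}}$ and ${}^pD^{\geq 0}\cap D_{\mathrm{lisse}}$ with $\tau^{\geq -n}D_{\mathrm{lisse}}$, and then lets the uniqueness of truncation triangles do the rest. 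Your argument is cleaner and more structural: it gives the identification ${}^p\tau^{\leq j}\KK\cong\tau^{\leq j-n}\KK$ of entire truncation functors (not just cohomology) in one stroke, with naturality baked in via the universal property, and avoids induction. The paper's argument is more elementary in the sense that it only uses the single fact that a shifted lisse sheaf is perverse, rather than the full adapted-stratification criterion; it is closer to a bare-hands verification. Both are standard, and your version arguably better explains why the lemma is true.
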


\begin{proof}
We induct on the number $N$ of non-zero $\ccH^i(\KK)$. The statement is true when $N \leq 1$. Now assume $N>1$ and let $r$ be the least integer such that $\ccH^r(\KK) \neq 0$. We have an exact triangle $\xymatrix{\ccH^{r}(\KK)[-r] \ar[r] & \KK \ar[r] & \tau_{>r}\KK \ar^{+1}[r]&}$.

 Note that for $i\neq r$ we have $\ccH^i(\KK) \simeq \ccH^{i}(\tau_{>r}\KK)$ and $\cpH^{i+\textup{dim}(X)}(\KK)=\cpH^{i+\textup{dim}(X)}(\tau_{>r}\KK)$. The former equality follows from the definition of $\tau_{>r}$ and the latter follows from perversity of $\ccH^{r}(\KK)[\textup{dim}(X)]$ combined with $\cpH^{i+\textup{dim}(X)}(\tau_{>r}\KK)=0$ for $i \geq r$ (by induction hypothesis).
\end{proof}

\subsection{A criteria for $t$-exactness}\label{vanishing_lemma}

 Now we shall prove a general criterion for $t$-exactness of certain functors which shall be crucially used in the proof of Proposition \ref{flag_t_exactness}. Consider the following diagram of schemes over $k$.

\begin{equation}\label{vanishing_diagram}
  \begin{tikzcd}[row sep=normal, column sep=normal]
      & U \arrow[r, hook, "j_{U}"] \arrow[d, "\phi_U"] & X \arrow[d, "\phi_X"] & Y \arrow[l, hook', "i_{Y}"'] \arrow[d,"\phi_Y"]\\
      & \overline{U} \arrow[r, hook, "j_{\overline{U}}"] & \overline{X} \arrow[dr,"\pi_{\overline{X}}^{\vee}"] \arrow[dl,"\pi_{\overline{X}}"'] & \overline{Y} \arrow[l, hook', "i_{\overline{Y}}"'] \\
      Z \arrow[r,"\phi_Z"] & \bar{Z} & & \bar{Z}^{\vee} 
  \end{tikzcd}
\end{equation}

We assume the following hold.

\begin{enumerate}[(a)]
     \item $j_{\overline{U}}$ is an open immersion and $i_{\overline{Y}}$ the corresponding closed complement.
     \item $\pi_{\overline{X}}^{\vee}$ is proper. The morphisms $\pi_{\overline{X}}$ and $\pi_{\overline{Y}}:=\pi_{\overline{X}} \circ i_{\overline{Y}}$ are smooth with geometrically connected fibers.
     \item $U \hookrightarrow X \hookleftarrow Y$ is obtained from $\bar{U} \hookrightarrow \bar{X} \hookleftarrow \bar{Y}$ by base changing along $\phi_Z$.
\end{enumerate}

\begin{lem}\label{vanishing_lemma_sta}
    For a sheaf $L$ on $X$, the natural map $j_{\overline{U}!}\phi_{U*}j_{U}^{!}L \to \phi_{X*}j_{U!}j_{U}^{!}L$ is an isomorphism iff the natural map $i_{\overline{Y}}^*\phi_{X*}L \to \phi_{Y*}i^{*}_{Y}L$ is an isomorphism.
\end{lem}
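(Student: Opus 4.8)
The plan is to reduce both isomorphism statements to a single base-change computation over $\overline{X}$, exploiting that the morphism $\pi_{\overline{X}}^{\vee}$ being proper lets us commute $\pi^{\vee}_{\overline{X}*}$ with the relevant restrictions, and that the square in (c) is Cartesian. First I would unwind the definitions: the source of the first map is built from $j_{\overline{U}!}\phi_{U*}j_U^!L$ while the target is $\phi_{X*}j_{U!}j_U^!L$, so the natural transformation in question is adjoint to, or rather comes from, the base-change map associated to the square
\[
\begin{tikzcd}
U \arrow[r,"j_U"] \arrow[d,"\phi_U"'] & X \arrow[d,"\phi_X"] \\
\overline{U} \arrow[r,"j_{\overline{U}}"'] & \overline{X}
\end{tikzcd}
\]
which by (c) is Cartesian. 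The point is that $\phi_X$ need not be proper or smooth, so this base-change map is not an isomorphism for abstract reasons; the content of the lemma is that its being an isomorphism is detected on the closed complement $\overline{Y}$.

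Next I would set up the localization triangle. Let $M := \phi_{X*}j_{U!}j_U^!L$ and consider the triangle $j_{U!}j_U^!L \to L \to i_{Y*}i_Y^*L \xrightarrow{+1}$ on $X$; pushing forward along $\phi_X$ and using proper (or smooth, via (c)) base change for the Cartesian square defining $Y$ — namely $\phi_X \circ i_{Y*} = i_{\overline{Y}*}\circ \phi_{Y*}$ — gives a triangle
\[
\phi_{X*}j_{U!}j_U^!L \longrightarrow \phi_{X*}L \longrightarrow i_{\overline{Y}*}\phi_{Y*}i_Y^*L \xrightarrow{\;+1\;}.
\]
Separately, on $\overline{X}$ there is the localization triangle $j_{\overline{U}!}j_{\overline{U}}^! (\phi_{X*}L) \to \phi_{X*}L \to i_{\overline{Y}*}i_{\overline{Y}}^*(\phi_{X*}L) \xrightarrow{+1}$. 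Comparing the two triangles, both have middle term $\phi_{X*}L$, and the natural map $j_{\overline{U}!}\phi_{U*}j_U^!L \to j_{\overline{U}!}j_{\overline{U}}^!\phi_{X*}L$ (adjunction, identifying $j_{\overline{U}}^!\phi_{X*}L \simeq \phi_{U*}j_U^! L$ by smooth base change since $j_{\overline{U}}$ is an open immersion) fits into a map of triangles whose third map is precisely $i_{\overline{Y}*}$ applied to $\phi_{Y*}i_Y^*L \to i_{\overline{Y}}^*\phi_{X*}L$. By the two-out-of-three property for triangles, the first map is an isomorphism if and only if the third is; and since $i_{\overline{Y}*}$ is fully faithful (closed immersion), $i_{\overline{Y}*}$ of a map is an isomorphism iff that map is. This exactly matches the two conditions in the statement, once one checks that $j_{\overline{U}!}\phi_{U*}j_U^!L \to j_{\overline{U}!}j_{\overline{U}}^!\phi_{X*}L$ is the stated map $j_{\overline{U}!}\phi_{U*}j_U^{!}L \to \phi_{X*}j_{U!}j_{U}^{!}L$ composed with the localization-triangle map.

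The one genuinely delicate point — and the step I expect to be the main obstacle — is verifying that all three auxiliary comparison maps used above are the \emph{canonical} ones and that the squares of natural transformations commute: specifically (i) the identification $j_{\overline{U}}^! \phi_{X*} \simeq \phi_{U*} j_U^!$, which is smooth base change for the open immersion $j_{\overline{U}}$ applied to the Cartesian square from (c); (ii) the identification $i_{\overline{Y}}^* \phi_{X*} \simeq \phi_{Y*} i_Y^*$ appearing as the third term — this is \emph{not} automatic, it is the very thing being asserted in one direction, so one must instead use that the diagram of adjunction triangles is compatible, i.e. only use that $\phi_X i_{Y*} \simeq i_{\overline{Y}*}\phi_{Y*}$ (proper/closed base change, always valid) and then the map $i_{\overline{Y}}^*\phi_{X*}L \to \phi_{Y*}i_Y^*L$ is read off as the connecting map of triangles; and (iii) that the octahedron/map-of-triangles is natural. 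Concretely I would phrase it as: the map of the first kind $\alpha\colon j_{\overline{U}!}\phi_{U*}j_U^!L \to \phi_{X*}j_{U!}j_U^!L$ and the map of the third kind $\beta\colon i_{\overline{Y}}^*\phi_{X*}L \to \phi_{Y*}i_Y^*L$ sit in a commutative diagram of exact triangles with common term $\phi_{X*}L$, so $\mathrm{cone}(\alpha) \simeq i_{\overline{Y}*}\mathrm{cone}(\beta)[\,\pm 1]$, whence $\alpha$ iso $\iff$ $\mathrm{cone}(\beta)=0$ $\iff$ $\beta$ iso. Writing out this diagram carefully and invoking only the always-valid base change maps (proper base change for $i_{\overline{Y}*}$, smooth/open base change for $j_{\overline{U}}$ and the identity that $\phi_Z$-base-change is built into (c)) is the heart of the proof; the rest is formal.
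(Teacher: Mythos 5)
Your proposal is correct and takes essentially the same approach as the paper: both arguments compare the localization triangle on $\overline{X}$ for $\phi_{X*}L$ with the $\phi_{X*}$-pushforward of the localization triangle on $X$, identify first terms via open base change along $j_{\overline{U}}$, identify third terms via $\phi_{X*}i_{Y*} = i_{\overline{Y}*}\phi_{Y*}$, and conclude from two-out-of-three together with conservativity of $i_{\overline{Y}*}$. The paper streamlines this by directly computing $\mathrm{cone}(\alpha) = i_{\overline{Y}*}i_{\overline{Y}}^*\phi_{X*}j_{U!}j_{U}^{!}L$ and then recognizing $i_{\overline{Y}}^*\phi_{X*}j_{U!}j_{U}^{!}L$ as the fibre of $\beta$; two small slips in your write-up — the properness of $\pi_{\overline{X}}^{\vee}$ is irrelevant to this lemma, and the third map in your middle paragraph is written with reversed direction — are harmless and are corrected in your closing paragraph.
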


\begin{proof}
    The cone of the natural map $\alpha \colon j_{\overline{U}!}\phi_{U*}j_{U}^{!}L \to \phi_{X*}j_{U!}j_{U}^{!}L$ is $i_{\overline{Y}*}i_{\overline{Y}}^*\phi_{X*}j_{U!}j_{U}^{!}L$. Hence $\alpha$ is an isomorphism iff $i_{\overline{Y}}^*\phi_{X*}j_{U!}j_{U}^{!}L$ vanishes. The vanishing holds iff $i_{\overline{Y}}^*\phi_{X*}L \to i_{\overline{Y}}^*\phi_{X*}i_{Y*}i^{*}_{Y}L=\phi_{Y*}i^{*}_{Y}L$ is an isomorphism. 
\end{proof}

We continue working with the diagram (\ref{vanishing_diagram}). Let $\pi_X :X \to Z$ (resp. $\pi_Y: Y \to Z$) be the map induced by base change.

\begin{lem}\label{vanishing_lem_sta_2}
    For any sheaf $K$ on $Z$, the sheaf $L:=\pi_X^{*}K$ satisfies $i_{\overline{Y}}^*\phi_{X*}L \simeq \phi_{Y*}i^{*}_{Y}L$ under the natural map.
\end{lem}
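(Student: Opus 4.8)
The plan is to reduce the claim to a base-change statement and then verify it using the smoothness hypothesis (b). By definition $L = \pi_X^* K$ where $\pi_X \colon X \to Z$ is the base change of $\pi_{\overline{X}} \colon \overline{X} \to \bar Z$ along $\phi_Z$. Since $\pi_X = \pi_Z \circ \phi_X$ for the appropriate map, one has $L \simeq \phi_X^* \pi_{\overline{X}}^* K'$ where $K'$ is pulled back suitably along $\phi_Z$; in any case the key point is that $L$ comes by $*$-pullback along the left-hand column of the diagram, so that on the closed stratum it is compatible with restriction. I would first set up the cartesian squares cleanly: the front face $U \hookrightarrow X \hookleftarrow Y$ is obtained from the back face $\overline{U} \hookrightarrow \overline{X} \hookleftarrow \overline{Y}$ by base change along $\phi_Z$ (hypothesis (c)), and the maps $\pi_X, \pi_Y$ are the induced projections to $Z$.

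Next, I would invoke proper (or smooth) base change. The square with corners $Y, X, \overline{Y}, \overline{X}$ and maps $i_Y, \phi_X, \phi_Y, i_{\overline{Y}}$ is cartesian. The natural base-change morphism $i_{\overline{Y}}^* \phi_{X*} \to \phi_{Y*} i_Y^*$ is in general not an isomorphism, so the content is that it \emph{is} one on the specific sheaf $L = \pi_X^* K$. The mechanism: $L$ is constant along the fibers of $\pi_X$ in the sense that it is pulled back from $Z$, and restriction to $Y$ and then pushing forward along $\pi_Y$ recovers $K$ up to the relevant shift/twist — more precisely, smoothness with geometrically connected fibers (hypothesis (b)) gives $\pi_{X*}\pi_X^* K \simeq K$ type identities, or rather one uses that $\phi_X$ and $\phi_Y$ are themselves base changes and applies base change for the \emph{other} pair of faces. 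Concretely, I would factor the verification through the identity $\phi_{X*} \pi_X^* K \simeq \pi_{\overline X}^* \phi_{Z*} K$ (smooth base change along the left square, using that $\pi_{\overline X}$ is smooth), and similarly $\phi_{Y*} \pi_Y^* K \simeq \pi_{\overline Y}^* \phi_{Z*} K$; then restricting the first along $i_{\overline Y}$ gives $i_{\overline Y}^* \pi_{\overline X}^* \phi_{Z*} K = \pi_{\overline Y}^* \phi_{Z*} K$ since $\pi_{\overline Y} = \pi_{\overline X} \circ i_{\overline Y}$, and $i_Y^* L = i_Y^* \pi_X^* K = \pi_Y^* K$ (base change of $\pi_X$ along $i_{\overline Y}$ equals $\pi_Y$), so the two sides literally agree, and a diagram chase confirms the comparison map is the identification.

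The step I expect to be the main obstacle is bookkeeping: making sure the base-change isomorphisms being invoked are the correct ones (smooth base change for the $*$-pushforwards requires care about which maps are smooth/proper, and one must check the relevant squares are cartesian using hypotheses (b) and (c)), and that the natural map in the statement is exactly the composite of these canonical isomorphisms rather than merely abstractly isomorphic to it. The commutativity of the resulting diagram of base-change maps — i.e. that the canonical map $i_{\overline Y}^* \phi_{X*} L \to \phi_{Y*} i_Y^* L$ coincides with the isomorphism built from the two smooth base-change isomorphisms and the identifications $\pi_{\overline Y} = \pi_{\overline X}\circ i_{\overline Y}$ and $\pi_Y = $ (base change of $\pi_X$) — is a standard but slightly tedious compatibility of base-change transformations, which I would state and leave to the reader or dispatch with a one-line reference to the functoriality of base change morphisms. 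The geometric hypotheses do no work beyond guaranteeing the cartesianness and the applicability of smooth/proper base change; no perversity input is needed for this lemma.
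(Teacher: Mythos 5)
Your ``Concretely'' paragraph is exactly the paper's argument: apply smooth base change to the cartesian square for $\pi_{\overline X}$ to get $\phi_{X*}\pi_X^*K \simeq \pi_{\overline X}^*\phi_{Z*}K$, restrict along $i_{\overline Y}$ using $\pi_{\overline Y}=\pi_{\overline X}\circ i_{\overline Y}$, then apply smooth base change a second time for $\pi_{\overline Y}$ to identify the result with $\phi_{Y*}i_Y^*L$. (The opening paragraph's factoring $\pi_X = \pi_Z\circ\phi_X$ and $L\simeq\phi_X^*\pi_{\overline X}^*K'$ is garbled --- $K$ lives on $Z$, not $\bar Z$, and $\phi_X$ maps to $\overline X$ --- but it plays no role in the actual argument that follows, which is correct and matches the paper.)
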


\begin{proof}
Using (b), by smooth base change $\pi_{\overline{X}}^{*}\phi_{Z*}K \simeq \phi_{X*}\pi^{*}K$, under the natural map. Hence $\pi_{\overline{Y}}^*\phi_{Z*}K=i_{\overline{Y}}^{*}\pi_{\overline{X}}^{*}\phi_{Z*}K \simeq i_{\overline{Y}}^*\phi_{X*}\pi^{*}K$ under the natural map. Since $\pi_{\overline{Y}}$ is also smooth by assumption, the result follows from another application of smooth base change.
\end{proof}

Combining Lemmas \ref{vanishing_lemma_sta} and \ref{vanishing_lem_sta_2} we obtain the following corollary. 

\begin{cor}\label{R_!_description}
The functor $\cR_{!} \colonequals \pi_{\overline{X}*}^{\vee}j_{\overline{U}!}j_{\overline{U}}^!\pi_{\overline{X}}^{\dagger}\phi_{Z*}$ is naturally equivalent to $(\pi_{\overline{X}}^{\vee}\circ \phi_X)_* j_{U!}j_{U}^{!}\pi_X^{\dagger}$.

\end{cor}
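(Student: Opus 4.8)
The plan is to migrate the functors $\pi_{\overline{X}}^{\dagger}$, $j_{\overline{U}!}j_{\overline{U}}^{!}$ and finally $\pi^{\vee}_{\overline{X}*}$ past $\phi_{Z*}$ one step at a time, always through the canonical base-change and adjunction maps, so that the resulting identification is functorial in the input sheaf $K$ on $Z$.

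First I would dispose of the innermost piece $\pi_{\overline{X}}^{\dagger}\phi_{Z*}$. By hypothesis (c) the square formed by $\pi_X\colon X\to Z$, $\pi_{\overline{X}}\colon \overline{X}\to\overline{Z}$, $\phi_X$ and $\phi_Z$ is Cartesian, and $\pi_{\overline{X}}$ is smooth by (b); hence smooth base change provides a canonical isomorphism $\pi_{\overline{X}}^{*}\phi_{Z*}K \xrightarrow{\,\sim\,} \phi_{X*}\pi_X^{*}K$, and after shifting by the common relative dimension, $\pi_{\overline{X}}^{\dagger}\phi_{Z*}K \simeq \phi_{X*}\pi_X^{\dagger}K$. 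Set $L := \pi_X^{\dagger}K$. Next I would move $j_{\overline{U}}^{!}$ across $\phi_{X*}$: since $j_{\overline{U}}$ is an open immersion we have $j_{\overline{U}}^{!}=j_{\overline{U}}^{*}$ (and likewise for $j_U$), and by (c) the square formed by $j_U$, $j_{\overline{U}}$, $\phi_U$, $\phi_X$ is Cartesian with the horizontal maps open immersions, so restriction to an open commutes with $*$-pushforward and yields a canonical isomorphism $j_{\overline{U}}^{!}\phi_{X*}L \simeq \phi_{U*}j_U^{!}L$. Combining this with the previous step,
\[
\cR_!K \;=\; \pi^{\vee}_{\overline{X}*}\,j_{\overline{U}!}j_{\overline{U}}^{!}\,\pi_{\overline{X}}^{\dagger}\phi_{Z*}K \;\simeq\; \pi^{\vee}_{\overline{X}*}\,j_{\overline{U}!}\,\phi_{U*}\,j_U^{!}L .
\]

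Finally I would feed in the two preceding lemmas. Applying Lemma \ref{vanishing_lem_sta_2} (to $K$, up to the shift defining $(-)^{\dagger}$, which commutes with every functor in sight) shows that $i_{\overline{Y}}^{*}\phi_{X*}L \simeq \phi_{Y*}i_Y^{*}L$ under the natural map; Lemma \ref{vanishing_lemma_sta} then forces the natural map $j_{\overline{U}!}\phi_{U*}j_U^{!}L \to \phi_{X*}j_{U!}j_U^{!}L$ to be an isomorphism. Post-composing with $\pi^{\vee}_{\overline{X}*}$ and unwinding $L$ gives
\[
\cR_!K \;\simeq\; \pi^{\vee}_{\overline{X}*}\,\phi_{X*}\,j_{U!}j_U^{!}\pi_X^{\dagger}K \;=\; (\pi^{\vee}_{\overline{X}}\circ\phi_X)_*\,j_{U!}j_U^{!}\pi_X^{\dagger}K .
\]

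I do not expect a genuine obstacle: all of the geometric content has already been absorbed into Lemmas \ref{vanishing_lemma_sta} and \ref{vanishing_lem_sta_2}, and what remains is the bookkeeping of (i) which squares in (\ref{vanishing_diagram}) are Cartesian so that smooth base change and open-immersion base change apply, and (ii) the harmless shift by the relative dimension in passing from $(-)^{*}$ to $(-)^{\dagger}$. The only mild subtlety — the closest thing to a "hard part" — is making sure the chain of isomorphisms is assembled entirely from canonical maps, so that the final equivalence $\cR_!\simeq(\pi^{\vee}_{\overline{X}}\circ\phi_X)_* j_{U!}j_U^{!}\pi_X^{\dagger}$ is functorial rather than merely objectwise.
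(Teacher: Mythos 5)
Your proof is correct and follows essentially the same route as the paper: the paper's proof is the single line ``combining Lemmas~\ref{vanishing_lemma_sta} and~\ref{vanishing_lem_sta_2},'' which implicitly relies on exactly the smooth base change $\pi_{\overline{X}}^{\dagger}\phi_{Z*}\simeq\phi_{X*}\pi_X^{\dagger}$ and the open-immersion base change $j_{\overline{U}}^{!}\phi_{X*}\simeq\phi_{U*}j_U^{!}$ that you spell out. You have merely made explicit the bookkeeping that the authors left tacit.
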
    

Consider the following additional conditions on the Diagram (\ref{vanishing_diagram}).
\begin{enumerate}
      \item[(d)] The morphism  $\phi_Z: Z \to \bar{Z}$ is a quasi-finite morphism, and  $\pi_{\overline{X}}^{\vee} \circ j_{\overline{U}}$ is affine.
     \item[(e)] The morphism $\pi_{\overline{X}}^{\vee} \circ \phi_X$ is affine.
\end{enumerate}
Now we can state the main lemma of this section. This abstracts out the `double' Artin vanishing needed for the $t$-exactness of the Radon transforms with source an affine scheme.

\begin{lem}\label{criterion_t_exactness_2}
    With the notations as above if (a)-(d) are satisfied then $\cR_{!}$ is left $t$-exact. Moreover if (e) also holds then $R_!$ is $t$-exact.
\end{lem}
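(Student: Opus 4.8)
The plan is to exploit the two descriptions of $\cR_!$ afforded by Corollary \ref{R_!_description}: as the composite $\pi_{\overline{X}*}^{\vee}j_{\overline{U}!}j_{\overline{U}}^!\pi_{\overline{X}}^{\dagger}\phi_{Z*}$, and as $(\pi_{\overline{X}}^{\vee}\circ \phi_X)_* j_{U!}j_{U}^{!}\pi_X^{\dagger}$. For the left $t$-exactness statement I would factor the first description and apply $t$-exactness results termwise. Since $\phi_Z$ is quasi-finite, it factors (Zariski's main theorem) as an open immersion followed by a finite morphism; the finite part is affine and quasi-finite so $\phi_{Z*}$ on it is $t$-exact by Proposition \ref{t-exact_collection}(b), and I must be slightly careful with the open-immersion part — but in fact all I need is that $\phi_{Z*}$ is left $t$-exact, which holds for any quasi-finite morphism since $j_*$ for an open immersion is left $t$-exact. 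Next, $\pi_{\overline{X}}^{\dagger}$ is $t$-exact by Proposition \ref{t-exact_collection}(a) (using hypothesis (b)). Then $j_{\overline{U}!}j_{\overline{U}}^!$: here is where condition (d) enters — I would instead keep this together with $\pi^\vee_{\overline X *}$ and use the alternative grouping $\pi^\vee_{\overline X *}\circ j_{\overline U !}$, noting that $j_{\overline U}^!$ on the open $\overline U$ is just restriction $j_{\overline U}^*$, which is right $t$-exact, but that is the wrong direction. The cleaner route is: write $\pi^\vee_{\overline X*}j_{\overline U!}j_{\overline U}^! = \pi^\vee_{\overline X*}j_{\overline U!}j_{\overline U}^* $ and observe $(\pi^\vee_{\overline X}\circ j_{\overline U})_! = \pi^\vee_{\overline X*}\circ j_{\overline U!}$ because $\pi^\vee_{\overline X}$ is proper (hypothesis (b)), so $j_{\overline U!}$ followed by a proper pushforward equals the shriek pushforward of the affine (hypothesis (d)) morphism $\pi^\vee_{\overline X}\circ j_{\overline U}$; by Artin vanishing (Theorem \ref{Artin_Vanishing}) this is left $t$-exact. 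But I still need $j_{\overline U}^*\pi_{\overline X}^\dagger\phi_{Z*}$ to land in perverse degrees $\geq 0$, i.e. $j_{\overline U}^*$ applied to something perverse-nonnegative — and $j_{\overline U}^*$ is right $t$-exact, again the wrong way.

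So the genuinely correct organization, which I would adopt, is to use the second description $\cR_! \simeq (\pi_{\overline X}^\vee\circ\phi_X)_* j_{U!}j_U^!\pi_X^\dagger$ and the analogue of Lemma \ref{vanishing_lemma_sta}/\ref{vanishing_lem_sta_2} reasoning: for $K\in\cP(Z)$ with $K$ in perverse degree $\geq 0$, put $L=\pi_X^\dagger K = \pi_X^* K$. Then $\pi_X^\dagger$ is $t$-exact (Proposition \ref{t-exact_collection}(a), via the base change in (c) of the smooth maps in (b)), so $L$ lies in perverse degree $\geq 0$. Now $j_{U!}j_U^!L \to L$ has cone $i_{Y*}i_Y^*L$; since $i_Y^*$ is right $t$-exact and $i_{Y*}$ is $t$-exact (closed immersion), the cone sits in perverse degree $\geq 0$, hence so does $j_{U!}j_U^!L$ (as $L$ does and it maps to it with nonnegative-degree cone — use the long exact sequence of perverse cohomology). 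Finally $(\pi_{\overline X}^\vee\circ\phi_X)_*$: by hypothesis (d), $\pi^\vee_{\overline X}\circ j_{\overline U}$ is affine, and via Corollary \ref{R_!_description}'s identification $\pi^\vee_{\overline X*}j_{\overline U!}j_{\overline U}^!\pi_{\overline X}^\dagger\phi_{Z*} \simeq (\pi^\vee_{\overline X}\circ\phi_X)_* j_{U!}j_U^!\pi_X^\dagger$, the functor $j_{U!}j_U^!$ followed by $(\pi^\vee_{\overline X}\circ\phi_X)_*$ equals $(\pi^\vee_{\overline X}\circ\phi_X\circ j_U)_!$ composed with the relevant restriction — more precisely, $(\pi^\vee_{\overline X}\circ\phi_X)_*j_{U!} = (\pi^\vee_{\overline X}\circ\phi_X\circ j_U)_!$ since $\pi^\vee_{\overline X}\circ\phi_X$ is… here I need properness or the affineness; in fact $(\pi^\vee_{\overline X}\circ\phi_X)\circ j_U$ is the base change of $\pi^\vee_{\overline X}\circ j_{\overline U}$ along $\phi_Z$ — wait, it is the composite of that base change with $\phi_Z$ itself, which by (d) is affine composed with quasi-finite, hence affine; so $(\cdots)_!$ on it is left $t$-exact by Artin vanishing. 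Thus $\cR_!(K)$ lies in perverse degree $\geq 0$, proving left $t$-exactness.

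For the $t$-exactness under the additional hypothesis (e), I would argue dually. Left $t$-exactness is already established, so it remains to show right $t$-exactness: for $K$ in perverse degree $\leq 0$, $\cR_!(K)$ is in perverse degree $\leq 0$. Using the description $\cR_! \simeq (\pi^\vee_{\overline X}\circ\phi_X)_*j_{U!}j_U^!\pi_X^\dagger$: again $\pi_X^\dagger$ is $t$-exact so $L:=\pi_X^\dagger K$ is in degree $\leq 0$; $j_U^!$ on the open $U$ is $j_U^*$, which is $t$-exact on opens, so $j_U^*L$ is in degree $\leq 0$; then $j_{U!}$ is left $t$-exact in general but for an open immersion $j_{U!}$ is right $t$-exact as well when — hmm, $j_!$ for an open immersion is right $t$-exact (it is exact on the heart only after… no): actually $j_!$ for any immersion is left $t$-exact, and for an affine open immersion both, but in general I only get $j_{U!}j_U^!L$ in degree $\leq 0$ by a cone argument using that $i_{Y*}i_Y^!L$ (the fiber of $L\to i_{Y*}i_Y^*L$ is $j_{U!}j_U^!L$, whose defining triangle gives $j_{U!}j_U^!L$ in degree $\leq 0$ once we know $i_{Y*}i_Y^*L$ is in degree $\leq 1$, which holds since $i_Y^*$ is right $t$-exact so $i_Y^*L$ is in degree $\leq 0$). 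Finally $(\pi^\vee_{\overline X}\circ\phi_X)_* = $ pushforward along an affine morphism by hypothesis (e) (combined with (d) it is affine), hence right $t$-exact by Artin vanishing (Theorem \ref{Artin_Vanishing}). Chaining, $\cR_!(K)$ is in perverse degree $\leq 0$.

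\textbf{Main obstacle.} The delicate point is bookkeeping the direction of $t$-exactness through the chain $\pi_X^\dagger$, $j_{U!}j_U^!$, and the final affine pushforward, and in particular justifying the identifications $(\pi^\vee_{\overline X}\circ\phi_X)_*\circ j_{U!}\simeq (\pi^\vee_{\overline X}\circ\phi_X\circ j_U)_!$ and the affineness of $\pi^\vee_{\overline X}\circ\phi_X\circ j_U$ — these rest on properness of $\pi^\vee_{\overline X}$ from (b) together with the affineness inputs (d) and (e) and base change (c), and on Corollary \ref{R_!_description} to freely pass between the $\overline X$-side and the $X$-side descriptions. The $j_{U!}j_U^!$ step is really the ``second Artin vanishing'': it is the place where one needs that the \emph{complement} data behaves well, and it is handled not by a $t$-exactness of a single functor but by a cone/long-exact-sequence argument in perverse cohomology applied to the localization triangle $j_{U!}j_U^!L\to L\to i_{Y*}i_Y^*L$.
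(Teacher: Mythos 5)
Your first instinct was exactly the paper's proof, but you talked yourself out of it because of a false belief about open immersions. You write that $j_{\overline U}^!$ (which equals $j_{\overline U}^*$ since $j_{\overline U}$ is open) is ``right $t$-exact, but that is the wrong direction.'' In fact restriction to an open is \emph{$t$-exact} for the perverse $t$-structure: perversity is a local condition, so $j^* = j^!$ preserves both ${}^p D^{\leq 0}$ and ${}^p D^{\geq 0}$. Once you accept this, the factorization you wrote in the middle of your second paragraph is the whole proof of the first assertion: $\cR_! = (\pi_{\overline X}^\vee \circ j_{\overline U})_! \circ j_{\overline U}^! \circ \pi_{\overline X}^\dagger \circ \phi_{Z*}$ (using properness of $\pi_{\overline X}^\vee$ to identify $\pi^\vee_{\overline X*} j_{\overline U!}$ with $(\pi_{\overline X}^\vee j_{\overline U})_!$), and every factor is at least left $t$-exact: $\phi_{Z*}$ because $\phi_Z$ is quasi-finite, $\pi_{\overline X}^\dagger$ and $j_{\overline U}^!$ because they are $t$-exact, and $(\pi_{\overline X}^\vee j_{\overline U})_!$ by Artin vanishing since that composite is affine by (d).

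The cone-argument detour you take instead has a genuine gap. You assert that ``since $i_Y^*$ is right $t$-exact and $i_{Y*}$ is $t$-exact (closed immersion), the cone $i_{Y*}i_Y^*L$ sits in perverse degree $\geq 0$''. That is a non-sequitur: right $t$-exactness of $i_Y^*$ gives an \emph{upper} bound on the perverse degree of $i_Y^*L$, not a lower bound. In fact one cannot expect such a lower bound: $i_Y^*L = i_Y^*\pi_X^\dagger K \simeq \pi_Y^\dagger K[c]$ where $c>0$ is the codimension of $\overline Y$ in $\overline X$, so $i_Y^*L$ lives in perverse degrees $\geq -c$, not $\geq 0$. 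So your proposed proof of the cone being in degree $\geq 0$, and hence of $j_{U!}j_U^! L$ being in degree $\geq 0$, fails. (A further problem with the second description: the identity $(\pi^\vee_{\overline X}\circ\phi_X)_* j_{U!} = (\pi^\vee_{\overline X}\circ\phi_X\circ j_U)_!$ that you reach for requires $\pi^\vee_{\overline X}\circ\phi_X$ to be \emph{proper}, which it is not; that is precisely why the paper uses the $\overline X$-side description for left $t$-exactness.) Similarly, in the second part your cone argument would only yield that $j_{U!}j_U^!L$ is in degree $\leq 1$, not $\leq 0$. The paper avoids the cone entirely: in the second description, $\pi_X^\dagger$ and $j_U^!=j_U^*$ are $t$-exact, $j_{U!}$ is right $t$-exact, and $(\pi_{\overline X}^\vee\circ\phi_X)_*$ is right $t$-exact by Artin vanishing and (e), so the composite is right $t$-exact.
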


\begin{proof}
By (b) above, since $\pi_{\overline{X}}^{\vee}$ is proper we can rewrite $\cR_{!}=(\pi_{\overline{X}}^{\vee} \circ j_{\overline{U}})_!j_{\overline{U}}^!\pi_{\overline{X}}^{\dagger}\phi_{S*}$. Since $\pi_{\overline{X}}^{\vee} \circ j_{\overline{U}}$ is affine by assumption ((c) above), Artin vanishing implies $(\pi_{\overline{X}}^{\vee} \circ j_{\overline{U}})_!$ is left $t$-exact. Since $\phi_S$ is quasi-finite, \cite[Proposition 2.2.5]{BBDG18} implies that $\phi_{S*}$ is left $t$-exact. Thus $\cR_{!}$ is left $t$-exact.

When (e) holds, using Corollary \ref{R_!_description} it suffices to show that $(\pi_{\overline{X}}^{\vee}\circ \phi_X)_* j_{U!}j_{U}^{!}\pi_X^{\dagger}$ is right $t$-exact. By \cite[\S 4.2.4]{BBDG18}, we know that $j_{U_!}$ is right $t$-exact. Moreover since $\pi_{\overline{X}}^{\vee}\circ \phi_X$ is affine ((e) above), Artin vanishing implies the right $t$-exactness of $(\pi_{\overline{X}}^{\vee}\circ \phi_X)_*$. Thus $\cR_{!}$ is right $t$-exact. This proves the exactness of $\cR_{!}$.
\end{proof}

\section{Graded quotients of a Brylinski-Radon transformation}\label{Brylinski-Radon_Flag}

We begin by setting up notations and recalling some basic facts about flag bundles \cite[Expose VI, \S 4]{SGA6}. In the rest of the article, let $S$ be the base scheme. 

\subsection{Recollection about Flag bundles}\label{Flag_Variety_Notation}

\begin{enumerate}[(a)]

    \item Let $\sE$ be a locally free sheaf of rank $N+1$ on $S$. Let $\bbP(\sE)$ be the associated projective bundle. Thus for any scheme $f \colon X \to S$, the set $\bbP(\sE)(X)$ is the collection of rank $1$ locally free quotients of $f^*\sE$ on $X$.

    \item We denote by $\textup{Fl}(\sE)$\footnote{We shall suppress $\sE$ from the notation when there is no scope of confusion.} the full flag bundle of $\sE$. More precisely $\textup{Fl}$ represents the functor defined on schemes over $S$, which to a scheme $f: X \to S$ associates the set $(\sL_{r})_{N \geq r \geq 1}$ of locally free sheaves on $X$ such that for all $r$, $\sL_r$ is of rank $r$ and is a quotient of $\sL_{r+1}$. Here we set $\sL_{N+1}=f^*\sE$. Note that $\textup{Fl}$ is smooth over $S$.

    \item We have the \textit{universal} flag $\overline{Q}_{0}:=\bbP \times_S \textup{Fl} \supset \overline{Q}_{-1} \supset \overline{Q}_{-2} \cdots \overline{Q}_{-N} \supset \overline{Q}_{-N-1}=\emptyset$. For any scheme $f \colon X \to S$, $\overline{Q}_{r}(X) \subseteq \bbP(X) \times \textup{Fl}(X)$ consisting of those one dimensional quotients of $f^{*}\sE$ which are also quotients of $\sL_{N+r}$ in the above notation. 

    \item The closed immersion $\overline{Q}_{r} \hookrightarrow \bbP \times_S \textup{Fl}$ is regular of  codimension $-r$, and moreover $\overline{Q_{r}}$ is smooth over $S$. The maps $\overline{Q}_{r} \backslash \overline{Q}_{r-1} \to \textup{Fl}(N)$ are affine (with fibers isomorphic to $\bbA^{N+r}$). These statements can be verified by Zariski locally over $S$, in particular, when $\sE=\sO_S^{\oplus N+1}$, where they are obvious.

     \item  For $-1 \geq r \geq -N-1$, we let $\overline{U}_{r}$ the complement of $\overline{Q}_{r}$, and denote by $\bar{l}_{r}:\overline{U}_{r} \hookrightarrow \bbP \times_S \textup{Fl}$, the corresponding open immersion. Thus we have adjunction maps $\overline{l}_{r!}\overline{l}_{r}^{!} \to \overline{l}_{r-1!}\overline{l}^{!}_{r-1}$.
     
     \item  For $-1 \geq r \geq -N-1$, we denote by $\overline{l}_{r,r-1} \colon \overline{Q}_{r} \backslash \overline{Q}_{r-1} \hookrightarrow \overline{Q}_{0}$ the locally closed immersion.

     \item Finally let  $\overline{\pi}$ (resp. $\overline{\pi}^{\vee}$) the projection map from $\bbP \times_S \textup{Fl}$ to $\bbP$ (resp. $\textup{Fl}$).

\end{enumerate}

\subsection{$t$-exactness of the graded pieces of a Brylinski-Radon transformation}

We continue using the notations from \S \ref{Flag_Variety_Notation}. Consider the functors $\textup{F}_{S}^{r}$ and $\textup{Gr}^{r}_{\textup{F}_S}$\footnote{We shall suppress $S$ from the notation when there is no scope for confusion.}  from $D^{b}_c(\bbP)$  to $D^b_c(\textup{Fl})$ defined below

   \begin{center}
   \begin{equation}\label{filt_defn}
       \textup{F}^{r}(\KK)=\left \{
	\begin{array}{ll}
		\overline{\pi}^{\vee}_{*}\overline{\pi}^{\dagger}\KK   & \mbox{if } r \leq -N-1=-\textup{rk}(\sE) \\
		\overline{\pi}^{\vee}_{*}\overline{l}_{r-1!}\overline{l}_{r-1}^!\overline{\pi}^{\dagger}\KK & \mbox{if } -N \leq r \leq 0 \\
        0 & \mbox{if } r>0 \\
	\end{array}
\right.
\end{equation}
\noindent and
\begin{equation}\label{grad_defn}
       \textup{Gr}^{r}_{\textup{F}}(\KK)=\left \{
	\begin{array}{ll}
		0  & \mbox{if } r \leq -N-1=-\textup{rk}(\sE) \\
		\overline{\pi}^{\vee}_{*}\overline{l}_{r,r-1!}\overline{l}_{r,r-1}^{*}\overline{\pi}^{\dagger}\KK & \mbox{if } -N \leq r \leq 0 \\
        0 & \mbox{if } r>0 \\
	\end{array}
\right..
\end{equation}

\end{center}

Now note that by definition one has a triangle

\begin{equation}\label{basic_Gr_triangle}
\small \xymatrix{\textup{F}^{r+1}\KK\ar[r] & \textup{F}^{r}\KK\ar[r] & \textup{Gr}^{r}_{\textup{F}}\KK \ar^-{+1}[r]&}.
\end{equation}

\begin{rem}\label{Nota_Sugge}
    Our notation here is suggestive and we would like to think of $\textup{F}$ as a functor to the filtered derived category of $\textup{Fl}$. However until \S \ref{P_Dec_F_Functors}, we will only need results about $\textup{Gr}^{r}_{\textup{F}}$, and hence avoid the enhancement until required.
\end{rem}

\begin{prop}\label{flag_t_exactness}
    Let $\phi \colon Z \to \bbP$ be a quasi-finite morphism. Then $\textup{Gr}^{r}_{\textup{F}}\circ \phi_{*}[r]$ is left $t$-exact. Moreover, if $Z$ is affine over $S$, then it is $t$-exact.
\end{prop}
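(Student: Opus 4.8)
The plan is to reduce the statement about $\textup{Gr}^r_{\textup{F}}\circ\phi_*[r]$ to an application of the ``double'' Artin vanishing criterion of Lemma \ref{criterion_t_exactness_2}. First I would unwind the definition: by \eqref{grad_defn}, for $-N\le r\le 0$ we have
\[
\textup{Gr}^r_{\textup{F}}(\phi_*\KK)=\overline{\pi}^{\vee}_{*}\,\overline{l}_{r,r-1!}\,\overline{l}_{r,r-1}^{*}\,\overline{\pi}^{\dagger}\phi_*\KK,
\]
where $\overline{l}_{r,r-1}\colon \overline{Q}_r\setminus\overline{Q}_{r-1}\hookrightarrow \overline{Q}_0=\bbP\times_S\textup{Fl}$ is the locally closed immersion. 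The locally closed immersion factors as the open immersion $\overline{Q}_r\setminus\overline{Q}_{r-1}\hookrightarrow \overline{U}_{r-1}$ (complement of $\overline{Q}_{r-1}$) followed by the closed immersion into... — more usefully, I would rewrite $\overline{l}_{r,r-1!}\overline{l}_{r,r-1}^{*}$ in terms of the adjunction of $\overline{l}_{r-1}$ restricted to the closed stratum $\overline{Q}_r$. Concretely, letting $\overline{U}_{r-1}$ be the complement of $\overline{Q}_{r-1}$ with open immersion $\bar l_{r-1}$, and writing $c_r\colon\overline{Q}_r\hookrightarrow\overline{Q}_0$ and $b_r\colon \overline{Q}_r\setminus\overline{Q}_{r-1}\hookrightarrow \overline{Q}_r$ for the relevant immersions, one has $\overline{l}_{r,r-1!}\overline{l}_{r,r-1}^{*}\simeq c_{r*}\,b_{r!}b_r^{*}\,c_r^{!}(\cdot)$ up to a shift coming from the regular codimension-$(-r)$ embedding $c_r$ (using purity: $c_r^{!}\simeq c_r^{*}[2r](r)$ on the smooth ambient space restricted to the smooth $\overline{Q}_r$). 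Pushing forward along $\overline{\pi}^{\vee}$, this exhibits $\textup{Gr}^r_{\textup{F}}\circ\phi_*$ as the composite of a shift, the pushforward along $\overline{Q}_r\to\textup{Fl}$, and the ``$j_!j^*$ of an open complement'' operation on $\overline{Q}_r$ — which is precisely the shape handled by Corollary \ref{R_!_description} / Lemma \ref{criterion_t_exactness_2}.

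The second step is to set up Diagram \eqref{vanishing_diagram} with the correct inputs. I would take $\bar X=\overline{Q}_r$, with $\pi_{\bar X}\colon \overline{Q}_r\to\bbP$ the restriction of $\overline{\pi}$ and $\pi_{\bar X}^{\vee}\colon\overline{Q}_r\to\textup{Fl}$ the restriction of $\overline{\pi}^{\vee}$; the open $\overline{U}$ is $\overline{Q}_r\setminus\overline{Q}_{r-1}$ with closed complement $\overline{Y}=\overline{Q}_{r-1}$; and $\phi_Z=\phi\colon Z\to\bbP=\bar Z$, with $X,Y,U$ obtained by base change along $\phi$. Then I must verify hypotheses (a)–(d) (and (e) in the affine case): (a) is immediate; (b) requires that $\pi_{\bar X}^{\vee}$ is proper (true, as $\overline{Q}_r\to\textup{Fl}$ is proper being closed in $\bbP\times_S\textup{Fl}\to\textup{Fl}$) and that $\pi_{\bar X}$ and $\pi_{\bar Y}\colon\overline{Q}_{r-1}\to\bbP$ are smooth with geometrically connected fibers — this is where I would invoke \S\ref{Flag_Variety_Notation}(d), checking Zariski-locally on $S$ with $\sE$ trivial that the fibers of $\overline{Q}_r\to\bbP$ are flag-type bundles and hence smooth and connected; (c) is the base-change setup; (d) needs $\phi$ quasi-finite (hypothesis) and $\pi_{\bar X}^{\vee}\circ \bar l_{r-1}\colon \overline{Q}_r\setminus\overline{Q}_{r-1}\to\textup{Fl}$ affine, which is exactly \S\ref{Flag_Variety_Notation}(d) ($\overline{Q}_r\setminus\overline{Q}_{r-1}\to\textup{Fl}$ has fibers $\bbA^{N+r}$, hence is affine); (e), needed only when $Z/S$ is affine, asks that $\pi_{\bar X}^{\vee}\circ\phi_X$ be affine, which follows since $\phi_X\colon X\to\overline{Q}_r\setminus\overline{Q}_{r-1}$ is the base change of the quasi-finite (hence affine after the relevant reductions, or using that $\phi$ itself can be assumed affine—actually one needs $Z/S$ affine) $\phi$, composed with the affine map to $\textup{Fl}$; I would need to be slightly careful that ``quasi-finite'' plus ``$Z/S$ affine'' gives what I want, perhaps passing through $Z\to\bbP\times_S\textup{Fl}$ and noting $\bbP\times_S\textup{Fl}\to\textup{Fl}$ restricted to $\overline{Q}_r\setminus\overline{Q}_{r-1}$ is affine.

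Granting (a)–(d), Lemma \ref{criterion_t_exactness_2} gives that $\cR_!=\pi^{\vee}_{\bar X*}\bar l_{r-1!}\bar l_{r-1}^{!}\pi_{\bar X}^{\dagger}\phi_{Z*}$ on $\overline{Q}_r$ is left $t$-exact, and $t$-exact if additionally (e) holds. The final step is bookkeeping with shifts: I must match $\cR_!$ on $\overline{Q}_r$ (which uses $\pi_{\bar X}^{\dagger}=\pi_{\bar X}^{*}[\dim]$ for the fiber dimension of $\overline{Q}_r\to\bbP$, and $\bar l_{r-1}^{!}=\bar l_{r-1}^{*}$ since $\bar l_{r-1}$ is an open immersion) against the displayed formula for $\textup{Gr}^r_{\textup{F}}\circ\phi_*[r]$, which uses $\overline{\pi}^{\dagger}$ (fiber dimension of $\bbP\times_S\textup{Fl}\to\bbP$, namely $\dim\textup{Fl}$) composed with $c_r^{*}$ and the purity shift $[2r]$ from $c_r^{!}$. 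Tracking these: the fiber dimension of $\overline{Q}_r\to\bbP$ equals $\dim\textup{Fl}-(-r)=\dim\textup{Fl}+r$ (since $\overline{Q}_r\subset\bbP\times_S\textup{Fl}$ has codimension $-r$ and dominates $\bbP$), so $\pi_{\bar X}^{\dagger}$ differs from $c_r^{*}\overline{\pi}^{\dagger}$ by exactly the shift $[r]$ (modulo the Tate twist, which is $t$-exact), and this $[r]$ is precisely the shift in the statement. I expect the shift bookkeeping — reconciling the codimension-$(-r)$ purity shift with the relative-dimension shifts in $(\cdot)^{\dagger}$ on the two towers — to be the main obstacle, in the sense that it is the only place an error could creep in; the $t$-exactness itself is a formal consequence of Lemma \ref{criterion_t_exactness_2} once the geometric hypotheses (b), (d), (e) are checked, and those checks reduce to the already-recorded facts in \S\ref{Flag_Variety_Notation}(d).
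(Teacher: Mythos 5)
Your proposal takes essentially the same route as the paper: you set up diagram \eqref{vanishing_diagram} with $\overline{X}=\overline{Q}_r$, $\overline{U}=\overline{Q}_r\setminus\overline{Q}_{r-1}$, $\overline{Y}=\overline{Q}_{r-1}$, and apply Lemma \ref{criterion_t_exactness_2}. Two small points are worth flagging.

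First, the detour through relative purity (rewriting $c_r^*$ as $c_r^![-2r](-r)$) is unnecessary and is where your ``shift bookkeeping'' worry comes from. The closed-open factorization $\overline{l}_{r,r-1}=\iota_r\circ\overline{j}_r$ gives directly $\overline{l}_{r,r-1!}\overline{l}_{r,r-1}^{*}=\iota_{r*}\overline{j}_{r!}\overline{j}_r^{!}\iota_r^*$ with no shift and no twist; combined with the identity $\overline{\pi}_r^{\dagger}=\iota_r^*\overline{\pi}^{\dagger}[r]$ (which follows purely from the fact that the relative dimension of $\overline{Q}_r/\bbP$ is $\dim\textup{Fl}+r$, not from purity), this yields exactly the paper's Equation \eqref{flag_t_exactness_iso} and the $[r]$ in the statement. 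No Tate twist ever enters.

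Second, your verification of condition (e) in the affine case is muddled and the alternative you hint at does not address it: (e) requires $\overline{\pi}_r^{\vee}\circ\phi_{Q_r}\colon Q_r\to\textup{Fl}$ to be affine (not the map on the open stratum, which is (d)). The correct observation is that $Q_r=\overline{Q}_r\times_{\bbP}Z$ sits as a closed subscheme of $Z\times_S\textup{Fl}$, and $Z\times_S\textup{Fl}\to\textup{Fl}$ is affine precisely because $Z/S$ is affine; the composite is therefore affine. Note also that quasi-finite does not imply affine, so the parenthetical ``hence affine after the relevant reductions'' should be dropped — your self-correction ``actually one needs $Z/S$ affine'' is the right conclusion.
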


\begin{proof}
    
    We denote by $Q_{r}$ (resp. $l_{r}$) the base change of $\overline{Q}_{r}$ (resp. $\overline{l}_{r}$) along $Z \times_S \textup{Fl} \to \bbP \times_S \textup{Fl}$. We have a diagram

    \begin{equation}\label{flag_t_exactness_diagram}
  \begin{tikzcd}[row sep=normal, column sep=normal]
      & Q_{r} \backslash Q_{r-1}  \arrow[r, hook, "j_{r}"] \arrow[d] & Q_{r} \arrow[d,"\phi_{Q_{r}}"] & Q_{r-1} \arrow[l, hook', "i_{r-1}"'] \arrow[d]\\
      & \overline{Q}_{r} \backslash \overline{Q}_{r-1} \arrow[r, hook, "\overline{j}_{r}"] & \overline{Q}_{r} \arrow[dr,"\overline{\pi}_{r}^{\vee}"] \arrow[dl,"\overline{\pi}_{r}"'] & \overline{Q}_{r-1} \arrow[l, hook', "\overline{i}_{r-1}"'] \\
      Z \arrow[r,"\phi"] & \bbP & & \textup{Fl},
  \end{tikzcd}
\end{equation}

which satisfies conditions (a)-(d) from \S \ref{vanishing_lemma}. Moreover, if $Z/S$ is affine, then so is $\overline{\pi}^{\vee}_{r} \circ \phi_{Q_{r}}$ and hence the diagram also satisfies condition (e) from \S \ref{vanishing_lemma}. Now note that 
\begin{equation}\label{flag_t_exactness_iso}
\textup{Gr}^{r}_{\textup{F}}\circ \phi_{*}[r]=\overline{\pi}^{\vee}_{*}\overline{l}_{r,r-1!}\overline{l}_{r,r-1}^{*}\overline{\pi}^{\dagger}\phi_{*}\KK[r]=\overline{\pi}_{r*}^{\vee}\overline{j}_{r!}\overline{j}_{r}^{!}\overline{\pi}_{r}^{\dagger}\phi_{*}\KK.
\end{equation}

The first equality holds by definition of $\textup{Gr}^r_F$, while the second follows from the fact that $\overline{\pi}_{r}^{\dagger}=\overline{\pi}^{\dagger}[r]$, owing to $Q_{r} \hookrightarrow Q_{0}$ being a regular immersion of codimension $-r$ (see \S \ref{Flag_Variety_Notation}, (d)). The last equality combined with Lemma \ref{criterion_t_exactness_2} implies the proposition.

\end{proof}

\section{Brylinski-Radon transformation with values in $D^b_c(\bG(d))$}\label{Brylinski-Radon}

In this section, we use Proposition \ref{flag_t_exactness} to derive consequences for the usual Brylinski-Radon transform (\cite[I, \S V]{Br},\cite[\S 2.2.2]{DS23}). These in particular will imply Theorem \ref{thm:affine-radon1}. We begin by recalling some basic properties of the Grassmannian. As before we fix a base scheme $S$ finite type over $k$ and a locally free sheaf $\sE$ of rank $N+1$ on $S$. Let $\bbP$ be the associated projective bundle. 

\subsection{Recollection about Grassmannians}\label{Grassmannians}
\begin{enumerate}[(a)]
    \item Let $\bG(d,\sE)$\footnote{As before we suppress $\sE$ from the notation when there is no scope for confusion.} be the Grassmannian scheme over $S$. More precisely, for any scheme $f \colon X \to S$, the set $\bG(d)(X)$ is the collection of rank $d+1$ locally free quotients of $f^*\sE$. $\bG(d)$ is smooth of relative dimension $(d+1)(N-d)$ over $S$. 

    \item Let $Q_{d} \subset \bbP \times_S \bG(d)$ be the incidence correspondence and $U_{d}$ its complement. The inclusion map from $Q_d$ (resp. $U_d$) into $\bbP \times_S \bG(d)$ is denoted by $i_d$ (resp. $j_d$).

    \item We denote by $\pi_{d}$ (resp. $\pi^{\vee}_{d}$) the maps from $\bbP \times_S \bG(d)$ to $\bbP$ (resp. $\bG(d)$). 
    
    \item The morphism $\pi_{d}$ is smooth and projective of relative dimension $d(N-d)$, and $\pi^{\vee}_{d}$ identifies $Q_{d}$ with a projective sub bundle of the trivial projective bundle $\bbP \times_S \bG(d)$ over $\bG(d)$. Thus $\pi^{\vee}_{d}$ is also smooth and projective of relative dimension $d$.

\end{enumerate}

\subsubsection{Definition of the Brylinski-Radon transformation}\label{def_BR}
Using the notations as above, we have the Brylinski-Radon transform \cite[I, \S V]{Br} from $D^b_c(\bbP)$ to $D^b_c(\bG(d))$,

\begin{equation}\label{Radon}
    \cR_{S,d}:=\pi^{\vee}_{d*}i_{d*}i^{*}_{d}\pi_{d}^{\dagger}[d-N].
\end{equation}

For this article it would be useful to study the following (modified) Brylinski-Radon transforms,

\begin{equation}\label{!Radon}
    \cR_{S,d!}:=\pi^{\vee}_{d*}j_{d!}j^{!}_{d}\pi_{d}^{\dagger},
\end{equation}
\noindent

\noindent and

\begin{equation}\label{*Radon}
     \cR_{S,d*}:=\pi^{\vee}_{d*}j_{d*}j^{*}_{d}\pi_{d}^{\dagger}.
\end{equation}

\subsubsection{Basic properties of the Brylinski-Radon transformation}\label{basic_prop_BR}
\begin{enumerate}[(a)]
    \item Note that (upto Tate twists)  $\cR_{d*}$\footnote{As before we suppress $S$ from the notation when there is no scope for confusion.} is dual to $\cR_{d!}$. Also $\cR_{d}$ is self-dual (up to Tate twists\footnote{Tate twists will only be relevant in \S 6, where we shall keep track of them.}).

    \item The triple $(Q_{d},\bbP \times_S \bG(d),U_d)$ gives rise to a triangle 
\begin{center}
    $\xymatrix{i_{d*}i_{d}^*[-1] \ar[r] & j_{d!}j_{d}^! \ar[r] & 1 \ar^{+1}[r]&}$,
\end{center}
\noindent
and hence to a triangle on $\bG(d)$
\begin{equation}\label{basic_triangle}
    \xymatrix{\cR_{d}\ar[r] & \cR_{d!}[d-N+1] \ar[r] & \pi^{\vee}_{d*}\pi^{\dagger}_{d}[d-N+1] \ar^-{+1}[r] &}.
\end{equation}
    
\end{enumerate}

\subsection{$t$-exactness properties of $\cR_{d?}\circ \phi_{*}$ }
 We use Proposition \ref{flag_t_exactness} to deduce Proposition \ref{Grass_Rad_Corollary} below. This is then used in \S \ref{P_Dec(F)_Coho} to give a proof of $\textup{P}=\textup{Dec(F)}$ at the level of cohomology. Later in \S \ref{d_N-1}, we will see further applications of Proposition \ref{Grass_Rad_Corollary}, in the case $d=N-1$ (i.e.~the usual Radon transform).

\begin{prop}\label{Grass_Rad_Corollary}
    Let $\phi \colon Z \to \bbP$ be a quasi-finite morphism. Then 
    \begin{enumerate}[(a)]
        \item  $\cR_{d!}[d-N+1] \circ \phi_{*}$ is left $t$-exact.
        \item  Moreover if $Z/S$ is affine, $\cR_{d!} \circ \phi_{*}$ and $\cR_{d} \circ \phi_{*}$ are right $t$-exact.
    \end{enumerate}
\end{prop}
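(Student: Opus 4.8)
The strategy is to reduce both statements to the graded-piece result, Proposition~\ref{flag_t_exactness}, by comparing the Brylinski-Radon transform $\cR_{d}$ on the Grassmannian with the functors $\textup{F}^{r}$ and $\textup{Gr}^{r}_{\textup{F}}$ on the flag bundle. First I would introduce the natural forgetful map $p \colon \textup{Fl} \to \bG(d)$ remembering only the rank-$(d+1)$ quotient of $\sE$; this is a flag-bundle map, hence smooth and proper with geometrically connected fibers, so $p_{*} = p_{!}$ is $t$-exact up to a shift (indeed $p^{\dagger}$ is $t$-exact and $p$ proper smooth, so $p_{*}[\dim p]$ is $t$-exact). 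Under $p$ one has $\overline{Q}_{-N+d} \times_{\bG(d)} \textup{Fl}$, pulled back to $\bbP \times_S \textup{Fl}$, agreeing with $Q_{d} \times_{\bG(d)} \textup{Fl}$ (the $d$-plane in the flag is exactly the universal $d$-plane of $\bG(d)$), and smooth base change along the cartesian square relating $\bbP\times_S\textup{Fl}$, $\bbP\times_S\bG(d)$, $\textup{Fl}$, $\bG(d)$ gives a natural identification $p^{*}\cR_{d!}(-) \simeq \overline{\pi}^{\vee}_{*}\overline{l}_{-N+d-1\,!}\overline{l}_{-N+d-1}^{!}\overline{\pi}^{\dagger}(-)[\text{shift}] = \textup{F}^{-N+d}(-)[\text{shift}]$, i.e. $p^{*}\circ\cR_{d!}$ is, up to shift, the single filtered piece $\textup{F}^{-N+d}$.

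\textbf{Main steps.} (1) Set up the cartesian diagram and verify the base-change identity $p^{*}\cR_{d!}\phi_{*} \simeq \textup{F}^{-N+d}\circ\phi_{*}$ up to the appropriate shift, and similarly relate $\cR_{d}\circ\phi_*$ to the telescope of the $\textup{F}^{r}$'s (using the triangle~\eqref{basic_triangle} and noting $\cR_{d}$ sits in a triangle with $\cR_{d!}[d-N+1]$ and $\pi^{\vee}_{d*}\pi^{\dagger}_{d}[d-N+1]$). (2) Use that $\textup{F}^{-N-1} = \overline{\pi}^{\vee}_{*}\overline{\pi}^{\dagger}$ and that, by the triangles~\eqref{basic_Gr_triangle}, $\textup{F}^{r}$ is an iterated extension of the $\textup{Gr}^{r'}_{\textup{F}}$ for $r' \le r$ together with $\textup{F}^{-N-1}$; since Proposition~\ref{flag_t_exactness} tells us each $\textup{Gr}^{r}_{\textup{F}}\circ\phi_{*}[r]$ is left $t$-exact (resp.\ $t$-exact when $Z/S$ affine), and $\overline{\pi}^{\vee}_{*}\overline{\pi}^{\dagger}\circ\phi_{*}$ is right $t$-exact when $Z/S$ is affine (Artin vanishing: $\overline{\pi}^{\vee}$ is proper and the source is affine over $\textup{Fl}$), assemble the bounds on $\textup{F}^{r}\circ\phi_{*}$ by d\'evissage along these triangles, keeping careful track of the shifts $[r]$. (3) Apply $p^{*}$ conservativity: since $p$ is smooth surjective with connected fibers, $p^{*}$ (up to shift, $p^{\dagger}$) is $t$-exact and conservative on the perverse heart, so left/right $t$-exactness of $\cR_{d!}[d-N+1]\circ\phi_{*}$ (resp.\ $\cR_{d}\circ\phi_{*}$, $\cR_{d!}\circ\phi_{*}$) follows from the same property after applying $p^{*}$, which we have just established on the flag bundle. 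For part~(b), the right $t$-exactness of $\cR_{d!}\circ\phi_{*}$ and $\cR_{d}\circ\phi_{*}$ when $Z/S$ is affine comes by combining the left $t$-exactness from~(a) with the $t$-exactness of the graded pieces and of $\overline{\pi}^{\vee}_{*}\overline{\pi}^{\dagger}\circ\phi_*$ on the affine side, again transported down via $p^{*}$.

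\textbf{Anticipated obstacle.} The bookkeeping of shifts is the delicate part: $\textup{Gr}^{r}_{\textup{F}}\circ\phi_{*}[r]$ carries a shift by $r$ that varies with the piece, while $\cR_{d!}$ has a global shift $[d-N+1]$, and the codimension of $\overline{Q}_{r}$ inside $\overline{Q}_{0}$ is $-r$; I will need to check that, when $p^{*}$ identifies $\cR_{d!}$ with the filtered piece at level $r = -(N-d)$, the shift $[d-N]$ inside the definition~\eqref{Radon} of $\cR_{S,d}$ versus $[d-N+1]$ in~\eqref{!Radon} is exactly compensated by the regular-immersion shift $\overline{\pi}_{r}^{\dagger} = \overline{\pi}^{\dagger}[r]$ used in~\eqref{flag_t_exactness_iso}. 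A second point requiring care is that the d\'evissage in step~(2) only gives one-sided $t$-exactness for $\textup{F}^{r}$ in general (left $t$-exactness of the graded pieces plus the fact that extensions of left $t$-exact functors are left $t$-exact); one must check this is enough to conclude after applying $p^*$, and that nothing is lost in passing from the flag bundle back to the Grassmannian since $p^*$ is exact and conservative. I expect no genuinely new geometric input beyond Proposition~\ref{flag_t_exactness}; the content is entirely in the compatibility of the two incidence correspondences under $\textup{Fl} \to \bG(d)$ and the shift arithmetic.
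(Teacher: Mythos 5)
Your overall strategy is the paper's: pull back along $f_d \colon \textup{Fl} \to \bG(d)$ to reduce to the flag bundle, identify $f_d^{\dagger}(\cR_{d!}\circ\phi_*)$ with a single level of the filtration $\textup{F}^{\bullet}$ (up to shift), and run a d\'evissage using the triangles~\eqref{basic_Gr_triangle} together with the graded-piece exactness from Proposition~\ref{flag_t_exactness}. For part~(a) and the $\cR_{d!}\circ\phi_*$ half of part~(b) your outline is correct and matches the paper's descending induction on $r$, starting from the base case $r=0$ where $\textup{F}^{0}=\textup{Gr}^{0}_{\textup{F}}$. (Minor slips: the identification is $f_d^{\dagger}(\cR_{d!}\circ\phi_*) = \textup{F}^{-N+d+1}\circ\phi_*$, not $\textup{F}^{-N+d}$, and $\textup{F}^{r}$ is built from the $\textup{Gr}^{r'}_{\textup{F}}$ with $r' \geq r$, not $r' \leq r$; you flagged the bookkeeping so I read these as typos.)

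The genuine gap is in the $\cR_d\circ\phi_*$ half of part~(b). You propose to deduce its right $t$-exactness from the triangle~\eqref{basic_triangle}, i.e.\ from $\cR_d \to \cR_{d!}[d-N+1] \to \pi^{\vee}_{d*}\pi^{\dagger}_d[d-N+1]$, combined with the right $t$-exactness of $\cR_{d!}\circ\phi_*$ and of $\pi^{\vee}_{d*}\pi^{\dagger}_d\circ\phi_*$. This does not close: right $t$-exactness of $\cR_{d!}\circ\phi_*$ only gives $\cR_{d!}[d-N+1]\phi_*\KK \in {}^p\cD^{\leq N-d-1}$ for $\KK \in {}^p\cD^{\leq 0}$, and similarly for the third term; for $d < N-1$ these bounds are strictly worse than $\leq 0$, so the rotated triangle does not force $\cR_d\phi_*\KK \in {}^p\cD^{\leq 0}$. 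Put differently, the global shift $[d-N+1]$ degrades the bound on every piece of that telescope, and this loss is not recovered by Artin vanishing. What actually works (and is what the paper does) is an induction on $d$: using the stratification $\overline{Q}_{-N+d-1} \hookrightarrow \overline{Q}_{-N+d} \hookleftarrow \overline{Q}_{-N+d}\setminus\overline{Q}_{-N+d-1}$ and the identity~\eqref{flag_t_exactness_iso}, one produces, after applying $f_d^{\dagger}$ resp.\ $f_{d-1}^{\dagger}$, an exact triangle whose first term is a single graded piece $\textup{Gr}^{r}_{\textup{F}}\circ\phi_*$, whose middle term is $f_d^{\dagger}(\cR_d\circ\phi_*)$, and whose third term is $f_{d-1}^{\dagger}(\cR_{d-1}\circ\phi_*)$. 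Right $t$-exactness of the graded piece (from Proposition~\ref{flag_t_exactness}) plus the inductive hypothesis for $d-1$ then gives the claim, with base case $d=0$ trivial. Your sketch, as written, does not produce this triangle and does not carry out an induction on $d$, so this step needs to be supplied before the argument is complete.
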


\begin{proof}

 Let $f_d \colon \textup{Fl} \to \bG(d)$ be the projection map. This is a smooth and proper morphism and we have a commutative diagram (using notations from \S \ref{Flag_Variety_Notation}) 

\begin{equation}\label{Grass_Rad_Corollary_diagram}
\begin{tikzcd}[row sep=normal, column sep=normal]
   \overline{U}_{-N+d} \arrow[r,hook,"\overline{l}_{-N+d}"] \arrow[d] & \bbP \times_S \textup{Fl} \arrow[d,"1\times_S f_d"] \arrow[r] & \textup{Fl} \arrow[d,"f_d"] \\
    U_{d} \arrow[r,hook,"j_{d}"] & \bbP \times_S \bG(d) \arrow[r] & \bG(d).
\end{tikzcd}
\end{equation}  

Since $f_d$ is a cover in the smooth topology, thus by \cite[Proposition 4.2.5]{BBDG18} the $t$-exactness (left or right) of $\cR_{d?}\circ \phi_{*}$ (or its shifts) can be checked after applying $f_d^{\dagger}$. Now by smooth base change, we get (see Equation (\ref{filt_defn})) 
\begin{equation}\label{pullback_Gra_Rad}
f_{d}^\dagger \left ( \cR_{d!}\circ \phi_{*} \right )=\textup{F}^{-N+d+1}\circ \phi_{*}.
\end{equation}

\textit{Case: $\phi$ quasi-finite}\newline
Using the isomorphism (\ref{pullback_Gra_Rad}), to prove (a) it suffices to show that for any $0 \leq d \leq N-1$, $\textup{F}^{-N+d+1}\circ \phi_{*}[-N+d+1]$ is left $t$-exact. For every integer $r$ and sheaf $\KK$ on $Z$, we have an exact triangle

 \[
 \small \xymatrix{\textup{F}^{r}\circ \phi_{*}(\KK) [r] \ar[r] & \textup{Gr}_\textup{F}^{r}\circ \phi_{*}(\KK) [r] \ar[r] & \textup{F}^{r+1}\circ \phi_{*}(\KK)[r+1] \ar^-{+1}[r]&}.
\]

Thus $\textup{F}^{r}\circ \phi_{*}[r]$ is left $t$-exact if $\textup{F}^{r+1}\circ \phi_{*}[r+1]$ and $\textup{Gr}_\textup{F}^{r}\circ \phi_{*}[r]$ are so. By Proposition \ref{flag_t_exactness} and by descending induction (on $r$) we are reduced to showing the left $t$-exactness for the case $r=0$ (or $d=N-1$), in which case $\textup{F}^{0}\circ \phi_{*}=\textup{Gr}_\textup{F}^{0}\circ \phi_{*}$, and the left $t$-exactness follows from Proposition \ref{flag_t_exactness}.

\textit{Case: $\phi$ quasi-finite and $Z/S$ affine} \newline
Using the isomorphism (\ref{pullback_Gra_Rad}) as before, to show right $t$-exactness of $\cR_{d!} \circ \phi_{*}$ for any $d$, it suffices to show the right $t$-exactness of $\textup{F}^{r}\circ \phi_{*}$ for any $r$. For any sheaf $\KK$ on $Z$ we have an exact triangle

\[
 \small \xymatrix{\textup{F}^{r+1}\circ \phi_{*}(\KK)[r] \ar[r] & \textup{F}^{r}\circ \phi_{*}(\KK)[r] \ar[r] & \textup{Gr}_\textup{F}^{r}\circ \phi_{*}(\KK)[r] \ar^-{+1}[r]&}.
\]

By Proposition \ref{flag_t_exactness}, $\textup{Gr}_\textup{F}^{r}\circ \phi_{*}[r]$ is $t$-exact, and hence $\textup{Gr}_\textup{F}^{r}\circ \phi_{*}[r]$ is right $t$-exact for $r \leq 0$. Hence for $r \leq -1$, $\textup{F}^{r}\circ \phi_{*}[r]$ is right $t$-exact if $\textup{F}^{r+1}\circ \phi_{*}[r]$ is so. By descending induction on $r$ we are reduced to the case $r=0$, in which case it follows from Proposition \ref{flag_t_exactness}.

Now we show the right $t$-exactness of $\cR_{d} \circ \phi_{*}$. The statement is trivially true for $d=0$. Now suppose $d \geq 1$. Thus it is enough to show that $f_d^{\dagger}\cR_{d} \circ \phi_{*}$ is right $t$-exact whenever $f_{d-1}^{\dagger}\cR_{d-1} \circ \phi_{*}$ is so. As before we have a Cartesian diagram
\begin{equation}\label{Grass_Rad_Corollary_diagram2}
\begin{tikzcd}[row sep=normal, column sep=normal]
   \overline{Q}_{-N+d} \arrow[r,hook] \arrow[d] & \bbP \times_S \textup{Fl} \arrow[d,"1\times_S f_d"] \arrow[r] & \textup{Fl} \arrow[d,"f_d"] \\
    Q_{d} \arrow[r,hook,"i_{d}"] & \bbP \times_S \bG(d) \arrow[r] & \bG(d).
\end{tikzcd}
\end{equation}

Hence using the isomorphism in (\ref{flag_t_exactness_iso}), for any sheaf $K$ on $Z$ and $r=d-N+1\leq 0$, we obtain an exact triangle 

\[
 \small \xymatrix{\textup{Gr}_\textup{F}^{r}\circ \phi_{*}(\KK) \ar[r] & f_d^{\dagger}(\cR_{d} \circ \phi_{*}(\KK))\ar[r] & f_{d-1}^{\dagger}(\cR_{d-1} \circ \phi_{*}(\KK)) \ar^-{+1}[r]&}.
\]

We have already shown that $\textup{Gr}_\textup{F}^{r}\circ \phi_{*}$ is right $t$-exact (for $r \leq 0$) and thus we are done. \end{proof}

\begin{rem}\label{duality_remark_GRass}
    Using \S \ref{basic_prop_BR}, (a) we can now deduce the dual statements. More precisely $R_{d*}\circ \phi_{!}[N-d-1]$ is right $t$-exact when $\phi$ is quasi-finite. Moreover when $Z/S$ is affine, $R_{d*} \circ \phi_{!}$ and $R_d\circ \phi_{!}$ are left $t$-exact.
\end{rem}

\subsection{P=Dec(F) on cohomology}\label{P_Dec(F)_Coho}

In this section, we work over $S=\Spec(k)$. For ease of notation, we shall denote $\cR_{d!}$ (resp. $\cR_{d}$) simply by $R_{d!}$ (resp. $R_{d}$). Now we shall prove Theorem \ref{thm:affine-radon1} using Proposition \ref{Grass_Rad_Corollary}.

\begin{proof}\label{proof_thm_affine_Rad}
For $d \leq 0$, \cite[4.2.4]{BBDG18} implies that $\mathrm{H}^{d-N+1+i}(Z, {}^p\tau_{>i}\KK)=0$ and hence $P^{-i}\mathrm{H}^{d-N+1+i}(Z, \KK)=\mathrm{H}^{d-N+1+i}(Z, \KK)$. Moreover when $Z$ is affine, Artin vanishing implies that $\mathrm{H}^{d-N+1+i}(Z, {}^p\tau_{\leq i}\KK)=0$ for $d \geq N$ and hence $P^{-i}\mathrm{H}^{d-N+1+i}(Z, \KK)=0$. Thus Theorem \ref{thm:affine-radon1} is trivially true when $d\leq0$ or $d \geq N$, and consistent with our convention for $\bG(d)$. Now we assume $0<d<N$.

Applying perverse cohomology functor to the triangle (\ref{basic_triangle}) for the sheaf $\phi_*\KK$ and $\phi_*{}^p\tau_{ \leq i}K$, we get a commutative diagram of perverse sheaves whose rows and left column are exact

   \begin{equation*}
       \xymatrix{  \cpH^i(\cR_{d!}[d-N+1](\phi_*{}^p\tau_{>i}K)) & & \\
                   \cpH^i(\cR_{d!}[d-N+1](\phi_*K)) \ar[r] \ar[u] & \cpH^i(\pi^{\vee}_{d*}\pi_d^{\dagger}[d-N+1]\phi_*K) \ar^-{\alpha_i}[r] & \cpH^{i+1}(\cR_d(\phi_*K)) \\
                   \cpH^i(\cR_{d!}[d-N+1](\phi_*{}^p\tau_{ \leq i}K)) \ar[r] \ar[u] & \cpH^i(\pi^{\vee}_{d*}\pi_d^{\dagger}[d-N+1]\phi_*{}^p\tau_{ \leq i}K) \ar[r] \ar^-{\beta_i}[u] & \cpH^{i+1}(\cR_d(\phi_*{}^p\tau_{ \leq i}K)) \ar[u] }.
   \end{equation*}

Now Proposition \ref{Grass_Rad_Corollary} implies that the perverse sheaf $\cpH^i(\cR_{d!}[d-N+1](\phi_*{}^p\tau_{>i}K))$ vanishes, and thus $\textup{ker}(\alpha_i) \subseteq \textup{Im}(\beta_i)$. Moreover when $Z/S$ is affine, $\cpH^{i+1}(\cR_d(\phi_*{}^p\tau_{ \leq i}K))$ vanishes too and hence $\textup{ker}(\alpha_i)=\textup{Im}(\beta_i)$. Now let $V \subseteq \bG(d)$ be the locus where 
\begin{enumerate}[(a)]
    \item $\cR_{d}(\phi_*K)$ is lisse.
    \item The stalk of $\cR_d(\phi*K)[-d(N-d)]$ at any closed point $v \in V$ is isomorphic to $R\Gamma(\phi^{-1}(\Lambda_v),\KK|_{\phi^{-1}(\Lambda_v)})$.
\end{enumerate}

By constructibility of $\cR_{d}(\phi_*K)$ and generic base change \cite[Chapitre 7. Th\'eor\`eme 1.9]{SGA4.5}, $V$ contains a dense open subset and we replace $V$ with that open subset. Since restriction to an open subset is $t$-exact we have $\textup{ker}(\alpha_i|_{V})\subseteq\textup{Im}(\beta_i|_{V})$ (resp.~$\textup{ker}(\alpha_i|_{V})=\textup{Im}(\beta_i|_{V})$ as the case may be). By smooth base change and Lemma \ref{perverse_usual}, $\textup{Im}(\beta_i|_{V})$ is the constant perverse sheaf with values in $P^{-i}\mathrm{H}^{i+d-N+1}(Z, \KK)$. On the other hand by (a) and (b) above, $\textup{ker}(\alpha_i|_{V})$ is the constant perverse sheaf with values in $\ker \left(\mathrm{H}^{i+d-N+1}(Z, \KK) \to \mathrm{H}^{i+d-N+1}(\phi^{-1}(\Lambda_v),\KK|_{\phi^{-1}(\Lambda_v)}\right)$ for any $v$ in $V$.
\end{proof}

\begin{rem}\label{main_thm_remark}
The argument above implies that $\textup{ker}(\alpha_i)\subseteq\textup{Im}(\beta_i)$ (resp. $\textup{ker}(\alpha_i)=\textup{Im}(\beta_i)$) remains valid even when the base is not necessarily a field.
\end{rem}
\subsection{The case $d=N-1$}\label{d_N-1}

We continue using the notations from the earlier sections. Now we look at the special case of $d=N-1$. We denote the Grassmannian $\bG(N-1)$ by $\bbP^{\vee}$. Under this assumption, Proposition \ref{Grass_Rad_Corollary} and Remark \ref{duality_remark_GRass} imply the following\footnote{Corollary \ref{SWL_Rad} is equivalent to the exactness of $\textup{Gr}_F^0$ in Proposition \ref{flag_t_exactness}, and thus can be readily deduced directly from Lemma \ref{criterion_t_exactness_2} as in the proof of Proposition \ref{flag_t_exactness}.}.

\begin{cor}\label{SWL_Rad}
Let $\phi \colon Z \to \bbP$ be a quasi-finite morphism. Then $\cR_{N-1!} \circ \phi_*$ is left $t$-exact and  $\cR_{N-1*} \circ \phi_!$ is right $t$-exact. Moreover, if $Z/S$ is affine then they are both $t$-exact.
\end{cor}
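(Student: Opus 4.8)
The plan is to specialize Proposition \ref{Grass_Rad_Corollary} and Remark \ref{duality_remark_GRass} to $d = N-1$ and read off what the shifts become. First, I would observe that when $d = N-1$ the quantity $d - N + 1$ equals $0$, so the shift $[d-N+1]$ disappears entirely. Thus Proposition \ref{Grass_Rad_Corollary}(a) directly says that $\cR_{N-1!} \circ \phi_*$ is left $t$-exact for $\phi$ quasi-finite, with no shift needed. Dually, Remark \ref{duality_remark_GRass} says $\cR_{d*} \circ \phi_! [N-d-1]$ is right $t$-exact; again $N-d-1 = 0$ when $d = N-1$, so $\cR_{N-1*} \circ \phi_!$ is right $t$-exact. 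This gives both left/right $t$-exactness statements in the quasi-finite case immediately.

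For the affine case, I would invoke the ``moreover'' clauses. Proposition \ref{Grass_Rad_Corollary}(b) gives right $t$-exactness of $\cR_{d!} \circ \phi_*$ (and of $\cR_d \circ \phi_*$) when $Z/S$ is affine; combined with the left $t$-exactness already established, $\cR_{N-1!} \circ \phi_*$ is $t$-exact. Symmetrically, Remark \ref{duality_remark_GRass} gives left $t$-exactness of $\cR_{d*} \circ \phi_!$ when $Z/S$ is affine, which together with the right $t$-exactness above yields $t$-exactness of $\cR_{N-1*} \circ \phi_!$.

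There is essentially no obstacle here — the corollary is a verbatim bookkeeping consequence of the two preceding results once one notes the shifts vanish at $d = N-1$. The only thing worth spelling out, as the footnote already hints, is that one could alternatively bypass the inductive machinery of Proposition \ref{Grass_Rad_Corollary} and derive this directly: $\cR_{N-1!} \circ \phi_*$ pulls back under $f_{N-1}^\dagger$ (the projection $\textup{Fl} \to \bbP^\vee$, which here is an isomorphism, or at worst a smooth cover) to $\textup{F}^0 \circ \phi_*$, and since $\textup{F}^0 = \textup{Gr}^0_{\textup{F}}$ by definition (as $\textup{F}^1 = 0$), the claim is precisely the $r = 0$ case of Proposition \ref{flag_t_exactness}, hence of Lemma \ref{criterion_t_exactness_2}. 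I would include a one-line remark to this effect but base the main argument on the already-proven Proposition \ref{Grass_Rad_Corollary} for brevity.
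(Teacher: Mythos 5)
Your proposal is correct and follows exactly the paper's own route: the corollary is stated as an immediate specialization of Proposition \ref{Grass_Rad_Corollary} and Remark \ref{duality_remark_GRass} at $d=N-1$, where the shifts $[d-N+1]$ and $[N-d-1]$ vanish, and your observation that one could alternatively derive it directly from the $r=0$ case of Proposition \ref{flag_t_exactness} (via Lemma \ref{criterion_t_exactness_2}) is precisely what the paper's footnote records. The only micro-inaccuracy is describing $f_{N-1}\colon \textup{Fl}\to\bbP^\vee$ as possibly an isomorphism; it is a smooth proper surjection but an isomorphism only when $N=1$, though your hedge ``at worst a smooth cover'' covers this and nothing in the argument depends on it.
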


For the next two results, we work over $S=\Spec(k)$ and use notations from \S \ref{P_Dec(F)_Coho}.

\begin{cor}\label{Deligne_WL}
    Let $\phi: Z \to \bbP$ be a quasi-finite. Let $\KK$ be an upper semi-perverse sheaf on $Z$. Then the cohomology groups $\mathrm{H}^i(Z,j_{!}j^!\KK)=0$ vanish for $i < 0$. Here $j \colon V \hookrightarrow Z$ is the inverse image (under $\phi$) of a generic hyperplane section complement. 
\end{cor}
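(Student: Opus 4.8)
The plan is to deduce the vanishing from the left $t$-exactness of $\cR_{N-1!}\circ\phi_*$ (Corollary \ref{SWL_Rad}), after identifying the stalk of $\cR_{N-1!}(\phi_*\KK)$ at a generic closed point of $\bbP^\vee$ with $R\Gamma(Z,j_!j^!\KK)[N]$. Since $\KK$ is upper semi-perverse and $\cR_{N-1!}\circ\phi_*$ is left $t$-exact, $\cR_{N-1!}(\phi_*\KK)$ is upper semi-perverse on $\bbP^\vee$; the content of the argument is to unwind this at a generic point.

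First I would compute the stalk. Write $\bbP^\vee=\bG(N-1)$; for a closed point $v$ let $\Lambda_v$ be the corresponding hyperplane, $W_v:=\phi^{-1}(\Lambda_v)$ with closed immersion $i\colon W_v\hookrightarrow Z$, so that $j\colon V:=\phi^{-1}(\bbP\setminus\Lambda_v)\hookrightarrow Z$ (the $V$ of the statement, once $v$ is generic) is the complementary open immersion, and let $j_v\colon\bbP\setminus\Lambda_v\hookrightarrow\bbP$, $\iota_v\colon\Lambda_v\hookrightarrow\bbP$. Using that $\pi^\vee_{N-1}$ is proper, that $j_{N-1!}$ and $j_{N-1}^{!}=j_{N-1}^{*}$ commute with base change, and that $\pi_{N-1}$ restricts to an isomorphism $\bbP\times\{v\}\xrightarrow{\sim}\bbP$ with $\pi_{N-1}^{\dagger}=\pi_{N-1}^{*}[N]$, proper base change along the fibre $\bbP\times\{v\}$ of $\pi^\vee_{N-1}$ gives $\cR_{N-1!}(\phi_*\KK)_v\simeq R\Gamma(\bbP,j_{v!}j_v^{!}\phi_*\KK)[N]$ for every $v$. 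Applying $R\Gamma(\bbP,-)$ to the excision triangle $j_{v!}j_v^{!}\phi_*\KK\to\phi_*\KK\to\iota_{v*}\iota_v^{*}\phi_*\KK\xrightarrow{+1}$ and using $R\Gamma(\bbP,\phi_*\KK)=R\Gamma(Z,\KK)$, I obtain a distinguished triangle
\[
\cR_{N-1!}(\phi_*\KK)_v[-N]\;\longrightarrow\;R\Gamma(Z,\KK)\;\longrightarrow\;R\Gamma(\Lambda_v,\iota_v^{*}\phi_*\KK)\;\xrightarrow{+1}
\]
whose middle arrow is the restriction map.

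Next I would bring in genericity. On $Z$ the excision triangle $j_!j^!\KK\to\KK\to i_*i^*\KK\xrightarrow{+1}$ (recall $j^!=j^*$) gives, after $R\Gamma(Z,-)$, a triangle with first term $R\Gamma(Z,j_!j^!\KK)$ and middle arrow the restriction $R\Gamma(Z,\KK)\to R\Gamma(W_v,\KK|_{W_v})$; the latter factors, via the base change morphism $\iota_v^{*}\phi_*\KK\to(\phi|_{W_v})_*(\KK|_{W_v})$, through the restriction $R\Gamma(Z,\KK)\to R\Gamma(\Lambda_v,\iota_v^{*}\phi_*\KK)$ appearing in the triangle above. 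By generic base change \cite[Chapitre 7, Th\'eor\`eme 1.9]{SGA4.5} applied to the universal incidence correspondence $Z\times_{\bbP}Q_{N-1}\to\bbP^\vee$ (whose fibre over $v$ is $W_v$) — this is the stalk computation already carried out in the proof of Theorem \ref{thm:affine-radon1}, condition (b) with $d=N-1$ — there is a dense open $\Omega\subseteq\bbP^\vee$ over which $\cR_{N-1!}(\phi_*\KK)$ is lisse and over which the above base change map becomes an isomorphism after $R\Gamma(\Lambda_v,-)$. For $v\in\Omega$ the two triangles then agree in their last two terms and connecting maps, whence $\cR_{N-1!}(\phi_*\KK)_v\simeq R\Gamma(Z,j_!j^!\KK)[N]$.

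It remains to translate perversity into an amplitude bound. Fix $v\in\Omega$. As $\Omega$ is smooth of dimension $N$ and $\cR_{N-1!}(\phi_*\KK)|_\Omega$ is lisse and upper semi-perverse, Lemma \ref{perverse_usual} gives $\ccH^{k}(\cR_{N-1!}(\phi_*\KK)|_\Omega)=0$ for $k<-N$, hence $\ccH^{k}(\cR_{N-1!}(\phi_*\KK)_v)=0$ for $k<-N$. Combined with the identification of the previous step, $\mathrm{H}^{i}(Z,j_!j^!\KK)=\ccH^{i-N}(\cR_{N-1!}(\phi_*\KK)_v)=0$ for $i<0$, which is the claim. (Note this argument needs neither $j$ nor $Z$ to be affine; the affine case is the sharper Theorem \ref{SWL_Intro}(b), which would instead follow from Artin vanishing on $Z$ directly.) The step I expect to be the main obstacle is the previous one: verifying that for generic $v$ the stalk of $\cR_{N-1!}(\phi_*\KK)$ computes $R\Gamma(Z,j_!j^!\KK)[N]$ and not its a priori different compactly supported analogue. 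The subtlety is that base change along the closed immersion $\iota_v\colon\Lambda_v\hookrightarrow\bbP$ for the non-proper morphism $\phi$ is available only generically, and it must be extracted from generic base change on the universal family, exactly the input used for Theorem \ref{thm:affine-radon1}.
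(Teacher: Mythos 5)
Your proof is correct and follows the paper's strategy: left $t$-exactness of $\cR_{N-1!}\circ\phi_*$ from Corollary \ref{SWL_Rad}, a generic identification of the stalk with $R\Gamma(Z,j_!j^!\KK)[N]$, and then Lemma \ref{perverse_usual} to convert upper semi-perversity of the lisse restriction into the amplitude bound. The one place where you take a detour is the stalk identification: you first apply proper base change along $\pi^\vee_{N-1}$ to get $R\Gamma(\bbP, j_{v!}j_v^!\phi_*\KK)[N]$, and then must compare the excision triangle on $\bbP$ (via $\iota_v$) with the one on $Z$ (via $i$) using generic base change for $\phi_*$ restricted to $\Lambda_v$. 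The paper gets this more directly from Corollary \ref{R_!_description}, which rewrites $\cR_{N-1!}\circ\phi_*$ as a pushforward along $Z\times\bbP^\vee\to\bbP^\vee$ of $\widetilde{j}_!\widetilde{j}^!$ applied to the pullback of $\KK$; since $\widetilde{j}_!$ commutes with arbitrary base change, a single application of Deligne's generic base change to that pushforward immediately gives $\cR_{N-1!}(\phi_*\KK)_v[-N]\simeq R\Gamma(Z,j_{v!}j_v^!\KK)$ for generic $v$, with no comparison of triangles needed. Both routes are valid; the $\cR_!$-description route circumvents precisely the subtlety you flag at the end.
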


\begin{proof}
    By constructibility and Deligne's generic base change, we may choose an open dense subset $V \hookrightarrow \bbP^{\vee}$ such that $\cR_{N-1!}(\phi_*\KK)$ is lisse and the stalk of $\cR_{N-1!}(\phi_*\KK)[-N]$ at a closed point $v \in V$ is isomorphic to $R\Gamma(Z,j_{v!}j_v^!\KK)$. Here $j_v \colon Z \backslash \phi^{-1}\Lambda_v \hookrightarrow Z$, is the hyperplane complement. Now suppose $\KK$ was upper semi-perverse on $Z$, then by Corollary \ref{SWL_Rad}, $\cpH^{i}(\cR_{N-1!}\circ\phi_*(\KK))=0$ for $i <0 $ and this vanishing continues to hold when restricted to $V$. Thus by Lemma \ref{perverse_usual}, $\mathrm{H}^i(Z,j_{v!}j_v^!\KK)=0$, for $i <0$. 
\end{proof}

\subsubsection{Proof of Theorem \ref{SWL_Intro}}\label{proof_SWL}

\begin{proof}
The argument for (b) is similar to Corollary \ref{Deligne_WL}, where one has exactness of $\cR_{N-1!} \circ \phi_*$ (as opposed to only left $t$-exactness of $\cR_{N-1!}\circ \phi_*$), thanks to $Z/k$ being affine. Now shall we prove (a).

Let $\cR_{\bbP^{\vee},!} \colon D^{b}_c(\bbP \times_k \bbP^{\vee}) \to D^b_c(\bbP^{\vee} \times_k \bbP^{\vee})$ be the !-Radon transform over the base $\bbP^{\vee}$ (see Equation (\ref{!Radon})). 
Similarly let $\cR_{\bbP,*} \colon D^{b}_c(\bbP \times_k \bbP) \to D^b_c(\bbP \times_k \bbP^{\vee})$ be the *-Radon transform over the base $\bbP$ (see Equation (\ref{*Radon})).

Let $j_{N-1} \colon U_{N-1} \hookrightarrow \bbP \times_k \bbP^{\vee}$ be the complement of the universal hyperplane section (see \S \ref{Grassmannians},(b)). Let $\Delta \colon \bbP \hookrightarrow \bbP \times_k \bbP$ be the diagonal embedding. It follows from the definition of $\cR_{\bbP,*}$, that 
\[
\cR_{\bbP,*} \circ \Delta_*=j_{N-1*}j_{N-1}^*\cR_{\bbP,*} \circ \Delta_*,
\]

\noindent and thus

\[
\cR_{\bbP^{\vee},!} \circ \cR_{\bbP,*} \circ \Delta_*=(\cR_{\bbP^{\vee},!} \circ j_{N-1*}) \circ \left (j_{N-1}^* \circ \left (\cR_{\bbP,*} \circ \Delta_* \right ) \right ).
\]

Now Corollary \ref{SWL_Rad} implies that $\cR_{\bbP,*} \circ \Delta_*$ is $t$-exact. Moreover $j_{N-1}$ is obviously quasi-finite and since the composite $U_{N-1} \hookrightarrow \bbP \times_k \bbP^{\vee} \to \bbP^{\vee}$ is affine, Corollary \ref{SWL_Rad} also implies that $\cR_{\bbP^{\vee},!} \circ j_{N-1*}$ is $t$-exact. Hence implying the $t$-exactness of $\cR_{\bbP^{\vee},!} \circ \cR_{\bbP,*} \circ \Delta_*$\footnote{This statement (and the proof) is also valid over a base $S$.}.

Thus for $\phi$ a quasi-finite and affine morphism, $\cR_{\bbP^{\vee},!} \circ \cR_{\bbP,*} \circ \Delta_* \circ \phi_*$ is $t$-exact. As before using constructibility and generic base change (on $\bbP^{\vee} \times_k \bbP^{\vee}$) for $\cR_{\bbP^{\vee},!} \circ \cR_{\bbP,*} \circ \Delta_* \circ \phi_*\KK$ , we get (a).

\end{proof}

\subsection{Radon transform from $\cP(Z)$ to $\cP(\bbP^{\vee})$ is faithful}\label{inverse_Affine}Suppose we are in the situation of Corollary \ref{SWL_Rad} with $Z/S$ assumed to be affine. Thus $\cR_{N-1!} \circ \phi_*$ is $t$-exact and we can look at the induced functor (also denoted by $\cR_{N-1!} \circ \phi_*$) between $\cP(Z)$ and $\cP(\bbP^{\vee})$. Corollary \ref{SWL_Rad} implies that this is exact. However one can say more thanks to the existence of inverse Radon transform.

The existence of an inverse Radon transform implies that $\cpH^0 \circ \cR_{N-1}$ induces an equivalence of categories between the quotient categories $\cA(\bbP)$\footnote{Here $\cA(\bbP)$ is the quotient of $\cP(\bbP)$ by the Serre sub-category of perverse sheaves coming from $S$ (via $\dagger$-pullback). Similarly for $\cA(\bbP^{\vee})$.} and $\cA(\bbP^{\vee})$ \cite[Proposition 5.7]{Lau}. It follows from the triangle (\ref{basic_triangle}) that $\cpH^0 \circ \cR_!(\bbP^{\vee})=\cpH^0 \circ \cR_{N-1}$ on $\cA(\bbP)$. Thus the former also induces an equivalence of categories. Thus we have functors 

\begin{equation}\label{SWL_Rad_Dia}
    \begin{tikzcd}[column sep=large]
        \cP(Z) \arrow[r,"\phi_*"'] & \cP(\bbP) \arrow[r,"\cpH^0 \circ \cR_{N-1!}"'] \arrow[d,two heads] & \cP(\bbP^{\vee}) \arrow[d,two heads] \\
                                  &  \cA(\bbP) \arrow[r,"\sim","\cpH^0 \circ \cR_{N-1!}"'] & \cA(\bbP^{\vee}).
    \end{tikzcd}
\end{equation}

It follows from Corollary \ref{SWL_Rad} that $\cpH^0 \circ \cR_{N-1!} \circ \phi_*= \cR_{N-1!} \circ \phi_*$. Consider the diagram
\begin{equation}\label{SWL_Rad_Dia2}
  \begin{tikzcd}
      Z \arrow[r,"\phi"] \arrow[dr,"\pi_Z"'] & \bbP \arrow[d,"\pi_{\bbP}"]. \\
                          & S
  \end{tikzcd} 
\end{equation}
Let $\KK$ be a perverse sheaf such that $\cR_{N-1!} \circ \phi_*\KK$ belongs to Serre sub-category of perverse sheaves coming from $S$ on $\bbP^{\vee}$, then diagrams (\ref{SWL_Rad_Dia}) and (\ref{SWL_Rad_Dia2}) imply that 
\[
\phi_*\KK \simeq \pi_{\bbP}^{\dagger}\textup{L}
\]
\noindent for some perverse sheaf $\textup{L}$ on $\bbP$. However, by projection formula implies that 
\[
\pi_{Z*}\KK \simeq \oplus_{i=0}^{N}\textup{L}[-2i+N].
\]

Since by assumption $\pi_Z$ is affine and $\KK$ perverse, Artin vanishing implies $\pi_{Z*}\KK$ is semi-perverse and thus $\textup{L}=0$ and hence so is $\KK$. Thus for $\phi \colon Z \to \bbP$ be quasi-finite and $Z/S$ affine we have proved the following.
\begin{prop}\label{SWL_converse}
    The functors $\cR_{N-1!}\circ \phi_*$, $\cpH^0 \circ \cR_{N-1} \circ \phi_*$, $\cpH^0 \circ \cR_{N-1} \circ \phi_!$ and $\cR_{N-1*} \circ \phi_!$ from $\cP(Z)$ to $\cA(\bbP^{\vee})$ (and hence also to $\cP(\bbP^{\vee})$) are all faithful and exact.   
\end{prop}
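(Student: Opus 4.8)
The plan is to combine the exactness already obtained from the Radon transform with the existence of an inverse Radon transform, exactly as set up in the paragraphs preceding Proposition \ref{SWL_converse}. First I would observe that all four functors in the statement are exact: for $\cR_{N-1!}\circ\phi_*$ and $\cR_{N-1*}\circ\phi_!$ this is precisely Corollary \ref{SWL_Rad} under the hypothesis that $Z/S$ is affine, while for $\cpH^0\circ\cR_{N-1}\circ\phi_*$ and $\cpH^0\circ\cR_{N-1}\circ\phi_!$ it follows from the triangle (\ref{basic_triangle}), which shows that $\cpH^0\circ\cR_{N-1}$ agrees with $\cpH^0\circ\cR_{N-1!}[\,\cdot\,]$ on the quotient category $\cA(\bbP)$ (the difference term $\pi^{\vee}_{d*}\pi^{\dagger}_d$ becomes zero there), together with the fact that $\cpH^0\circ\cR_{N-1!}\circ\phi_*=\cR_{N-1!}\circ\phi_*$ already noted.

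Next I would reduce faithfulness to a single kernel computation: a functor between abelian categories that factors through $\cP(Z)\to\cA(\bbP^\vee)$ is faithful on $\cP(Z)$ provided no nonzero perverse sheaf on $Z$ is sent to an object isomorphic to one coming from $S$ under $\pi^{\dagger}$. Here one uses that $\cpH^0\circ\cR_{N-1}\colon\cA(\bbP)\to\cA(\bbP^\vee)$ is an equivalence by the inverse Radon transform (\cite[Proposition 5.7]{Lau}), so that the composite $\cP(Z)\xrightarrow{\phi_*}\cP(\bbP)\twoheadrightarrow\cA(\bbP)$ is the one whose faithfulness must be checked; and an exact functor is faithful iff it kills no nonzero object. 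So the crux is: if $\KK\in\cP(Z)$ has $\phi_*\KK\simeq\pi_{\bbP}^{\dagger}\textup{L}$ for some perverse $\textup{L}$ on $\bbP$, then $\KK=0$.

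The argument I would run for this kernel computation is the one sketched in the excerpt: apply the projection formula along $\pi_{\bbP}$ (using the diagram (\ref{SWL_Rad_Dia2}) with $\pi_Z=\pi_{\bbP}\circ\phi$) to get $\pi_{Z*}\KK=\pi_{\bbP*}\phi_*\KK=\pi_{\bbP*}\pi_{\bbP}^{\dagger}\textup{L}\simeq\bigoplus_{i=0}^{N}\textup{L}[-2i+N]$, the last isomorphism because $\pi_{\bbP*}\pi_{\bbP}^{*}\sO\simeq\bigoplus_{i=0}^N\sO[-2i](-i)$ for a projective bundle of rank $N+1$. Since $Z/S$ is affine, $\pi_Z$ is affine, so Artin vanishing (Theorem \ref{Artin_Vanishing}) forces $\pi_{Z*}\KK$ to be semi-perverse, i.e.\ concentrated in perverse degrees $\leq 0$; but the right-hand side has a summand $\textup{L}[N]$ in perverse degree $\leq -N<0$ unless $\textup{L}=0$ — more precisely, $\textup{L}$ perverse and $N\geq 1$ forces $\textup{L}[-2N+N]=\textup{L}[-N]$... one reads off from the direct sum decomposition that $\textup{L}$ must vanish, whence $\phi_*\KK=0$ and $\KK=0$ since $\phi$ is quasi-finite affine so $\phi_*$ is $t$-exact and faithful. (When $N=0$ the statement is trivial since $\bbP^\vee$ is empty-or-a-point and $\cA(\bbP^\vee)=0$; one should note this degenerate case.)

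I expect the main obstacle to be the careful handling of the quotient categories $\cA(\bbP)$, $\cA(\bbP^\vee)$: one must check that the Serre subcategory of perverse sheaves of the form $\pi^{\dagger}(-)$ is stable under the relevant operations, that Laumon's equivalence \cite[Proposition 5.7]{Lau} applies in the étale/relative setting used here (and over a base $S$), and that faithfulness on $\cP(Z)$ genuinely follows from faithfulness of the composite to $\cA(\bbP^\vee)$ — which, once the kernel computation above is in hand, amounts to the elementary fact that an exact functor between abelian categories is faithful as soon as it detects the zero object. Everything else (projection formula, cohomology of projective bundles, Artin vanishing) is routine given the results already recalled in the excerpt.
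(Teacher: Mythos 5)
Your argument is correct and follows the paper's own proof essentially verbatim (which is unsurprising, since the paper's ``proof'' is exactly the inline exposition in \S\ref{inverse_Affine} that you reference): exactness via Corollary~\ref{SWL_Rad} and the triangle~(\ref{basic_triangle}), faithfulness via Laumon's equivalence $\cA(\bbP)\simeq\cA(\bbP^{\vee})$ plus the projection-formula/Artin-vanishing kernel computation showing $\phi_*\KK\simeq\pi_{\bbP}^{\dagger}\textup{L}$ forces $\textup{L}=0$. One small inaccuracy in your aside: when $N=0$ one has $\bbP^{\vee}=\emptyset$ and $\cA(\bbP^{\vee})=0$, so the claim is not ``trivially true'' but rather degenerate (faithfulness into the zero category forces $\cP(Z)=0$); the proposition is implicitly restricted to $N\geq 1$.
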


\begin{rem}\label{SWL_Converse_Rem}
 Corollary \ref{SWL_Rad} and Proposition \ref{SWL_converse} together imply that a sheaf $\KK$ on $Z$ is perverse iff $R_{N-1!}\circ \phi_*(\KK)$ is so.
\end{rem}

\section{Applications to divisibility of Frobenius eigenvalues and Hodge level}\label{Coho_Div}

In this section we shall apply Corollary \ref{Deligne_WL} to carry out the strategy in \cite{EW22} to obtain a strengthening of Esnault-Katz's cohomological divisibility theorem \cite[Theorem 2.3, (2)]{EK05}. The projective case was handled in \cite[Theorem 1.3]{EW22}. 

\subsection{A Lefschetz theorem for affine Gysin map}
We work over an arbitrary algebraically closed field $k$. Let $\phi \colon Z \to \bbP$ be a quasi-finite map.  Let $\KK$ be a semi-perverse sheaf on $Z$.

\begin{lem}\label{affine_Gysin}
    Then for a general hyperplane section $\Lambda \subset \bbP$ the Gysin map 
    \[
    \rH_c^{r}(\phi^{-1}(\Lambda),i^!\KK) \to \rH_c^{r}(Z,\KK),
    \]
    \noindent is a surjection for $r=1$ and an isomorphism for $r>1$. Here $i \colon \phi^{-1}(\Lambda) \subset  Z$ is the closed embedding.
\end{lem}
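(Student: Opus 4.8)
The plan is to deduce Lemma~\ref{affine_Gysin} from Corollary~\ref{Deligne_WL} by duality. Recall that for the closed embedding $i \colon \phi^{-1}(\Lambda) \hookrightarrow Z$ with open complement $j \colon V \hookrightarrow Z$ (the preimage of the hyperplane section complement), there is an exact triangle
\[
i_! i^! \KK \longrightarrow \KK \longrightarrow j_* j^* \KK \xrightarrow{+1}.
\]
Applying $\rH_c^{\bullet}(Z,-)$ and using proper base change ($\rH_c^r(Z, i_!i^!\KK) = \rH_c^r(\phi^{-1}(\Lambda), i^!\KK)$) turns the desired statement about the Gysin map into the assertion that $\rH_c^r(Z, j_*j^*\KK) = 0$ for $r \leq 1$; the long exact sequence then gives surjectivity in degree $1$ and isomorphism in degrees $> 1$ once we know the $r=0$ and $r=1$ vanishing feeds the correct indices (so I should be a little careful: vanishing of $\rH_c^{r}(Z, j_*j^*\KK)$ for $r \leq 1$ makes $\rH_c^{r}(\phi^{-1}(\Lambda),i^!\KK)\to \rH_c^r(Z,\KK)$ an isomorphism for $r \geq 3$ and an epimorphism for $r=2$, so the honest statement should really be read with the triangle's connecting maps in mind — I expect the bookkeeping of which cohomological degree of $j_*j^*\KK$ controls which degree of the Gysin map to be the one genuinely fiddly point).

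The next step is to identify $\rH_c^r(Z, j_*j^*\KK)$ with something Verdier-dual to the object appearing in Corollary~\ref{Deligne_WL}. Since $\KK$ is semi-perverse, i.e. upper semi-perverse in the terminology of the excerpt (lives in ${}^p D^{\leq 0}$ after the appropriate convention), its Verdier dual $\mathbb{D}\KK$ is lower semi-perverse. By the standard duality interchanging $j_*j^*$ with $j_!j^!$ and $\rH_c$ with $\rH$, we have $\rH_c^r(Z, j_*j^*\KK) \cong \rH^{-r}(Z, j_!j^! \mathbb{D}\KK)^{\vee}$. Now $\mathbb{D}\KK$ is not upper semi-perverse but lower semi-perverse; however, Corollary~\ref{Deligne_WL} is stated for upper semi-perverse sheaves, so I would instead apply the dual form of Corollary~\ref{SWL_Rad} (the right $t$-exactness of $\cR_{N-1*}\circ\phi_!$, equivalently Remark~\ref{duality_remark_GRass}) to get, for general $\Lambda$, the vanishing $\rH^i(Z, j_*j^*\KK) = 0$ for $i > 0$ when $\KK$ is semi-perverse — and then dualize that. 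Concretely: apply Corollary~\ref{Deligne_WL} to the upper semi-perverse sheaf $\mathbb{D}\KK[\text{shift}]$... here one must be careful about shifts, since $\mathbb{D}$ of semi-perverse need not be semi-perverse in the same convention. The cleanest route is to run the Radon-transform argument of Corollary~\ref{Deligne_WL} directly with $j_*j^*$ in place of $j_!j^!$, invoking the right $t$-exactness half of Corollary~\ref{SWL_Rad} together with generic base change identifying the stalk of $\cR_{N-1*}(\phi_!\KK)$ (or the relevant transform) with $R\Gamma_c(V, j^*\KK)$ type data, so that semi-perversity of $\KK$ forces $\cpH^{>0} = 0$ of the transform, hence $\rH^i_c(Z, j_*j^*\KK) = 0$ for $i > 0$.

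So in order: (1) write the localization triangle for $(\phi^{-1}(\Lambda), Z, V)$ and reduce to vanishing of $\rH_c^{\leq 1}(Z, j_*j^*\KK)$; (2) re-derive, by the Radon-transform method of Corollary~\ref{Deligne_WL} applied to $\cR_{N-1*}$ and generic base change, that for general $\Lambda$ one has $\rH^i_c(Z, j_*j^*\KK) = 0$ for $i$ outside the semi-perverse range, which for a semi-perverse $\KK$ on a variety means the needed low-degree vanishing — taking into account that $\dim \phi^{-1}(\Lambda)$ and $\dim Z$ differ by one so the compactly-supported cohomology of $j_*j^*\KK$ is concentrated appropriately; (3) assemble the long exact sequence and read off surjectivity at $r=1$ and isomorphism for $r>1$. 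The main obstacle I anticipate is step~(2): getting the duality and the $t$-exactness statement to line up with the right conventions for "semi-perverse" (upper vs. lower) and matching the cohomological degrees precisely, since Corollary~\ref{Deligne_WL} is phrased with $j_!j^!$ and upper semi-perversity while here we need the $j_*j^*$ / compactly-supported version; this is where a clean appeal to Remark~\ref{duality_remark_GRass} rather than to Corollary~\ref{Deligne_WL} itself will be needed, plus an honest check of the generic base change identification of the transform's stalk with $R\Gamma_c(V, j^*\KK|_V)$.
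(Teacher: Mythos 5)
Your strategy is exactly the paper's: reduce via the Gysin/localization triangle to vanishing of $\rH_c^{\bullet}(Z, j_*j^*\KK)$, and get that vanishing as the Verdier dual of Corollary~\ref{Deligne_WL}. Two small points where your bookkeeping slips, both easily repaired. First, the needed vanishing is $\rH_c^{r}(Z, j_*j^*\KK) = 0$ for $r > 0$ (not $r \leq 1$): in the long exact sequence
\[
\cdots \to \rH^{r-1}_c(Z, j_*j^*\KK) \to \rH^r_c(\phi^{-1}\Lambda, i^!\KK) \to \rH^r_c(Z, \KK) \to \rH^r_c(Z, j_*j^*\KK) \to \cdots
\]
surjectivity of the Gysin map in degree $r$ follows from $\rH^r_c(Z,j_*j^*\KK)=0$, and injectivity from $\rH^{r-1}_c(Z,j_*j^*\KK)=0$; so vanishing for $r>0$ gives surjectivity for $r\geq1$ and isomorphism for $r\geq2$, matching the statement on the nose. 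Second, your worry about semi-perverse vs.\ upper semi-perverse is unfounded: $\KK$ semi-perverse means $\KK \in {}^pD^{\leq 0}$, so $\bbD\KK \in {}^pD^{\geq 0}$, which is precisely the upper semi-perversity hypothesis of Corollary~\ref{Deligne_WL}. Since $\bbD(j_*j^*\KK) = j_!j^!\bbD\KK$ and $\rH^r_c(Z,j_*j^*\KK)$ is dual to $\rH^{-r}(Z, j_!j^!\bbD\KK)$, Corollary~\ref{Deligne_WL} applied to $\bbD\KK$ gives the vanishing for $r>0$ directly; the detour through Remark~\ref{duality_remark_GRass} or rerunning the Radon argument with $\cR_{N-1*}$ is unnecessary (though of course equivalent).
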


\begin{proof}
By the standard Gysin triangle, it suffices to show that $\rH_c^{r}(Z,j_*j^*\KK)$ vanishes for $r>0$. Here $j \colon Z\backslash \phi^{-1}\Lambda \hookrightarrow Z$ is the complement of a general hyperplane section pullback. The statement is dual to the assertion in Corollary \ref{Deligne_WL}.
\end{proof}

Now suppose $i \colon Z \hookrightarrow \bbA^N_k$ be a closed subvariety of dimension $n$. Let $\ell$ be a prime invertible in $k$.
\begin{prop}\label{affine_Gysin2}
    Then for a general hyperplane section $\Lambda \subset \bbA^N$, the Gysin map 
    \[
    \rH^{r-2}_c \left (\Lambda \backslash (\Lambda \cap Z), \bbQ_{\ell} \right )(-1) \to \rH_c^{r}(\bbA^N \backslash Z, \bbQ_{\ell}),
    \]
    \noindent is a surjection for $r-1>\textup{dim}(Z)$.
\end{prop}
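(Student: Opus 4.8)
The plan is to deduce this from Lemma \ref{affine_Gysin} applied to a suitable sheaf on an affine variety, together with the long exact sequences relating the cohomology of $\bbA^N$, of $Z$, and of the complement $\bbA^N \setminus Z$. Concretely, take $\KK = \bbQ_\ell[n]$ on $Z$ (which is semi-perverse, indeed perverse since $Z$ has dimension $n$; but semi-perversity is all we need, and even the constant sheaf on $\bbA^N$ will enter). The first step is to set up the open-closed decomposition: write $u \colon \bbA^N \setminus Z \hookrightarrow \bbA^N$ for the open immersion and $i \colon Z \hookrightarrow \bbA^N$ for the closed one, giving the triangle $u_! u^* \to \mathrm{id} \to i_* i^*$ applied to $\bbQ_\ell$ on $\bbA^N$, and hence the long exact sequence
\[
\cdots \to \rH^r_c(\bbA^N \setminus Z, \bbQ_\ell) \to \rH^r_c(\bbA^N, \bbQ_\ell) \to \rH^r_c(Z, \bbQ_\ell) \to \rH^{r+1}_c(\bbA^N \setminus Z, \bbQ_\ell) \to \cdots.
\]
Since $\rH^r_c(\bbA^N, \bbQ_\ell)$ is concentrated in degree $2N$, for $r \leq 2N - 1$ this gives $\rH^{r+1}_c(\bbA^N \setminus Z, \bbQ_\ell) \cong \rH^r_c(Z, \bbQ_\ell)$, so it suffices to understand $\rH^r_c(Z, \bbQ_\ell)$ and transport the hyperplane-section statement through this isomorphism. (One must also run the same decomposition for $\Lambda \cong \bbA^{N-1}$ in place of $\bbA^N$, getting $\rH^{r-1}_c(\Lambda \setminus (\Lambda \cap Z), \bbQ_\ell) \cong \rH^{r-2}_c(\Lambda \cap Z, \bbQ_\ell)$ in the relevant range, up to the degree shift and the Tate twist coming from purity of the hyperplane.)

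The second step is to apply Lemma \ref{affine_Gysin} to the quasi-finite (indeed affine, being a closed immersion composed with nothing) map $\phi = i \colon Z \to \bbA^N \hookrightarrow \bbP$ — here one uses $\bbP = \bbP^N$ and a general hyperplane $\Lambda_{\bbP} \subset \bbP$ not containing the hyperplane at infinity, so that $\Lambda := \Lambda_{\bbP} \cap \bbA^N$ is a general affine hyperplane in $\bbA^N$ and $\phi^{-1}(\Lambda_{\bbP}) = \Lambda \cap Z$. Lemma \ref{affine_Gysin}, applied with $\KK = \bbQ_\ell[n]$, tells us the Gysin map $\rH^r_c(\Lambda \cap Z, \bbQ_\ell(-1)) \to \rH^{r+?}_c(Z, \bbQ_\ell)$ is an isomorphism for the appropriate range of degrees and a surjection at the edge; one has to chase the shift by $n$ and the twist carefully, but the upshot is a surjection $\rH^{r-2}_c(\Lambda \cap Z, \bbQ_\ell)(-1) \to \rH^r_c(Z, \bbQ_\ell)$ whenever $r - 1 > n = \dim Z$ (with isomorphism for $r$ strictly larger). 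The third step is purely formal: one assembles a commutative square whose vertical arrows are the two Gysin maps (for $\bbA^N \setminus Z$ and for $Z$), whose horizontal arrows are the connecting isomorphisms from Step 1, and concludes that surjectivity of the right vertical arrow forces surjectivity of the left one, in the range $r - 1 > \dim Z$.

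The main obstacle, as usual in these arguments, is bookkeeping rather than conceptual: getting every cohomological degree shift, every Tate twist (the $(-1)$ in the statement), and every "general hyperplane" choice to line up consistently across the three long exact sequences (for $\bbA^N$, for $\Lambda$, and the Gysin triangle on $Z$), and checking that the connecting maps in Step 1 are genuinely compatible with the Gysin maps — i.e.\ that the relevant square commutes. This compatibility is a standard consequence of the functoriality of the localization triangle under the closed immersion $\Lambda \cap Z \hookrightarrow Z$ (equivalently $\Lambda \setminus (\Lambda \cap Z) \hookrightarrow \bbA^N \setminus Z$, etc.), but it should be spelled out. I do not expect to need anything beyond Lemma \ref{affine_Gysin} and elementary properties of $\rH^*_c(\bbA^m)$.
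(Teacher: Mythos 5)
Your strategy is the same as the paper's: build a commutative square whose horizontal arrows are the two Gysin maps and whose vertical arrows are the connecting maps of the localization triangles for $Z\hookrightarrow\bbA^N$ and $\Lambda\cap Z\hookrightarrow\Lambda$, then chase surjectivity using Lemma~\ref{affine_Gysin} on the $Z$-side together with the vanishing of $\rH^{*}_c$ of affine space below top degree.

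There is, however, a genuine error in your Step~2. You write the source of the Gysin map from Lemma~\ref{affine_Gysin} as $\rH^{r-2}_c(\Lambda\cap Z,\bbQ_\ell)(-1)$. That identification uses purity for the closed immersion $i\colon\Lambda\cap Z\hookrightarrow Z$, i.e.\ $i^!\bbQ_\ell\simeq\bbQ_\ell(-1)[-2]$, which fails when $Z$ is singular --- and singular $Z$ is precisely what this section is about. Lemma~\ref{affine_Gysin} is stated with $\rH^{r}_c(\phi^{-1}(\Lambda),i^!\KK)$ as the source, and the paper's proof deliberately keeps $i^!\bbQ_\ell$ unevaluated in the top row of the square. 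Purity is invoked only for the other closed immersion $\Lambda\setminus(\Lambda\cap Z)\hookrightarrow\bbA^N\setminus Z$ into the \emph{smooth} complement, and that is what produces the $\rH^{r-2}_c(\Lambda\setminus(\Lambda\cap Z),\bbQ_\ell)(-1)$ appearing in the proposition. The fix is simply not to evaluate $i^!\bbQ_\ell$ on the singular side: the square still commutes (both connecting maps arise from the localization triangle on $\Lambda$ with coefficients $\kappa^!\bbQ_\ell$, $\kappa\colon\Lambda\hookrightarrow\bbA^N$, mapping via the counit $\kappa_*\kappa^!\to\mathrm{id}$ to the triangle on $\bbA^N$), and the chase only uses the surjectivity furnished by Lemma~\ref{affine_Gysin}, never the explicit shape of its source. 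A smaller point: you should dispose of $r\geq 2N$ separately at the outset, as the paper does, because for $r\geq 2N$ the nonvanishing of $\rH^{2N}_c(\bbA^N)$ (resp.\ $\rH^{2N-2}_c(\Lambda)$) breaks the automatic surjectivity of the connecting maps; those top-degree cases are elementary and should be checked by hand.
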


\begin{proof}
    If $Z=\emptyset$ we are done. We now assume that $\textup{dim}(Z) \geq 0$. We may also assume $r<2N$, otherwise the result is obvious. For any hyperplane section $\Lambda \subset \bbA^N$, we have a commutative diagram with surjective vertical arrows
\begin{equation}
    \begin{tikzcd}
        \rH_c^{r-1}(\Lambda \cap Z, i^{!}\bbQ_{\ell}) \arrow[r,"\alpha"] \arrow[d,two heads] & \rH^{r-1}_c(Z,\bbQ_{\ell}) \arrow[d,two heads] \\
        \rH^{r}_c \left (\Lambda \backslash (\Lambda \cap Z), i^!\bbQ_{\ell}\right ) \arrow[r,"\beta"] & \rH_c^{r}(\bbA^N \backslash Z, \bbQ_{\ell}). 
    \end{tikzcd}
\end{equation}
Applying Lemma \ref{affine_Gysin} to $\KK=\mathbb{Q}_{\ell}[\textup{dim}(Z)]$ on $Z$, we conclude that for a general hyperplane section $\Lambda$, the map $\alpha$ and thus the map $\beta$ is a surjection for $r-1>\textup{dim}(Z)$.

\end{proof}

\begin{rem}\label{affine_Gysin3}
   Proposition \ref{affine_Gysin2} was already observed for $Z \subset \bbA^N$, a local complete intersection (and thus $\bbQ_{\ell}[\textup{dim}(Z)]$ is perverse) in \cite{EW22} using Deligne's weak Lefschetz Theorem \cite[Corollary A.5]{Kat93}. 
\end{rem}

\subsection{Bounds on cohomological divisibility}
Let $(N;d_1,d_2 \cdots,d_r) \in \bbN \times \bbN^{r}$ be a tuple of natural numbers. For each non-negative integer $j$ consider the following function on $\bbN \times \bbN^{s}$

\begin{equation}\label{BCH_1}
\mu_{j}(N;d_1,d_2 \cdots,d_r)=j+\max(0,\left\lceil\frac{N-j-\sum_id_i}{\max_i d_i}\right\rceil)\geq j.
\end{equation}

Let $\bbF_{q}$ be a finite field with $q$ elements. Let $Z_0 \subset \bbA^N_{\bbF_{q}}$ be an affine scheme defined by the vanishing of $r$ polynomials of degree $d_1,d_2 \cdots, d_r$ with $N,r \geq 1$. We denote by $Z$ (resp. $\bbA^N$), the base change of $Z_0$ (resp. $\bbA^N_{\bbF_q}$) to an algebraic closure (say $\bar{\bbF}_q$) of $\bbF_q$. 

\subsubsection*{Summary of previous results on cohomological divisibility:}Let $\rH_c^*(\hspace{0.3cm})$ denote the compactly supported $\ell$-adic cohomology, for $\ell \nmid q$. By \cite[XXI, Appendice]{SGA7II}, the eigenvalues of the geometric Frobenius on $\rH_c^*(Z)$ (and $\rH^*_c(\bbA^N\backslash Z)$) are algebraic integers and thus it makes sense to talk about their divisibility properties. 
 It follows from \cite[XXI, Appendice, (5.2)]{SGA7II} that the Frobenius eigenvalues on $\rH_c^{N+j}(\bbA^N \backslash Z)$ are divisible by $q^j$\footnote{This holds for any separated scheme of finite type over $\bbF_q$.} for $j \geq 0$. In \cite[Theorem 2.3, (2)]{EK05}, Esnault-Katz showed an improved bound replacing $q^j$ above by $q^{\mu_j(N;d_1,d_2 \cdots d_r)}$. We now prove the main result of this section.

\subsubsection{Proof of Theorem \ref{CD_Intro}}\label{CD_Proof}
\begin{proof}
The long exact sequence (with values in $\rH^*_c)$ for the triple $(Z,\bbA^N,\bbA^N \backslash Z)$ together with the Frobenius equivariant isomorphism $R\Gamma_c(\bbA^N,\bbQ_{\ell}) \simeq \bbQ_{\ell}(-N)$ implies that it suffices to prove (b). Moreover, in doing so we may freely extend the field of definition of $Z$ since the geometric Frobenius concerning $\bbF_{q^r}$ is the $r^{\mathrm{th}}$ iterate of the geometric Frobenius with respect to $\bbF_q$.

First suppose $\textup{dim}(Z)=-1$, then $Z=\emptyset$ and in this case the theorem is obvious since $\rH_c^{\textup{dim}(Z)+j+1}(\bbA^N\backslash Z)$ vanishes for $0 \leq j \leq \textup{dim}(Z)+1$. Thus we may assume $\textup{dim}(Z) \geq 0$. Next suppose $N=1$, then since $d_i \geq 1$ we must have $\textup{dim}(Z)=0$ and $\mu_j(N;d_1,d_2 \cdots,d_r)=j$. Also note that $\rH_c^{1}(\bbA^1\backslash Z) \simeq \rH_c^{0}(Z) \simeq \oplus^{\#Z(\bar{\bbF}_{q})}\bbQ_{\ell}$ (the $j=0$ case) and $\rH_c^{2}(\bbA^1\backslash Z) \simeq \bbQ_{\ell}(-1)$ (the $j=1$ case). Thus the theorem is verified when $N=1$. In the rest of the proof, we assume $N \geq 2$ and $\textup{dim}(Z) \geq 0$.

Using Proposition \ref{affine_Gysin} and if necessary extending the base field (and still denoting it by $\bbF_q$), we have a hyperplane $\Lambda_0 \subset \bbA_{\bbF_q}^N$ such that the Gysin map

\begin{equation}\label{CD_Eq1}
\begin{tikzcd}
    \rH^{\textup{dim}(Z)+j-1}_c \left (\Lambda \backslash (\Lambda \cap Z)\right )(-1) \arrow[r,two heads] & \rH_c^{\textup{dim}(Z)+j+1}(\bbA^N \backslash Z),
\end{tikzcd}
\end{equation}
\noindent is a surjection for $j>0$ and such that $\textup{dim}(\Lambda \cap Z)=\textup{dim}(Z)-1$. Here $\Lambda$ is the base change of $\Lambda_0$ to $\bar{\bbF}_q$. Note that $\Lambda \cap Z \subset \Lambda$ is defined by $r$ equations of degrees $d_1,d_2 \cdots ,d_r$ in $\Lambda$ an affine space of dimension $N-1\geq 1$.

By induction on $\textup{dim}(Z)$, we may assume that the eigenvalues of the geometric Frobenius acting on  $\rH^{(\textup{dim}(Z)-1)+(j-1)+1}_c \left (\Lambda \backslash (\Lambda \cap Z) \right )$ are divisible by $q^{\mu_{j-1}(N-1;d_1,d_2 \cdots,d_r)}$ for $0 \leq j-1 \leq \textup{dim}(Z)$. Thus the surjection in (\ref{CD_Eq1}) implies that the eigenvalues of the geometric Frobenius acting on $\rH_c^{\textup{dim}(Z)+j+1}(\bbA^N \backslash Z)$ are divisible by $q^{\mu_{j-1}(N-1;d_1,d_2 \cdots,d_r)}+1=q^{\mu_j(N;d_1,d_2 \cdots,d_r)}$ (where we have accounted for the Tate twist on the left), in the range $1 \leq j \leq \textup{dim}(Z)+1$. Finally for $j=0$, the result follows from \cite[Theorem 2.3, (1)]{EK05}.
\end{proof}

An analogous argument can be used to deduce the following result about Hodge levels\footnote{The Hodge level of a Mixed Hodge structure $(\rH_{\bbZ},\textup{W}_{\bullet},\textup{F}^{\bullet})$ is the largest integer $n$ such that $\textup{F}^n\rH_{\bbC}=\rH_{\bbC}$.}, where instead of \cite[Theorem 2.3, (1)]{EK05}  one uses \cite[\S 5.3]{EK05} in the proof of Theorem \ref{CD_Intro} above. In the next theorem $Z \subset \bbA^N_{\bbC}$ is an affine scheme defined by $r$ polynomials of degree $d_1,d_2 \cdots, d_r$ with $N,r \geq 1$.
\begin{thm}\label{HL}
    The Hodge levels of
\begin{enumerate}[(a)]
    \item $\rH_c^{\textup{dim}(Z)+j}(Z)$, for $j\geq 0$ and 
    \item $\rH_c^{\textup{dim}(Z)+j+1}(\bbA^N\backslash Z)$ for $0 \leq j \leq \textup{dim}(Z)+1$
\end{enumerate}
    \noindent are atleast $\mu_j(N;d_1,d_2 \cdots,d_r)$.
\end{thm}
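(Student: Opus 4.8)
The plan is to mirror the proof of Theorem~\ref{CD_Intro} step by step, replacing the $\ell$-adic divisibility bookkeeping by the corresponding statement for Hodge levels. The key geometric input is identical: Proposition~\ref{affine_Gysin2} (or rather the underlying Lemma~\ref{affine_Gysin}, applied to $\KK=\bbQ_{\ell}[\dim Z]$, which here becomes $\bbQ[\dim Z]$ in the category of mixed Hodge modules or via the comparison with Betti cohomology) gives, for a general hyperplane $\Lambda\subset\bbA^N_{\bbC}$, a \emph{surjective} Gysin map
\[
\rH_c^{\dim(Z)+j-1}\!\left(\Lambda\setminus(\Lambda\cap Z)\right)(-1)\twoheadrightarrow \rH_c^{\dim(Z)+j+1}(\bbA^N\setminus Z)
\]
for $j>0$, and one may arrange $\dim(\Lambda\cap Z)=\dim(Z)-1$ at the same time. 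This surjection is a morphism of mixed Hodge structures, so the Hodge level of the target is at least that of the source.

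First I would reduce (a) to (b) exactly as in the proof of Theorem~\ref{CD_Intro}: the long exact sequence of the triple $(Z,\bbA^N,\bbA^N\setminus Z)$ together with the (Hodge-theoretic) identification $\rH_c^*(\bbA^N)=\bbQ(-N)$ in degree $2N$ lets one transfer a Hodge-level bound on $\rH_c^{\dim(Z)+j+1}(\bbA^N\setminus Z)$ to one on $\rH_c^{\dim(Z)+j}(Z)$, after accounting for the shift in the Tate twist. Then I would prove (b) by induction on $\dim(Z)$, with base cases $\dim(Z)=-1$ (where $Z=\emptyset$ and the relevant group vanishes) and $N=1$ (forcing $\dim Z=0$, where $\mu_j=j$ and the two nonzero groups are a direct sum of copies of $\bbQ$ and a single $\bbQ(-1)$, whose Hodge levels $0$ and $1$ match). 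For $N\ge 2$, $\dim(Z)\ge 0$: apply the Gysin surjection above; note $\Lambda\cap Z\subset\Lambda\cong\bbA^{N-1}_{\bbC}$ is cut out by $r$ polynomials of the same degrees $d_1,\dots,d_r$; by the inductive hypothesis the Hodge level of $\rH_c^{(\dim Z-1)+(j-1)+1}(\Lambda\setminus(\Lambda\cap Z))$ is at least $\mu_{j-1}(N-1;d_1,\dots,d_r)$, hence after the Tate twist $(-1)$ the source above has Hodge level at least $\mu_{j-1}(N-1;d_1,\dots,d_r)+1$, and surjectivity passes this to the target; finally the arithmetic identity $\mu_{j-1}(N-1;\vec d)+1=\mu_j(N;\vec d)$ (immediate from \eqref{BCH_1}) gives the claim for $1\le j\le\dim(Z)+1$. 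The remaining case $j=0$ is supplied by \cite[\S 5.3]{EK05} in place of \cite[Theorem 2.3, (1)]{EK05}.

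The one point requiring genuine care — the main obstacle — is that Lemma~\ref{affine_Gysin} and Proposition~\ref{affine_Gysin2} were proved in the $\ell$-adic (constructible sheaf) setting, whereas Hodge levels live over $\bbC$ in the world of mixed Hodge structures or mixed Hodge modules. One must either rerun the proof of Corollary~\ref{Deligne_WL} (Deligne's weak Lefschetz for upper semi-perverse objects, which is what powers Lemma~\ref{affine_Gysin}) in Saito's category of mixed Hodge modules — where the perverse $t$-structure, Artin vanishing, and the Brylinski--Radon formalism all have exact analogues — or invoke the comparison between the constructible and Hodge-module realizations together with strictness of morphisms of mixed Hodge structures, so that the surjectivity of the Gysin map, established topologically, is automatically a surjection of mixed Hodge structures and thus respects the Hodge filtration. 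Granting this (which is the content of the footnote pointing to \cite{EW22}, where the analogous passage is made), the rest of the argument is a formal transcription of the proof of Theorem~\ref{CD_Intro}.
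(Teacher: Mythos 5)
Your proposal is correct and is essentially the paper's intended argument: the paper gives no separate proof of Theorem~\ref{HL}, merely a one-sentence remark that one should rerun the proof of Theorem~\ref{CD_Intro} with the Hodge-level estimate of \cite[\S 5.3]{EK05} substituted for the $j=0$ divisibility input, and your write-up is exactly that transcription, including the reduction of (a) to (b), the induction on $\dim(Z)$, the base cases $\dim(Z)=-1$ and $N=1$, the inductive step via the Gysin surjection, and the arithmetic identity $\mu_{j-1}(N-1;\vec d)+1=\mu_j(N;\vec d)$. The one place where you go beyond what the paper spells out is the discussion of how to pass from the $\ell$-adic surjectivity of Proposition~\ref{affine_Gysin2} to a statement about mixed Hodge structures; the paper leaves this implicit, but your resolution is the standard and correct one: Lemma~\ref{affine_Gysin} and Proposition~\ref{affine_Gysin2} are proved over an arbitrary algebraically closed field (so in particular over $\bbC$), Artin comparison transports the surjectivity to Betti cohomology with $\bbQ$-coefficients, and the Gysin map for a smooth divisor is a morphism of mixed Hodge structures, so surjectivity of the underlying $\bbQ$-linear map already forces the Hodge-level inequality (strictness is not even needed: if $F^nA_\bbC=A_\bbC$ and $f\colon A\twoheadrightarrow B$ is a filtered surjection then $B_\bbC=f(F^nA_\bbC)\subseteq F^nB_\bbC$). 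Your alternative suggestion of rerunning the Radon-transform argument inside Saito's mixed Hodge modules would also work but is more machinery than is needed here.
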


\section{P=Dec(F) as functors}\label{P_Dec_F_Functors}
In this section, we shall upgrade Theorem \ref{thm:affine-radon1} to an equivalence of functors. We shall make use of the results proved in the appendix. We begin by recalling the same

\subsection{Summary of notations and results from Appendix \ref{appendix}}\label{appendixrecall}
As before we fix an algebraically closed field $k$ and let $X/k$ be a separated scheme of finite type over $k$.

\begin{enumerate}[(a)]
    \item  There is a symmetric monoidal stable $\infty$-category $\cD_{\textup{cons}}(X)$ whose underlying homotopy category is $D^b_c(X)$. It comes equipped with a perverse $t$-structure (see \S \ref{D_cons}). 
    \item We denote by $\cD_{\textup{indcons}}(X)$, the ind-completion of $\cD_{\textup{cons}}(X)$. This is a symmetric monoidal presentable stable $\infty$-category (see \S \ref{properties_ind_cons}). Moreover there is an unique $t$-structure on $\cD_{\textup{indcons}}(X)$ such that the inclusion $\cD_{\textup{cons}}(X) \subseteq \cD_{\textup{indcons}}(X)$ is $t$-exact. 
    \item We denote by $\cD F_{\textup{indcons}}(X) :=\textup{Func}(\bbZ^{\textup{op}},\cD_{\textup{indcons}})$, the filtered category of $\cD_{\textup{indcons}}(X)$. This is also a symmetric monoidal presentable stable $\infty$-category. This comes equipped with functors $\textup{Gr}^i$, $\textup{ev}_i$ and $\KK^{\bullet} \to \KK^{\bullet}(-\infty)$ with values in $\cD_{\textup{indcons}}$ (\S \ref{properties_ind_cons}, (b), (c) and (d)).
    \item The category $\cD F_{\textup{indcons}}(X)$ has a Beilinson $t$-structure (see Theorem \ref{Beilinson_t_structure}). An object $\KK^{\bullet}$ in $\cD F_{\textup{indcons}}(X)$ is in $\cD F_{\textup{indcons}}^{\leq 0}$ (resp. $\cD F_{\textup{indcons}}^{\geq 0})$ if $\textup{Gr}^i\KK^{\bullet}$ (resp. $\KK^i$) is in $\cD^{\leq i}_{\textup{indcons}}$ (resp. $\cD^{\geq i}_{\textup{indcons}}$). We denote the truncations for this $t$-structure by $(^{B}\tau_{\leq 0},{}^{B}\tau_{\geq 0})$. 
    \item There is a functor $\textup{Dec} \colon \cD F_{\textup{indcons}}(X) \to \cD F_{\textup{indcons}}(X)$ which on objects is given by $\textup{Dec}(\KK^{\bullet})^{i}=({}^B\tau_{\leq -i}\KK^{\bullet})(-\infty)$.
    \item We let $\cD F_{\textup{cons}}(X)$\footnote{Objects in this category are to be thought of as finite filtered constructible complexes.} to be the full subcategory of $\cD F_{\textup{indcons}}(X)$ consisting of objects $\KK^{\bullet}$ such that
\begin{enumerate}[(i)]
    \item $\KK^{\bullet}$ is complete, that is $\lim \KK^i=0$ and there exists an $N$ such that $\textup{Gr}^{i}(\KK^{\bullet})=0$ for $|i|>N$.
    \item $\textup{Gr}^{i}(\KK^{\bullet})$ belong to the full subcategory $\cD_{cons}(X)$ of $\cD_{\textup{indcons}}(X)$.
\end{enumerate}
    \item By Proposition \ref{t_strtucture_DF_cons}, $\cD F_{\textup{cons}}(X)$ is a symmetric monoidal stable $\infty$-category and the Beilinson $t$-structure from $\cD_{\textup{indcons}}(X)$ restricts to one on $\cD F_{\textup{cons}}(X)$.
    \item Moreover the functor $\textup{Dec}$ from (e) above, preserves the subcategory $\cD F_{\textup{cons}}(X)$ of  $\cD_{\textup{indcons}}(X)$ (see Proposition \ref{t_strtucture_DF_cons}, (b)).
    
\end{enumerate}

\subsection{Revisiting Theorem \ref{thm:affine-radon1}}\label{Revisiting}
We recall the set-up of Proposition \ref{flag_t_exactness}. We have a base scheme $S/k$, a locally free sheaf $\sE$ on $S$ of rank $N+1$. We denote by $\bbP$ (resp. $\textup{Fl}$) the associated projective bundle (resp.~flag bundle).

We also defined functors\footnote{We continue using the same notation for the functor induced between the corresponding $\infty$-categories.} 
\[
\textup{F}^{r}, \textup{Gr}^r_{\textup{F}} \colon \cD_{\textup{cons}}(\bbP) \to \cD_{\textup{cons}}(\textup{Fl}).
\]

We can put the $\textup{F}^r$'s together and view them as a functor 

\[
\textup{F} \colon \cD_{\textup{cons}}(\bbP) \to \cD F_{\textup{cons}}(\textup{Fl}),
\]

\noindent such that for any sheaf $\KK$, $\textup{ev}_r(\textup{F}(\KK))=\textup{F}^r(\KK)$ and $\textup{Gr}^r(\textup{F}(\KK))=\textup{Gr}^r_{\textup{F}}(\KK)$. Thus using Proposition \ref{t_strtucture_DF_cons}, (a) we may restate Proposition \ref{flag_t_exactness} as follows. Let $\phi \colon Z \to \bbP$ be a quasi-finite morphism. 

\begin{prop}\label{prop_revist}
    The functor $\textup{F} \circ \phi_* \colon \cD_{\textup{cons}}(Z) \to \cD F_{\textup{cons}}(\textup{Fl})$ is left $t$-exact. Moreover, if $Z/S$ is affine then it is $t$-exact.
\end{prop}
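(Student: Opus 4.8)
The plan is to deduce Proposition \ref{prop_revist} from Proposition \ref{flag_t_exactness} by unwinding the definition of the Beilinson $t$-structure on $\cD F_{\textup{cons}}(\textup{Fl})$ recalled in \S \ref{appendixrecall}, (d). Recall that an object $\LL^{\bullet}$ lies in the heart of the Beilinson $t$-structure precisely when $\textup{Gr}^i(\LL^{\bullet}) \in \cD^{\leq i}_{\textup{cons}}$ and simultaneously $\LL^i \in \cD^{\geq i}_{\textup{cons}}$ for all $i$; more precisely, $t$-exactness of a functor $G$ into $\cD F_{\textup{cons}}(\textup{Fl})$ amounts to checking, for $\KK$ perverse on $Z$, that both $\textup{Gr}^i(G(\KK)) \in \cD^{\leq i}_{\textup{cons}}$ and $\textup{ev}_i(G(\KK)) \in \cD^{\geq i}_{\textup{cons}}$, and dually for left $t$-exactness one only needs the second family of conditions (the $\textup{ev}_i$ part), while right $t$-exactness is the first family. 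So the whole statement reduces to translating the two halves into statements about $\textup{Gr}^r_{\textup{F}} \circ \phi_*$ and $\textup{F}^r \circ \phi_*$.

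First I would record that, by construction, $\textup{ev}_r(\textup{F}(\KK)) = \textup{F}^r(\KK)$ and $\textup{Gr}^r(\textup{F}(\KK)) = \textup{Gr}^r_{\textup{F}}(\KK)$, so the conditions to verify are exactly: (i) $\textup{Gr}^r_{\textup{F}} \circ \phi_*(\KK) \in \cD^{\leq r}_{\textup{cons}}(\textup{Fl})$, i.e. $\textup{Gr}^r_{\textup{F}} \circ \phi_*[r]$ is right $t$-exact; and (ii) $\textup{F}^r \circ \phi_*(\KK) \in \cD^{\geq r}_{\textup{cons}}(\textup{Fl})$, i.e. $\textup{F}^r \circ \phi_*[r]$ is left $t$-exact. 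For the left $t$-exactness claim of the proposition I only need (ii), and for the $t$-exactness claim (when $Z/S$ affine) I need both. Condition (ii) was proved, for all $r$, inside the proof of Proposition \ref{Grass_Rad_Corollary}: the exact triangle $\textup{F}^{r}\circ \phi_{*}\KK[r] \to \textup{Gr}_\textup{F}^{r}\circ \phi_{*}\KK[r] \to \textup{F}^{r+1}\circ \phi_{*}\KK[r+1] \xrightarrow{+1}$ together with left $t$-exactness of $\textup{Gr}^r_{\textup{F}}\circ\phi_*[r]$ from Proposition \ref{flag_t_exactness} and descending induction on $r$ (bottoming out at $r = 0$, where $\textup{F}^0 = \textup{Gr}^0_{\textup{F}}$, and using that $\textup{F}^r = \textup{F}^{-N-1}$ stabilizes for $r \leq -N-1$ so the induction terminates) gives left $t$-exactness of $\textup{F}^r\circ\phi_*[r]$ for every $r$. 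This establishes the first assertion.

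For the second assertion, assume $Z/S$ affine. Then Proposition \ref{flag_t_exactness} upgrades to $t$-exactness of $\textup{Gr}^r_{\textup{F}}\circ\phi_*[r]$, so in addition to (ii) each $\textup{Gr}^r_{\textup{F}}\circ\phi_*[r]$ is right $t$-exact; feeding this into the same exact triangle (rewritten as $\textup{F}^{r+1}\circ \phi_{*}\KK[r] \to \textup{F}^{r}\circ \phi_{*}\KK[r] \to \textup{Gr}_\textup{F}^{r}\circ \phi_{*}\KK[r] \xrightarrow{+1}$) and running descending induction on $r$ again — using that for $r$ sufficiently negative $\textup{F}^r = \textup{F}^{-N-1} = \overline{\pi}^{\vee}_*\overline{\pi}^{\dagger}\phi_*$, which is right $t$-exact because $\overline{\pi}^{\dagger}$ is $t$-exact, $\phi_*$ is left/right... actually here one observes $\overline{\pi}^{\vee}$ is proper, so one must instead anchor the induction at the stabilized value and check it directly, exactly as in the proof of Proposition \ref{Grass_Rad_Corollary}, case $Z/S$ affine — yields that $\textup{F}^r\circ\phi_*$ is right $t$-exact, hence $\textup{F}^r\circ\phi_*[r] \in \cD^{\leq r}$ gives condition (i). Combining (i) and (ii) for all $r$ says exactly that $\textup{F}\circ\phi_*(\KK)$ lies in the heart of the Beilinson $t$-structure whenever $\KK$ is perverse, i.e. $\textup{F}\circ\phi_*$ is $t$-exact.

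The step I expect to require the most care is not any single inequality but the bookkeeping of the two descending inductions and, in particular, correctly identifying the stabilized term $\textup{F}^r$ for $r \leq -N-1$ and verifying its one-sided $t$-exactness directly (right $t$-exactness in the affine case comes down to properness of $\overline{\pi}^{\vee}$ plus $t$-exactness of $\overline{\pi}^{\dagger}$ plus $t$-exactness of $\phi_*$ for $\phi$ quasi-finite affine — but $\phi$ here is only quasi-finite, so in the affine-$Z$ case one uses that $\phi$ is then affine); this is the only place where the hypothesis "$Z/S$ affine" is genuinely consumed beyond what Proposition \ref{flag_t_exactness} already packages. Everything else is a formal consequence of the definition of the Beilinson $t$-structure and the triangle \eqref{basic_Gr_triangle}, so modulo that anchoring check the proof is essentially a restatement of Proposition \ref{flag_t_exactness} through the dictionary of \S \ref{appendixrecall}.
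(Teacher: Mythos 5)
Your translation of Beilinson $t$-exactness into the two families of conditions (i) and (ii) is correct, and your proof of (ii) by descending induction on $r$ using the triangle \eqref{basic_Gr_triangle} is valid and is essentially the argument already carried out in the proof of Proposition \ref{Grass_Rad_Corollary}. So the proof works. The route, however, is a bit different from what the paper does: the paper does not unwind the $\mathrm{ev}$-side of the $t$-structure at all, but instead invokes Proposition \ref{t_strtucture_DF_cons}\,(a) (equivalently Corollary \ref{Beilinson_t_structure_Gr_i_Cor}\,(b)), which says that for a \emph{complete} object the functors $\textup{Gr}^i[i]$ already detect membership in the heart and in $\cD F_{\textup{cons}}^{\geq 0}$. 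That lets the paper pass directly from Proposition \ref{flag_t_exactness} (which is a statement purely about $\textup{Gr}^r_{\textup{F}}\circ\phi_*[r]$) to Proposition \ref{prop_revist}, without re-running the induction on the $\textup{F}^r$. Your approach avoids the completeness/limit argument at the cost of repeating the inductive bookkeeping; the paper's approach is shorter precisely because that bookkeeping was packaged once in the appendix.

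One concrete confusion in your last paragraph: condition (i) is the statement that $\textup{Gr}^r_{\textup{F}}\circ\phi_*[r]$ is right $t$-exact, since ${}^p\cD F_{\textup{cons}}^{\leq 0}$ is defined via the graded pieces, not via $\mathrm{ev}_r$. In the affine case this is \emph{immediate} from the $t$-exactness in Proposition \ref{flag_t_exactness}; the extra induction you run to show $\textup{F}^r\circ\phi_*$ is right $t$-exact is therefore unnecessary for the proposition, and the final phrase ``hence $\textup{F}^r\circ\phi_*[r]\in\cD^{\leq r}$ gives condition (i)'' is a misstatement, as (i) concerns $\textup{Gr}^r_{\textup{F}}$ rather than $\textup{F}^r$. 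Relatedly, the anchoring worry you raise about $\overline{\pi}^{\vee}$ being proper is moot: in the proof of Proposition \ref{Grass_Rad_Corollary}\,(b) the descending induction is anchored at $r=0$, where $\textup{F}^0=\textup{Gr}^0_{\textup{F}}$, not at the stabilized value $r\leq -N-1$. None of this damages the proof, but it should be cleaned up.
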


In the rest of this section, we assume $\phi \colon Z \to \bbP$ is quasi-finite with $Z/S$ affine. In particular Proposition \ref{prop_revist} implies that there is an equivalence of functors
\begin{equation}\label{prop_revisit_1}
    \textup{F} \circ \phi_* \circ {}^p\tau_{\leq -r} \simeq {}^B\tau_{\leq -r}\circ\textup{F}\circ\phi_*.
\end{equation}

Let $\overline{\pi}$ (resp. $\overline{\pi}^{\vee}$) denote the projection from $\bbP \times_S \textup{Fl}$ to $\bbP$ (resp.~$\textup{Fl}$). We now define a functor  
\[
\textup{P} \colon \cD_{\textup{cons}}(\bbP) \to \cD F_{\textup{cons}}(\textup{Fl}),
\]
\noindent such that for any sheaf $\KK$, $\textup{ev}_r(\textup{P}(\KK)):=\overline{\pi}^{\vee}_{*}\overline{\pi}^{\dagger}
{}^p\tau_{\leq -r}(\KK)$ and the obvious maps between them. Hence $\textup{Gr}^r(\textup{P}(\KK))=\cpH^{-r}(\KK)[r]$.

Combining Proposition \ref{prop_revist}, Equation (\ref{prop_revisit_1}) and the definition of the operator \textup{Dec} (see \S \ref{appendixrecall}, (e) above) we have the following result.

\begin{thm}\label{P_Dec_F_Functor}
  There is a natural equivalence of functors
  \[
  \mathrm{P} \circ \phi_* \simeq \mathrm{Dec}(\mathrm{F}\circ\phi_*).
  \]
\end{thm}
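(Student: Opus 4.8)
The plan is to assemble the theorem from three ingredients already available: the $t$-exactness statement of Proposition~\ref{prop_revist}, the comparison of truncations in Equation~(\ref{prop_revisit_1}), and the unwinding of the definition of $\textup{Dec}$ from \S\ref{appendixrecall}, (e). First I would recall that by definition $\textup{Dec}(\KK^{\bullet})^{r}=({}^B\tau_{\leq -r}\KK^{\bullet})(-\infty)$, so that for $\KK$ a sheaf on $Z$ we have
\[
\textup{Dec}(\textup{F}\circ\phi_*\KK)^{r}=\bigl({}^B\tau_{\leq -r}(\textup{F}\circ\phi_*\KK)\bigr)(-\infty).
\]
On the other side, by construction $\textup{ev}_r(\textup{P}(\phi_*\KK))=\overline{\pi}^{\vee}_{*}\overline{\pi}^{\dagger}\,{}^p\tau_{\leq -r}(\phi_*\KK)$. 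The heart of the matter is to identify these two, functorially in $\KK$ and compatibly with the transition maps in $r$.

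The key step is to invoke Equation~(\ref{prop_revisit_1}), which (using that $Z/S$ is affine and hence Proposition~\ref{prop_revist} gives genuine $t$-exactness, not merely left $t$-exactness) yields a natural equivalence ${}^B\tau_{\leq -r}\circ\textup{F}\circ\phi_*\simeq \textup{F}\circ\phi_*\circ{}^p\tau_{\leq -r}$ of functors $\cD_{\textup{cons}}(Z)\to\cD F_{\textup{cons}}(\textup{Fl})$. Applying the functor $(-)(-\infty)\colon\cD F_{\textup{cons}}(\textup{Fl})\to\cD_{\textup{indcons}}(\textup{Fl})$ (which on a finite filtered object picks out $\textup{ev}_r$ for $r$ sufficiently small, i.e. the full underlying object) to both sides gives
\[
\bigl({}^B\tau_{\leq -r}(\textup{F}\circ\phi_*\KK)\bigr)(-\infty)\simeq \bigl(\textup{F}(\phi_*\,{}^p\tau_{\leq -r}\KK)\bigr)(-\infty)=\textup{F}^{r'}(\phi_*\,{}^p\tau_{\leq -r}\KK)
\]
for $r'\ll 0$, which by the definition of $\textup{F}^{r'}$ in Equation~(\ref{filt_defn}) is exactly $\overline{\pi}^{\vee}_{*}\overline{\pi}^{\dagger}(\phi_*\,{}^p\tau_{\leq -r}\KK)$, i.e. $\textup{ev}_r(\textup{P}(\phi_*\KK))$. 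I would then check that the transition maps $\textup{Dec}(\textup{F}\circ\phi_*\KK)^{r}\to\textup{Dec}(\textup{F}\circ\phi_*\KK)^{r-1}$ correspond under this identification to the maps $\textup{ev}_r\textup{P}\to\textup{ev}_{r-1}\textup{P}$ induced by ${}^p\tau_{\leq -r}\to{}^p\tau_{\leq -r+1}$; this is a compatibility of the two adjunction/truncation systems that should follow formally once one knows the equivalence in (\ref{prop_revisit_1}) is natural in the filtration degree.

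I expect the main obstacle to be the bookkeeping needed to promote the pointwise (in $r$) equivalences into an equivalence of functors valued in the filtered $\infty$-category $\cD F_{\textup{cons}}(\textup{Fl})$ — that is, coherently, with all higher homotopies. The cleanest route is probably to avoid ever writing down the filtered object by hand and instead argue at the level of functors: $\textup{Dec}$ is \emph{defined} via ${}^B\tau_{\leq\bullet}$ and $(-)(-\infty)$, Equation~(\ref{prop_revisit_1}) already packages the $\infty$-categorical naturality of the comparison ${}^B\tau_{\leq-r}\circ\textup{F}\circ\phi_*\simeq\textup{F}\circ\phi_*\circ{}^p\tau_{\leq-r}$, and $\textup{P}$ is defined to be the composite of $\textup{F}\circ\phi_*$ with the perverse-truncation filtration on $\cD_{\textup{cons}}(\bbP)$; so the statement becomes: $\textup{Dec}$ applied to $\textup{F}\circ\phi_*$ computes the same filtered object as transporting the perverse-truncation filtration on the source through $\textup{F}\circ\phi_*$, which is precisely the content of $t$-exactness plus the general fact (used to define $\textup{Dec}$) that $\textup{Dec}$ of a filtered object whose Beilinson truncations are controlled recovers the décalée filtration. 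Modulo citing the appendix for the last general fact, the proof should be short.
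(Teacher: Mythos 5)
Your proof is correct and follows the same route as the paper: the paper's proof is a one-line citation of Proposition~\ref{prop_revist}, Equation~(\ref{prop_revisit_1}), and the definition of $\textup{Dec}$, and you have simply unwound those ingredients in detail. Your observation that $(\textup{F}(L))(-\infty)=\textup{F}^{r'}(L)=\overline{\pi}^{\vee}_{*}\overline{\pi}^{\dagger}L$ for $r'\ll 0$, together with the naturality already packaged in (\ref{prop_revisit_1}) and the fact that $\textup{Dec}$ preserves $\cD F_{\textup{cons}}$ (Proposition~\ref{t_strtucture_DF_cons}, (b)), is exactly the content the authors are compressing.
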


An immediate corollary to Theorem \ref{P_Dec_F_Functor} and Corollary \ref{Beilinson_t_structure_Gr_i_Cor}, (a) is the following comparison of associated spectral sequences \cite[Proposition 1.2.2.14]{HA} with values in perverse sheaves on $\textup{Fl}$. 

\begin{cor}\label{iso_spectral_sequence}
   For any sheaf $\KK$ on $Z$, there is a natural (in $\KK$) isomorphism\footnote{The isomorphism of the spectral sequences could also have been obtained using Postnikov systems at the level of derived categories, a detour to the $(\infty,1)$-world is not necessary.} between the constant perverse spectral sequence
   \[
   E_1^{i,j}=\cpH^{i+j}(\overline{\pi}^{\vee}_{*}\overline{\pi}^{\dagger}\cpH^{-j}(\phi_*\KK)[j]) \implies \cpH^{i+j}(\overline{\pi}^{\vee}_{*}\overline{\pi}^{\dagger}\phi_*\KK)
   \]
   \noindent and the  d\'ecal\'ee of the universal flag filtration spectral sequence
   \[
   E_1^{i,j}=\cpH^{i+j}(\textup{Gr}^{-j}_{\textup{F}}\phi_*\KK[j]) \implies \cpH^{i+j}(\overline{\pi}^{\vee}_{*}\overline{\pi}^{\dagger}\phi_*\KK).
   \]
   
\end{cor}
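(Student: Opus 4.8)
The plan is to deduce the isomorphism of spectral sequences directly from Theorem \ref{P_Dec_F_Functor} together with the machinery of the Beilinson $t$-structure on $\cD F_{\textup{cons}}(\textup{Fl})$ recalled in \S\ref{appendixrecall}. The key point is that any object of a stable $\infty$-category equipped with a $t$-structure gives rise, via its Postnikov/Whitehead tower, to a functorial spectral sequence converging to its cohomology; this is \cite[Proposition 1.2.2.14]{HA}. So the strategy is: (1) identify the flag filtration spectral sequence as the one associated to the object $\textup{F}\circ\phi_*\KK \in \cD F_{\textup{cons}}(\textup{Fl})$ with its Beilinson $t$-structure; (2) identify the constant perverse spectral sequence as the one associated to $\textup{P}\circ\phi_*\KK$; (3) invoke Theorem \ref{P_Dec_F_Functor} to get an equivalence $\textup{P}\circ\phi_*\simeq \textup{Dec}(\textup{F}\circ\phi_*)$ of the underlying filtered objects; and (4) check that applying $\textup{Dec}$ to a filtered object performs precisely the index shift (``décalage'') on the associated spectral sequence that turns one $E_1$-page into the other.

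First I would recall from Corollary \ref{Beilinson_t_structure_Gr_i_Cor}, (a) that for an object $\KK^{\bullet}$ of $\cD F_{\textup{cons}}(\textup{Fl})$, the Beilinson $t$-structure truncations compute, on the associated graded, the perverse cohomology: $\textup{Gr}^i({}^B\tau_{\leq 0}\KK^{\bullet})\simeq {}^p\tau_{\leq i}\textup{Gr}^i(\KK^{\bullet})$ and hence the $n$-th Beilinson cohomology object ${}^B\!\cpH^n(\KK^\bullet)$ has $i$-th graded piece $\cpH^{n+i}(\textup{Gr}^{-i}(\KK^\bullet))[i]$ wait — more precisely one reads off that the spectral sequence of the filtered object $\KK^\bullet$ for the Beilinson $t$-structure has $E_1$-page $E_1^{i,j}=\cpH^{i+j}(\textup{Gr}^{-j}(\KK^\bullet)[j])$ abutting to $\cpH^{i+j}(\textup{ev}_{-\infty}\KK^\bullet)$. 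Applying this to $\KK^\bullet=\textup{F}\circ\phi_*\KK$, and using $\textup{ev}_{-\infty}(\textup{F}\circ\phi_*\KK)=\overline{\pi}^{\vee}_*\overline{\pi}^{\dagger}\phi_*\KK$ together with $\textup{Gr}^r(\textup{F}\circ\phi_*\KK)=\textup{Gr}^r_{\textup{F}}\phi_*\KK$, yields exactly the second spectral sequence in the statement. Similarly, the object $\textup{P}\circ\phi_*\KK$ has $\textup{Gr}^r=\cpH^{-r}(\phi_*\KK)[r]$ (from the definition of $\textup{P}$ in \S\ref{Revisiting}) and $\textup{ev}_{-\infty}=\overline{\pi}^{\vee}_*\overline{\pi}^{\dagger}\phi_*\KK$, so — once we know its associated spectral sequence is computed the same way — it produces the first (constant perverse) spectral sequence. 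Here I would note that for $\textup{P}\circ\phi_*\KK$ one does not even need the Beilinson $t$-structure: its graded pieces are already concentrated in a single perverse degree, so the associated spectral sequence for \emph{any} reasonable filtration (equivalently, the perverse Postnikov tower of $\overline{\pi}^\vee_*\overline{\pi}^\dagger\phi_*\KK$) degenerates to this shape.

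The crux is step (4): understanding how $\textup{Dec}$ acts on spectral sequences. By definition $\textup{Dec}(\KK^\bullet)^i=({}^B\tau_{\leq -i}\KK^\bullet)(-\infty)$, i.e. $\textup{Dec}$ re-indexes the Beilinson–Postnikov tower of $\KK^\bullet$ as a new filtered object. This is the $\infty$-categorical incarnation of Deligne's décalée \cite[1.3.3]{Del71}, and the content I need is the classical fact that the spectral sequence of $\textup{Dec}(\KK^\bullet)$ (for the Beilinson $t$-structure, which on $\textup{Dec}(\KK^\bullet)$ reduces to the tautological one since its graded pieces ${}^B\cpH^{-i}(\KK^\bullet)$ are perverse-concentrated) is obtained from that of $\KK^\bullet$ by the standard décalage index change, which relates the $E_1$-page of one to (a renumbering of) the $E_1$-page of the other — precisely matching $\cpH^{i+j}(\textup{Gr}^{-j}_{\textup{F}}\phi_*\KK[j])$ on one side with $\cpH^{i+j}(\overline{\pi}^{\vee}_*\overline{\pi}^{\dagger}\cpH^{-j}(\phi_*\KK)[j])$ on the other. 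Combined with Theorem \ref{P_Dec_F_Functor}, which supplies the equivalence $\textup{P}\circ\phi_*\simeq\textup{Dec}(\textup{F}\circ\phi_*)$ \emph{as functors} into $\cD F_{\textup{cons}}(\textup{Fl})$, functoriality of the passage (filtered object) $\rightsquigarrow$ (spectral sequence) from \cite[Proposition 1.2.2.14]{HA} gives the natural (in $\KK$) isomorphism of spectral sequences.

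I expect the main obstacle to be bookkeeping rather than conceptual: pinning down the exact index conventions so that the décalage performed by $\textup{Dec}$ lines up digit-for-digit with the $E_1$-pages as written, and confirming that the Beilinson $t$-structure spectral sequence of $\textup{Dec}(\textup{F}\circ\phi_*\KK)$ genuinely agrees with the ``naive'' constant perverse spectral sequence of $\overline{\pi}^\vee_*\overline{\pi}^\dagger\phi_*\KK$ built from its perverse Postnikov tower (this uses that $\textup{Dec}$ of a left $t$-exact object — guaranteed by Proposition \ref{prop_revist} — lands in the heart-graded part, cf. Corollary \ref{Beilinson_t_structure_Gr_i_Cor}). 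The footnote in the statement already signals that all of this can alternatively be carried out with Postnikov systems at the triangulated level; I would mention that as a remark but run the argument through $\cD F_{\textup{cons}}$ since Theorem \ref{P_Dec_F_Functor} is stated there.
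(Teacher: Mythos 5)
Your proposal is correct and follows essentially the same route as the paper: it combines Theorem \ref{P_Dec_F_Functor} (the equivalence $\mathrm{P}\circ\phi_*\simeq\mathrm{Dec}(\mathrm{F}\circ\phi_*)$) with Corollary \ref{Beilinson_t_structure_Gr_i_Cor}, (a) and the spectral sequence of a filtered object from \cite[Proposition 1.2.2.14]{HA}. The paper itself states the result as an ``immediate corollary'' of precisely these ingredients, so your more detailed writeup (identifying both $E_1$-pages via $\textup{Gr}^{i}$, and checking that $\textup{Dec}$ realizes Deligne's d\'ecalage on spectral sequences) is an elaboration of the paper's argument rather than a different one.
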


\appendix 

\section{Beilinson $t$-structure on $\cD F_{\textup{cons}}(X)$}\label{appendix}

Throughout this appendix, we work over an algebraically closed field $k$ and a scheme $X/k$ a separated scheme of finite type. We also fix a coefficient ring $\Lambda$, a finite ring with torsion invertible in $k$ or an algebraic extension $E/\bbQ_{\ell}$.

By $D^b_c(X)$ we shall mean the bounded constructible derived category of \'etale sheaves on $X$ with coefficients in $\Lambda$. This comes equipped the standard \cite[1.1.2, (e)]{Del80} and perverse $t$-structure \cite[\S 4.0]{BBDG18}. 

\subsection{A first attempt}\label{first_attempt}
Our aim in this appendix is to construct a filtered version of $D^b_c(X)$ which meets the following three conditions

\begin{enumerate}[(a)]
    \item It has a lift of the perverse $t$-structure from $D^b_c(X)$. Let us call the truncation maps in this $t$-structure $(^{B}\tau_{\leq 0},{}^{B}\tau_{\geq 0})$.
    \item Objects in the filtered category should correspond to diagrams
    \[
    \KK^{\bullet}:=\{\cdots \to \KK^{i} \to \KK^{i-1} \to \KK^{i-2} \cdots\},
    \]
    \noindent where $\KK^i$'s are sheaves, thought of as the $i^{\mathrm{th}}$ level in the filtration.
    \item This category comes equipped with an endo functor $\textup{Dec}$, defined as follows
    \[
    \KK^{\bullet} \to \textup{Dec}(\KK^{\bullet})^{i}:={}^{B}\tau_{\leq -i}(\KK^{\bullet})(-\infty)
    \]
    meant to mimic Deligne's décalée \cite[1.3.3]{Del71}.
\end{enumerate}

Successful execution of these conditions would then imply P=Dec(F) at the level of filtered complexes as an immediate corollary to the $t$-exactness of a Brylinski-Radon transform.

As a first attempt one could begin with the filtered derived category $DF^b_c(X)$ \cite[1.1.2, (d)]{Del80}, whose construction is already quite delicate for $\Lambda=\bbQ_{\ell}$. One remedy would be to use the pro-\'etale site \cite{BS15} together with results of Schapira-Schneiders \cite{Schapira-Schneiders} which would naturally lead us to consider the derived category of the abelian category of covariant functors, $\textup{Func}(\bbZ^{\textup{op}},\textup{Sh}(X_{\textup{pro\'et}}))$\footnote{Here $\bbZ^{\textup{op}}$ is the category whose underlying poset is $\bbZ^{\textup{op}}$,}. Together with Beilinson's construction of f-categories \cite[Defintion A.1]{Bei87}, this would (almost meet) our conditions (a) and (b)\footnote{We say almost because one would have liked to consider not the derived category of $\textup{Func}(\bbZ^{\textup{op}},\textup{Sh}(X_{\textup{pro\'et}}))$, but rather $\textup{Func}(\bbZ^{\textup{op}},D^b_c(X_{\textup{pro\'et}}))$.}. This leaves us with (c). There are at least a couple of issues here
\begin{enumerate}[(i)]
    \item Already for $X=\Spec(k)$, $D^b_c(X)$ does not have arbitrary (even sequential colimits). A partial remedy for this would be restricting the filtrations we allow in (b) above.
    \item Even with the restricted class of filtrations, it is unclear in what sense $\mathrm{Dec}$ is a functor. $\mathrm{Dec}(\KK^{\bullet})$ is defined using the $t$-structure on the derived category (which in turn is a lift of the perverse $t$-structure again defined on $D^b_c(X)$).
\end{enumerate}

\subsubsection{Strategy of proof}To remedy the above issues we mimic the approach of \cite[\S 5.1]{BMS19} where this was carried out for $D(R)$, the stable $\infty$ category of $R$-modules. In particular, we would like an equivalent of their Theorem 5.4, which as the authors point out is a transport of Beilinson's construction of f-categories to $D(R)$.

In the rest of the appendix we shall work with $\infty$-categories \cite[\S 1.1.2]{HTT} and in particular stable $\infty$-categories \cite[Definition 1.1.9]{HA}. We will aim to construct a symmetric monoidal stable $\infty$-category denoted by $\cD F_{\textup{indcons}}(X)$ which will satisfy (a)-(c) from \S \ref{first_attempt}. Our desired category $\cD F_{\textup{cons}}(X)$ will be a full subcategory of $\textup{Func}(\bbZ^{\textup{op}},\cD_{\textup{indcons}}(X))$\footnote{In the rest of the appendix unless specified ordinary categories are to be thought of as $\infty$-categories \cite[Proposition 1.1.2.2]{HTT}.}, where $\cD_{\textup{indcons}}(X)$ is itself a symmetric monoidal \textit{presentable} stable $\infty$-category.

To achieve our goal of constructing a category that satisfies (a)-(c) above we need to 
\begin{enumerate}[(i)]
    \item define the category $\cD_{\textup{indcons}}(X)$ and equip it with a perverse $t$-structure\footnote{A $t$-structure on a stable $\infty$-category is by definition a $t$-structure on its homotopy category, which is a triangulated category (see \cite[Definition 1.2.1.4]{HA} ).},
    \item lift this $t$-structure to the functor category $\textup{Func}(\bbZ^{\textup{op}},\cD_{\textup{indcons}}(X))$ and,
    \item show that the $t$-structure restricts to one on $\cD F_{\textup{cons}}(X)$.
\end{enumerate}

\subsection{The perverse $t$-structure on $\cD_{indcons}(X)$}
In this section, we will deal with (i) above, and lift the $t$-structure to $\cD F_{\textup{indcons}}(X)$ in the next section. As before $X/k$ is a separated scheme of finite type and $\Lambda$ is a coefficient ring. 

\subsubsection{The category $\cD_{cons}(X)$}\label{D_cons} In \cite[Definition 1.1]{HRS23} the authors construct a symmetric, monoidal and stable $\infty$-category $\cD_{\textup{cons}}(X)$ with the following properties.

\begin{enumerate}[(a)]
    \item Their underlying homotopy categories agree with the usual derived category $D^b_c(X)$ \cite[Proposition 7.1, Theorem 7.7, Lemma 7.9]{HRS23}. In particular the categories $\cD_{\textup{cons}}(X)$ come equipped with a perverse $t$-structure $({}^{p}\cD_{\textup{cons}}^{\leq 0},{}^p\cD_{\textup{cons}}^{\geq 0})$.
    \item The usual six-functor formalism extends to the assignment $X \to \cD_{\textup{cons}}(X)$ \cite[Remark 7.10]{HRS23}.
\end{enumerate}

\subsubsection{The category $\textup{Ind}(\cD_{cons}(X))$}\label{properties_ind_cons}
Let $\textup{Ind}(\cD_{cons}(X))$ be the Ind-completion \cite[Lemma 5.3.2.9]{HA} of $\cD_{cons}(X)$. 
\begin{enumerate}[(a)]
    \item This category is symmetric monoidal presentable stable $\infty$-category. We shall denote this by $\cD_{\textup{indcons}}(X)$ (see \cite[Corollary 8.3]{HRS23} for why this notation is reasonable).
    \item $\cD_{\textup{cons}}(X)$ is a full subcategory of $\cD_{\textup{indcons}}(X)$ and consists precisely of the compact objects \cite[Proposition 8.2]{HRS23}.
    \item The perverse $t$-structure on $\cD_{\textup{cons}}(X)$ extends uniquely to one on $\cD_{\textup{indcons}}(X)$, denoted by $({}^{p}\cD_{\textup{cons}}^{\leq 0},{}^p\cD_{\textup{cons}}^{\geq 0})$ \cite[Lemma 6.1.2, (a)]{BKV22}. We call this the perverse $t$-structure on $\cD_{\textup{indcons}}(X)$.
    \item Moreover $^{p}\cD_{\textup{indcons}}^{\leq 0}=\textup{Ind}(^{p}\cD_{\textup{cons}}^{\leq 0})$, $^{p}\cD_{\textup{indcons}}^{\geq 0}=\textup{Ind}(^{p}\cD_{\textup{cons}}^{\geq 0})$. Further $^{p}\cD_{\textup{Indcons}}^{\leq 0}$ is closed under colimits and $^{p}\cD_{\textup{indcons}}^{\geq 0}$ is closed under limits \cite[Lemma 6.1.2, (a), (b)]{BKV22}. 
\end{enumerate}

\subsection{The category $\cD F_{\textup{indcons}}(X)$}
We continue using the notation from previous sections. We define 

\[
\cD F_{\textup{indcons}}(X):=\textup{Func}(\bbZ^{\textup{op}},\cD_{\textup{indcons}}(X)).
\]
We shall denote an object in $\cD F_{\textup{indcons}}(X)$ by $\KK^{\bullet}$.
\subsubsection{Some basic operations on $\cD F_{\textup{indcons}}(X)$}\label{operations_Filtered}
\begin{enumerate}[(a)]
    \item $\cD F_{\textup{indcons}}(X)$ is a symmetric monoidal presentable stable $\infty$-category \cite[\S 2.23]{GP18}.
    \item For any integer $i \in \bbZ$, there is a functor $\textup{ev}_{i} \colon \cD F_{\textup{indcons}}(X) \to \cD_{\textup{indcons}}(X)$, which on objects will be denoted by $\KK^{\bullet} \to \KK^i$.
    \item For any integer $i \in \bbZ$, there is a functor $\textup{Gr}^{i} \colon \cD F_{\textup{indcons}}(X) \to \cD_{\textup{indcons}}(X)$, which on objects is given by $\KK^{\bullet} \to \textup{cofib}(\KK^{i+1} \to \KK^{i})$.
    \item We also have a functor sending $\KK \to \KK(-\infty):=\colim \KK^{i}$.
    \item An object $\KK^{\bullet}$ is said to be \textit{complete} if $\lim_i \KK^{i}=0$.
    \item The $\textup{ev}_{i}$ and $\textup{Gr}^{i}$ functors commute with all limits and colimits \cite[\S 2]{GP18}. In particular, they are exact.
    \item Finally we note that $\textup{ev}_i$ has a left adjoint, denoted by $\textup{ins}_{i}$, which sends an object $\mathrm{L} \to \textup{ins}_{i}(\mathrm{L})^{\bullet}$ with $(\textup{ins}_{i}(\mathrm{L}))^{j}=\mathrm{L}$ for $j \leq i$ and $0$ otherwise. 
    \item The functors $\{\textup{Gr}^{i}\}_{i \in \bbZ}$ form a conservative family of functors on complete objects \cite[\S 2.7]{GP18}.
    
\end{enumerate}

Now define full subcategories $({}^p\cD F_{\textup{indcons}}^{\leq 0},{}^p\cD F_{\textup{indcons}}^{\geq 0})$ as follows

\[
^p\cD F_{\textup{indcons}}^{\leq 0}:=\{\KK^{\bullet};\textup{Gr}^{i}(\KK^{\bullet}) \in{}^{p}\cD_{\textup{indcons}}^{\leq i}\},
\]
\noindent and dually

\[
^p\cD F_{\textup{indcons}}^{\geq 0}:=\{\KK^{\bullet};\KK^{i} \in{}^{p}\cD_{\textup{indcons}}^{\geq i}\}.
\]

In the next section, we shall prove the following result mimicking the proof in \cite[Theorem 5.3]{BMS19}, where it is proved for the filtered $\infty$-category of $R$-modules.

\begin{thm}(Beilinson)\label{Beilinson_t_structure}
    The subcategories $({}^p\cD F_{\textup{indcons}}^{\leq 0},{}^p\cD F_{\textup{indcons}}^{\geq 0})$ define a $t$-structure on $\cD F_{\textup{indcons}}(X)$.
\end{thm}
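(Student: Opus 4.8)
The plan is to verify the three defining axioms of a $t$-structure on the stable $\infty$-category $\cD F_{\textup{indcons}}(X)$ for the pair $({}^p\cD F_{\textup{indcons}}^{\leq 0},{}^p\cD F_{\textup{indcons}}^{\geq 0})$: the shift/containment property, the Hom-vanishing $\Hom(\mathrm{A},\mathrm{B}[-1])=0$ for $\mathrm{A}\in{}^p\cD F^{\leq 0}$ and $\mathrm{B}\in{}^p\cD F^{\geq 0}$, and the existence of truncation triangles. Following the template of \cite[Theorem 5.3]{BMS19}, the heart of the matter is to \emph{construct} the truncation functors explicitly level by level, using the perverse truncations already available on $\cD_{\textup{indcons}}(X)$ from \S\ref{properties_ind_cons}, (c)–(d) (where the one-sided subcategories are closed under colimits/limits respectively). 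The first step is to record closure properties of the two subcategories: ${}^p\cD F_{\textup{indcons}}^{\leq 0}$ is closed under extensions, positive shifts, and colimits (since $\textup{Gr}^i$ commutes with colimits by \S\ref{operations_Filtered}, (f) and ${}^p\cD_{\textup{indcons}}^{\leq i}$ is closed under colimits), and dually ${}^p\cD F_{\textup{indcons}}^{\geq 0}$ is closed under extensions, negative shifts, and limits (since $\textup{ev}_i$ commutes with limits). The shift axiom ${}^p\cD F^{\leq 0}\subseteq{}^p\cD F^{\leq 1}$ etc.\ is then immediate from the corresponding statement downstairs.

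For the Hom-vanishing, given $\mathrm{A}^{\bullet}\in{}^p\cD F^{\leq 0}$ and $\mathrm{B}^{\bullet}\in{}^p\cD F^{\geq 0}$, I would compute the mapping spectrum in the functor category $\textup{Func}(\bbZ^{\textup{op}},\cD_{\textup{indcons}}(X))$ as a limit over the arrow category of $\bbZ^{\textup{op}}$, or more efficiently use the cofiber sequences relating $\mathrm{A}^{\bullet}$ to its graded pieces via the $\textup{ins}_i$ functors (\S\ref{operations_Filtered}, (g)): since $\textup{ev}_i\dashv$ — rather, $\textup{ins}_i\dashv\textup{ev}_i$ exhibits $\mathrm{A}^{\bullet}$ as built from $\textup{ins}_i(\textup{Gr}^i\mathrm{A}^{\bullet})$, one reduces to $\Hom_{\cD F}(\textup{ins}_i(\mathrm{P}),\mathrm{B}^{\bullet}[-1])$ with $\mathrm{P}\in{}^p\cD_{\textup{indcons}}^{\leq i}$, and by adjunction this is $\Hom_{\cD_{\textup{indcons}}}(\mathrm{P},\mathrm{B}^i[-1])$ — wait, one must be careful which adjoint is relevant; the clean route is that $\mathrm{A}^{\bullet}\in{}^p\cD F^{\leq 0}$ admits a (possibly infinite, but convergent after completion — handled on the subcategory $\cD F_{\textup{cons}}$, but here we are on all of $\cD F_{\textup{indcons}}$) filtration with associated graded $\textup{ins}_i(\textup{Gr}^i\mathrm{A})$, each contributing $\Hom(\textup{Gr}^i\mathrm{A}, (\mathrm{B}^{\bullet})^{\geq i}$-type term$)$ which vanishes because $\textup{Gr}^i\mathrm{A}\in{}^p\cD^{\leq i}$ while the relevant target sits in ${}^p\cD^{\geq i}$. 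I expect this bookkeeping — matching indices correctly and justifying convergence of the relevant (co)limits in the ind-completed setting — to be the main obstacle, and it is precisely the place where the closure-under-colimits/limits facts from \cite[Lemma 6.1.2]{BKV22} are indispensable.

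For the truncation triangles, the plan is the explicit construction: given $\mathrm{K}^{\bullet}$, define ${}^B\tau_{\geq 0}\mathrm{K}^{\bullet}$ by the formula $({}^B\tau_{\geq 0}\mathrm{K}^{\bullet})^i := {}^p\tau_{\geq i}\mathrm{K}^i$ with transition maps induced by functoriality of ${}^p\tau_{\geq i}$ and the maps $\mathrm{K}^{i+1}\to\mathrm{K}^i$ composed with ${}^p\tau_{\geq i+1}\mathrm{K}^{i+1}\to{}^p\tau_{\geq i}\mathrm{K}^{i+1}\to{}^p\tau_{\geq i}\mathrm{K}^i$; one checks directly that each level lies in ${}^p\cD^{\geq i}$ so this object is in ${}^p\cD F^{\geq 0}$. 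The fiber $\mathrm{C}^{\bullet}$, with $\mathrm{C}^i = {}^p\tau_{\leq i-1}\mathrm{K}^i$ — hmm, that would not obviously have the right graded pieces; the correct recipe (as in \cite{BMS19}) is subtler and defines the connective cover so that $\textup{Gr}^i$ of the fiber computes ${}^p\tau_{\leq i}\textup{Gr}^i\mathrm{K}^{\bullet}$, which requires interleaving the level-wise truncations with a shift. I would therefore set up the candidate truncation as a functor built from the $({}^p\tau_{\leq i},{}^p\tau_{\geq i})$ applied with an index shift, verify via the octahedral/fiber-sequence axioms in the stable $\infty$-category that $\textup{Gr}^i$ of the resulting fiber and cofiber land in ${}^p\cD^{\leq i}$ and ${}^p\cD^{\geq i}$ respectively (using that $\textup{Gr}^i$ is exact, \S\ref{operations_Filtered}, (f)), and conclude. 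Once the two truncations are constructed and shown to land in the correct subcategories, the defining triangle $\tau_{\geq 1}\mathrm{K}^{\bullet}\to\mathrm{K}^{\bullet}\to\tau_{\leq 0}\mathrm{K}^{\bullet}$ follows formally, completing the verification.
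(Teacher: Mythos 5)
Your proposal takes the route of directly verifying the three $t$-structure axioms, with the existence of truncation triangles established by an explicit level-by-level construction. This is genuinely different from the paper's argument, and it is where the gap lies. The formula $({}^B\tau_{\geq 0}\mathrm{K}^{\bullet})^i := {}^p\tau_{\geq i}\mathrm{K}^i$ does not define an object of $\cD F_{\textup{indcons}}(X)$: for $m\leq n$ the natural transformation between truncations goes ${}^p\tau_{\geq m}\to{}^p\tau_{\geq n}$ (the universal property of $\tau_{\geq m}$ provides a factorization of the unit $X\to\tau_{\geq n}X$ through $\tau_{\geq m}X$, not the other way), so the first arrow in your proposed transition ${}^p\tau_{\geq i+1}\mathrm{K}^{i+1}\to{}^p\tau_{\geq i}\mathrm{K}^{i+1}\to{}^p\tau_{\geq i}\mathrm{K}^i$ does not exist. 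And even granting the transitions, the levelwise formula computes the wrong object: for the constant filtered object $\mathrm{K}^{\bullet}$ with $\mathrm{K}^i=\mathrm{K}\neq 0$ and identity transitions one has $\textup{Gr}^i\mathrm{K}^{\bullet}=0$ for all $i$, so $\mathrm{K}^{\bullet}\in{}^p\cD F^{\leq -n}$ for every $n$ and ${}^B\tau_{\geq 0}\mathrm{K}^{\bullet}=0$, whereas $({}^p\tau_{\geq i}\mathrm{K})_i$ is nonzero. You flag that ``the correct recipe is subtler'' and requires an index shift, but you never produce it; this is not a bookkeeping issue that can be patched later --- there is no closed-form levelwise truncation, and producing the truncation functor \emph{is} the theorem. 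Your Hom-vanishing sketch has an analogous problem: an arbitrary object of $\cD F_{\textup{indcons}}(X)$ is neither complete nor exhaustive, so the ``filtration'' by $\textup{ins}_i(\textup{Gr}^i\mathrm{A})$ does not converge to $\mathrm{A}^{\bullet}$ as a limit or a colimit, and the reduction you describe does not go through without further hypotheses.

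The paper (following \cite[Theorem 5.3]{BMS19}) sidesteps the explicit construction entirely. The key observation is the one you already make --- ${}^p\cD F_{\textup{indcons}}^{\leq 0}$ is closed under colimits, because ${}^p\cD_{\textup{indcons}}^{\leq i}$ is and $\textup{Gr}^i$ preserves colimits --- but the paper then invokes presentability to conclude that the inclusion ${}^p\cD F_{\textup{indcons}}^{\leq 0}\hookrightarrow\cD F_{\textup{indcons}}(X)$ has a right adjoint $R$. This gives the would-be truncation triangle $R(\mathrm{K}^{\bullet})\to\mathrm{K}^{\bullet}\to Q(\mathrm{K}^{\bullet})$ for free, and the remaining work is to show $Q(\mathrm{K}^{\bullet})\in{}^p\cD F_{\textup{indcons}}^{>0}$, which is done by reducing (via the adjunction $\textup{ins}_i\dashv\textup{ev}_i$) to contractibility of $\textup{Map}(\textup{ins}_i\mathrm{K}',Q(\mathrm{K}^{\bullet}))$ for $\mathrm{K}'\in{}^p\cD_{\textup{indcons}}^{\leq i}$, and then exhibiting a splitting from the counit of $R$. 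This is also precisely why the construction is carried out in the ind-completed category $\cD F_{\textup{indcons}}(X)$ rather than directly in $\cD F_{\textup{cons}}(X)$: presentability is what makes the adjoint exist, and the $t$-structure is only afterwards restricted to $\cD F_{\textup{cons}}(X)$ in Proposition~\ref{t_strtucture_DF_cons}.
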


\subsection{Proof of Theorem \ref{Beilinson_t_structure}}\label{proof_Beilinson_t_Structure}
\begin{proof}
    We begin by noting that $^{p}\cD_{\textup{Indcons}}^{\leq i}$ is stable under colimits (see \S \ref{properties_ind_cons}, (d)). Moreover $\textup{Gr}^{i}$ commutes with colimits (see \S \ref{operations_Filtered},(c)), and thus by definition, $^p\cD F_{\textup{indcons}}^{\leq 0}$ is stable under colimits. By presentability\footnote{This step prevented us from directly using the more obvious category $\cD F_{\textup{cons}}(X)$.} the inclusion $^p\cD F_{\textup{indcons}}^{\leq 0} \subset{}^p\cD F_{\textup{indcons}}(X)$ has a right adjoint, which we denote by $R \colon{}^p\cD F_{\textup{indcons}}(X) \to{}^p\cD F_{\textup{indcons}}^{\leq 0}$. In particular given any object $\KK^{\bullet}$ in $^p\cD F_{\textup{indcons}}(X)$, we have a fiber sequence
    \begin{equation}\label{proof_Beilinson_t_structure_1}
    R(\KK^{\bullet}) \to \KK^{\bullet} \to Q(\KK^{\bullet}).
    \end{equation}
    \noindent Thus to complete the proof of the theorem it suffices to show that $Q(\KK^{\bullet}) \in{}^p\cD F_{\textup{indcons}}^{>0}$ or equivalently that the space $\textup{Map}_{\cD_{\textup{indcons}}(X)}(\KK',Q(\KK^{\bullet})^{i})$ is contractible for $\KK' \in{}^p\cD_{\textup{indcons}}^{\leq i}$. By adjunction (see \S \ref{operations_Filtered}, (e)) we have 
    \[
    \textup{Map}_{\cD_{\textup{indcons}}(X)}(\KK',Q(\KK^{\bullet})^{i})=\textup{Map}_{\cD F_{\textup{indcons}}(X)}(\textup{ins}_{i}\KK',Q(\KK^{\bullet})),
    \]
\noindent and thus it is enough to show that $\textup{Map}_{\cD F_{\textup{indcons}}(X)}(\textup{ins}_{i}\KK',Q(\KK^{\bullet}))$ is contractible for any $\KK' \in{}^p\cD_{\textup{indcons}}^{\leq i}$. Consider the diagram below obtained as a pullback of the fibre sequence in Diagram (\ref{proof_Beilinson_t_structure_1}) along the map $\eta \in \textup{Map}_{\cD F_{\textup{indcons}}(X)}(\textup{ins}_{i}\KK',Q(\KK^{\bullet}))$

\begin{equation}\label{proof_Beilinson_t_structure_2}
    \begin{tikzcd}
        R(\KK^{\bullet}) \arrow[r] \arrow[d,equal] & \textup{L}^{\bullet} \arrow[r] \arrow[dl,dotted] \arrow[d] &\textup{ins}_i(\KK') \arrow[d,"\eta"] \\
        R(\KK^{\bullet}) \arrow[r] & \KK^{\bullet} \arrow[r] & Q(\KK^{\bullet}).
    \end{tikzcd}
\end{equation}

Recall that $\KK' \in{}^p\cD_{\textup{indcons}}^{\leq i}$, hence $\textup{ins}_i(\KK')$ is in ${}^p\cD F_{\textup{indcons}}^{\leq 0}$ by definition of the functor $\textup{ins}_i(\KK')$. This implies $\mathrm{L}^{\bullet}$ is also in $^p\cD F_{\textup{indcons}}^{\leq 0}$ (since $R(\KK^{\bullet}) \in{}^p\cD F_{\textup{indcons}}^{\leq 0}$ by construction and ${}^p\cD F_{\textup{indcons}}^{\leq 0}$ is closed under extensions). Since $R(\KK^{\bullet}) \to \KK^{\bullet}$ is the counit of the adjunction and $\mathrm{L}^{\bullet} \in {}^p\cD F_{\textup{indcons}}^{\leq 0}$, there exists a dotted arrow in Diagram (\ref{proof_Beilinson_t_structure_2}) which provides a splitting for the top row. This implies that $\eta$ is $0$ (in the homotopy category) as required.

\end{proof}

\begin{nota}\label{Beilinson_t_structure_notat}
 We shall denote the truncations for the $t$-structure induced on $\cD F_{\textup{indcons}}(X)$ via Theorem \ref{proof_Beilinson_t_Structure} by $(^{B}\tau_{\leq 0},{}^{B}\tau_{\geq 0})$.
\end{nota}

\begin{lem}\label{Beilinson_t_structure_Gr_i}
The functors $\textup{Gr}^{i}[i] \colon \cD F_{\textup{indcons}}(X) \to \cD_{\textup{indcons}}(X)$ are $t$-exact for the Beilinson $t$-structure on the source and perverse $t$-structure on the target.
\end{lem}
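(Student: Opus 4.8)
The plan is to check the two inclusions defining $t$-exactness directly against the definitions of the Beilinson $t$-structure on $\cD F_{\textup{indcons}}(X)$ and the perverse $t$-structure on $\cD_{\textup{indcons}}(X)$; no substantive argument beyond unwinding definitions and invoking that the coaisle of a $t$-structure is closed under extensions is needed.

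For right $t$-exactness, I would simply take $\KK^{\bullet}\in{}^p\cD F_{\textup{indcons}}^{\leq 0}$. By the definition of ${}^p\cD F_{\textup{indcons}}^{\leq 0}$ one has $\textup{Gr}^{i}(\KK^{\bullet})\in{}^p\cD_{\textup{indcons}}^{\leq i}$, which is exactly the statement that $\textup{Gr}^{i}[i](\KK^{\bullet})\in{}^p\cD_{\textup{indcons}}^{\leq 0}$. This is a tautology once one recalls that $\cM[i]\in{}^p\cD_{\textup{indcons}}^{\leq 0}$ if and only if $\cM\in{}^p\cD_{\textup{indcons}}^{\leq i}$.

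For left $t$-exactness, take $\KK^{\bullet}\in{}^p\cD F_{\textup{indcons}}^{\geq 0}$, so that $\KK^{j}\in{}^p\cD_{\textup{indcons}}^{\geq j}$ for every $j$; I must show $\textup{Gr}^{i}(\KK^{\bullet})=\textup{cofib}(\KK^{i+1}\to\KK^{i})\in{}^p\cD_{\textup{indcons}}^{\geq i}$. Writing $\mathrm{F}:=\textup{fib}(\KK^{i+1}\to\KK^{i})$, so that $\textup{Gr}^{i}(\KK^{\bullet})=\mathrm{F}[1]$, I would rotate the fiber sequence $\mathrm{F}\to\KK^{i+1}\to\KK^{i}$ into $\KK^{i}[-1]\to\mathrm{F}\to\KK^{i+1}$, which exhibits $\mathrm{F}$ as an extension of $\KK^{i+1}$ by $\KK^{i}[-1]$. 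Now $\KK^{i+1}\in{}^p\cD_{\textup{indcons}}^{\geq i+1}$ by hypothesis, and $\KK^{i}[-1]\in{}^p\cD_{\textup{indcons}}^{\geq i+1}$ since $\KK^{i}\in{}^p\cD_{\textup{indcons}}^{\geq i}$. As the coaisle ${}^p\cD_{\textup{indcons}}^{\geq i+1}$ is closed under extensions, $\mathrm{F}\in{}^p\cD_{\textup{indcons}}^{\geq i+1}$, hence $\textup{Gr}^{i}(\KK^{\bullet})=\mathrm{F}[1]\in{}^p\cD_{\textup{indcons}}^{\geq i}$, that is, $\textup{Gr}^{i}[i](\KK^{\bullet})\in{}^p\cD_{\textup{indcons}}^{\geq 0}$.

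There is no genuine obstacle here; the only points requiring attention are the shift bookkeeping and the observation that membership in ${}^p\cD F_{\textup{indcons}}^{\geq 0}$ pins down $\KK^{i+1}$ in ${}^p\cD_{\textup{indcons}}^{\geq i+1}$ (a strictly stronger aisle than ${}^p\cD_{\textup{indcons}}^{\geq i}$), which is precisely what allows the extension argument for $\mathrm{F}$ to land in ${}^p\cD_{\textup{indcons}}^{\geq i+1}$ rather than merely ${}^p\cD_{\textup{indcons}}^{\geq i-1}$. Combining the two inclusions gives the $t$-exactness of $\textup{Gr}^{i}[i]$.
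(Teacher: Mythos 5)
Your proof is correct and follows essentially the same route as the paper's: right $t$-exactness is read off directly from the definition of ${}^p\cD F_{\textup{indcons}}^{\leq 0}$, and left $t$-exactness comes from exhibiting $\textup{Gr}^{i}[i]\KK^{\bullet}$ (via the rotated fiber sequence $\KK^{i}[i]\to\textup{Gr}^{i}\KK^{\bullet}[i]\to\KK^{i+1}[i+1]$, which your $\KK^{i}[-1]\to\mathrm{F}\to\KK^{i+1}$ becomes after shifting by $[i+1]$) as an extension of objects in ${}^p\cD_{\textup{indcons}}^{\geq 0}$ and invoking closure of the coaisle under extensions. The only cosmetic difference is your bookkeeping through the fiber $\mathrm{F}$ rather than directly through the shifted cofiber; the substance is identical.
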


\begin{proof}
That $\textup{Gr}^{i}[i]$ is right $t$-exact is clear from the definition. For the left $t$-exactness note that for any $\KK^{\bullet}$ there is a fibre sequence $\KK^{i}[i] \to \textup{Gr}^{i}\KK^{\bullet}[i] \to \KK^{i+1}[i+1]$, and hence the left $t$-exactness follows from the fact that $\cD^{\geq 0}_{\textup{indcons}}$ is closed under extensions.
\end{proof}

\begin{cor}\label{Beilinson_t_structure_Gr_i_Cor}
We have the following corollary to Lemma \ref{Beilinson_t_structure_Gr_i}.
    \begin{enumerate}[(a)]
        \item There is a natural isomorphism $\textup{Gr}^{i}\circ{}^{B}\tau_{\leq j} \simeq{}^p\tau_{\leq i+j}\textup{Gr}^{i}$ (See \cite[Theorem 5.4, (2)]{BMS19}).
        \item A complete object $\KK^{\bullet}$ in $\cD F_{\textup{indcons}}(X)$ is in the heart of the Beilinson $t$-structure if and only if the $\textup{Gr}^{i}\KK[i]$ belong to the heart for all $i \in \bbZ$.
    \end{enumerate}
\end{cor}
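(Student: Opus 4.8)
The plan is to derive both parts formally from the $t$-exactness of $\textup{Gr}^{i}[i]$ established in Lemma \ref{Beilinson_t_structure_Gr_i}; the only step with genuine content is a completeness argument needed for the converse in part (b).

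For (a), I would invoke the standard fact that a $t$-exact functor commutes with truncations. Applying $\textup{Gr}^{i}[i]$ to the Beilinson truncation triangle ${}^{B}\tau_{\leq j}\KK^{\bullet}\to\KK^{\bullet}\to{}^{B}\tau_{\geq j+1}\KK^{\bullet}$, Lemma \ref{Beilinson_t_structure_Gr_i} places the two outer terms in ${}^{p}\cD_{\textup{indcons}}^{\leq j}$ and ${}^{p}\cD_{\textup{indcons}}^{\geq j+1}$ respectively, so uniqueness of the perverse truncation triangle gives a natural equivalence $\textup{Gr}^{i}[i]\circ{}^{B}\tau_{\leq j}\simeq{}^{p}\tau_{\leq j}\circ\textup{Gr}^{i}[i]$. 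Rewriting the right-hand side via ${}^{p}\tau_{\leq j}(M[i])\simeq({}^{p}\tau_{\leq i+j}M)[i]$ and cancelling the shift $[i]$ yields $\textup{Gr}^{i}\circ{}^{B}\tau_{\leq j}\simeq{}^{p}\tau_{\leq i+j}\circ\textup{Gr}^{i}$, with naturality inherited from that of the truncations.

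For (b), the implication from "$\KK^{\bullet}$ in the heart" to "$\textup{Gr}^{i}\KK^{\bullet}[i]$ in the heart for all $i$" is immediate, since a $t$-exact functor carries hearts to hearts (Lemma \ref{Beilinson_t_structure_Gr_i} again), and needs no completeness. For the converse, suppose $\KK^{\bullet}$ is complete and $\textup{Gr}^{i}\KK^{\bullet}\in{}^{p}\cD_{\textup{indcons}}^{\leq i}\cap{}^{p}\cD_{\textup{indcons}}^{\geq i}$ for every $i$. The upper bound on all the $\textup{Gr}^{i}$ is, by definition of the Beilinson $t$-structure, exactly $\KK^{\bullet}\in{}^{p}\cD F_{\textup{indcons}}^{\leq 0}$, so it remains to show $\KK^{i}\in{}^{p}\cD_{\textup{indcons}}^{\geq i}$ for all $i$. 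I would reconstruct $\KK^{i}$ from the graded pieces: set $C_{n}^{i}:=\textup{cofib}(\KK^{i+n}\to\KK^{i})$ for $n\geq 0$, so $C_{0}^{i}=0$ and $C_{1}^{i}=\textup{Gr}^{i}\KK^{\bullet}$; the octahedral axiom applied to $\KK^{i+n}\to\KK^{i+n-1}\to\KK^{i}$ produces fibre sequences $\textup{Gr}^{i+n-1}\KK^{\bullet}\to C_{n}^{i}\to C_{n-1}^{i}$, and since $\textup{Gr}^{i+n-1}\KK^{\bullet}\in{}^{p}\cD_{\textup{indcons}}^{\geq i+n-1}\subseteq{}^{p}\cD_{\textup{indcons}}^{\geq i}$ for $n\geq 1$ while ${}^{p}\cD_{\textup{indcons}}^{\geq i}$ is closed under extensions, induction on $n$ gives $C_{n}^{i}\in{}^{p}\cD_{\textup{indcons}}^{\geq i}$. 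Completeness then gives $\lim_{n}\KK^{i+n}=\lim_{j}\KK^{j}=0$, whence $\lim_{n}C_{n}^{i}\simeq\KK^{i}$ (a cofibre is a shifted fibre, and limits commute with fibres); as ${}^{p}\cD_{\textup{indcons}}^{\geq i}$ is closed under limits (\S\ref{properties_ind_cons}, (d)), we conclude $\KK^{i}\in{}^{p}\cD_{\textup{indcons}}^{\geq i}$, hence $\KK^{\bullet}\in{}^{p}\cD F_{\textup{indcons}}^{\geq 0}$ and so $\KK^{\bullet}$ lies in the heart.

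I expect the main obstacle to be precisely this last reconstruction step — reading off the perversity of the individual filtration steps $\KK^{i}$ from that of the graded pieces. It is the only place where the completeness hypothesis and the closure of ${}^{p}\cD_{\textup{indcons}}^{\geq 0}$ under limits are used, and the converse in (b) genuinely fails without completeness; everything else is formal bookkeeping with $t$-exact functors and truncation triangles.
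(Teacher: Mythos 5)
Your proof is correct and follows the same route as the paper's: part (a) is read off formally from the $t$-exactness of $\textup{Gr}^i[i]$ (Lemma \ref{Beilinson_t_structure_Gr_i}), and the converse in part (b) reconstructs $\KK^i$ as $\lim_n\textup{cofib}(\KK^{i+n}\to\KK^i)$ using completeness and then appeals to closure of ${}^p\cD^{\geq 0}_{\textup{indcons}}$ under limits. The paper compresses the inductive check that each finite cofibre $C_n^i$ lies in ${}^p\cD^{\geq i}_{\textup{indcons}}$ into a single phrase, whereas you spell it out via the octahedral fibre sequences $\textup{Gr}^{i+n-1}\KK^{\bullet}\to C_n^i\to C_{n-1}^i$; this is a welcome expansion of a step the paper leaves implicit, and also silently corrects a small typo in the paper's expression for the cofibres (the map must go $\KK^j\to\KK^i$, not the reverse).
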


\begin{proof}
Statement (a) is immediate from $t$-exactness of $\textup{Gr}^i[i]$ (Lemma \ref{Beilinson_t_structure_Gr_i}).

For (b) observe that the only if the direction is clear from Lemma \ref{Beilinson_t_structure_Gr_i} and is valid without the completeness hypothesis. We now prove the `if' direction of the corollary. For an object $\KK^{\bullet}$, if for all $i$, $\textup{Gr}^{i}\KK^{\bullet}[i]$ belongs to the heart of the perverse $t$-structure on $\cD_{\textup{indcons}}(X)$, then $\KK^{\bullet} \in{}^pD F^{\leq 0}_{\textup{indcons}}$. For the other inclusion observe that by completeness $\KK^i \simeq \lim_{j\geq i}\textup{cofib}(\KK^i \to \KK^j)$, and the result follows from the stability of $\cD^{\geq 0}_{\textup{indcons}}$ under limits (see \S \ref{properties_ind_cons}, (d)). 
\end{proof}

\begin{defn}\label{decalage}
 We define a functor $\textup{Dec} \colon \cD F_{\textup{indcons}}(X) \to \cD F_{\textup{indcons}}(X)$ which on objects is given by 
\begin{equation}\label{def_Dec}
\textup{Dec}(\KK^{\bullet})^{i} = ({}^B\tau_{\leq -i}\KK^{\bullet})(-\infty).
\end{equation}   
\end{defn}

\subsection{The category $\cD F_{\textup{cons}}(X)$}\label{cDF_cons}
Consider the following full subcategory $\cD F_{\textup{cons}}(X)$ of $\cD F_{\textup{indcons}}(X)$ consisting of objects $\KK^{\bullet}$ such that
\begin{enumerate}[(a)]
    \item $\KK^{\bullet}$ is complete and there exists an $N$ such that $\textup{Gr}^{i}(\KK^{\bullet})=0$ for $|i|>N$.
    \item $\textup{Gr}^{i}(\KK^{\bullet})$ belong to the full subcategory $\cD_{cons}(X)$ of $\cD_{\textup{indcons}}(X)$.
\end{enumerate}

We have the following

\begin{prop}\label{t_strtucture_DF_cons}
    $\cD F_{\textup{cons}}(X)$ is a symmetric monoidal stable $\infty$-category and the Beilinson $t$-structure (from $\cD F_{\textup{indcons}}(X)$) restricts to one on $\cD F_{\textup{cons}}(X)$. Moreover
    \begin{enumerate}[(a)]
        \item an object $\KK^{\bullet}$ in  $\cD F_{\textup{cons}}(X)$ is in the heart of the Beilinson $t$-structure iff $\textup{Gr}^{i}[i]\KK^{\bullet}$ are perverse.
        \item The functor $\textup{Dec}$ (see Definition \ref{decalage}) preserves the subcategory $\cD F_{\textup{cons}}(X)$. 
    \end{enumerate}
\end{prop}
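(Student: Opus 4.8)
The statement has three parts to establish: first that $\cD F_{\textup{cons}}(X)$ is a symmetric monoidal stable $\infty$-subcategory; second that the Beilinson $t$-structure restricts; third the two itemized refinements (a) and (b). For the stability claim, I would check that $\cD F_{\textup{cons}}(X)$ is closed under finite limits, finite colimits, and the shift functor inside $\cD F_{\textup{indcons}}(X)$. Since $\textup{Gr}^{i}$ is exact (see \S\ref{operations_Filtered}, (f)) and commutes with all limits and colimits, a finite (co)limit of objects with $\textup{Gr}^{i}$ eventually vanishing and landing in $\cD_{\textup{cons}}(X)$ again has this property — here one uses that $\cD_{\textup{cons}}(X)$ is itself a stable subcategory of $\cD_{\textup{indcons}}(X)$ closed under finite (co)limits and shifts. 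Completeness is preserved under finite limits trivially, and under finite colimits because $\lim_i$ is exact; I would spell this out via the fiber sequence defining $\textup{Gr}^{i}$. For the symmetric monoidal structure one checks that the Day convolution tensor product on $\textup{Func}(\bbZ^{\textup{op}},\cD_{\textup{indcons}}(X))$ preserves the subcategory: $\textup{Gr}^{i}$ of a tensor product is a finite colimit of tensor products of $\textup{Gr}^{j}$'s (by the standard formula for graded pieces of a filtered tensor product), and $\cD_{\textup{cons}}(X)$ is closed under $\otimes$ since $\cD_{\textup{cons}}(X)$ is symmetric monoidal (\S\ref{D_cons}); the finiteness of the support of $\textup{Gr}^{\bullet}$ of each factor forces finiteness for the product.

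**Restriction of the $t$-structure.** To show the Beilinson $t$-structure restricts, it suffices to show that for $\KK^{\bullet} \in \cD F_{\textup{cons}}(X)$ the truncations ${}^{B}\tau_{\leq 0}\KK^{\bullet}$ and ${}^{B}\tau_{\geq 0}\KK^{\bullet}$ again lie in $\cD F_{\textup{cons}}(X)$; the truncation axioms and the subcategory conditions $({}^p\cD F_{\textup{cons}}^{\leq 0}, {}^p\cD F_{\textup{cons}}^{\geq 0})$ then follow formally. For the graded-support and constructibility conditions I would invoke Corollary \ref{Beilinson_t_structure_Gr_i_Cor}, (a): $\textup{Gr}^{i}({}^{B}\tau_{\leq j}\KK^{\bullet}) \simeq {}^p\tau_{\leq i+j}\textup{Gr}^{i}\KK^{\bullet}$. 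Since $\textup{Gr}^{i}\KK^{\bullet}$ is a constructible complex vanishing for $|i|>N$, its perverse truncation is again constructible and vanishes in the same range, so condition (b) of \S\ref{cDF_cons} and the support part of (a) hold for ${}^{B}\tau_{\leq j}\KK^{\bullet}$, and dually (using the fiber sequence $\KK^{i}[i]\to\textup{Gr}^{i}\KK^{\bullet}[i]\to\KK^{i+1}[i+1]$ and $t$-exactness of $\textup{Gr}^{i}[i]$ from Lemma \ref{Beilinson_t_structure_Gr_i}) for ${}^{B}\tau_{\geq j}\KK^{\bullet}$. The remaining point is completeness of the truncations, which I would deduce from completeness of $\KK^{\bullet}$ together with the fact that ${}^{B}\tau$ interacts well with the evaluation functors — concretely, the fiber sequence ${}^{B}\tau_{\leq 0}\KK^{\bullet}\to\KK^{\bullet}\to{}^{B}\tau_{\geq 1}\KK^{\bullet}$ and stability of complete objects under finite limits (they form a fiber of $\lim_i$, an exact functor). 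I expect this completeness bookkeeping to be the most delicate part of this half.

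**Items (a) and (b).** Item (a) is essentially Corollary \ref{Beilinson_t_structure_Gr_i_Cor}, (b) restricted to $\cD F_{\textup{cons}}(X)$: its objects are complete by definition, so an object is in the heart of the Beilinson $t$-structure iff $\textup{Gr}^{i}[i]\KK^{\bullet}$ is perverse for all $i$. Once we know the $t$-structure restricts, the heart of the restricted $t$-structure is exactly the intersection of the ambient heart with $\cD F_{\textup{cons}}(X)$, so (a) is immediate. For item (b), I would compute $\textup{Gr}^{i}(\textup{Dec}(\KK^{\bullet}))$. Unwinding Definition \ref{decalage}, $\textup{Dec}(\KK^{\bullet})^{i}=({}^{B}\tau_{\leq -i}\KK^{\bullet})(-\infty)=\colim_j ({}^{B}\tau_{\leq -i}\KK^{\bullet})^{j}$; then $\textup{Gr}^{i}\textup{Dec}(\KK^{\bullet})=\textup{cofib}\big((\textup{Dec}\KK^{\bullet})^{i+1}\to(\textup{Dec}\KK^{\bullet})^{i}\big)$, which by the standard décalage identity (this is the content being mimicked from \cite[Theorem 5.4]{BMS19}) is identified with $({}^p\HH^{-i}$ of an appropriate piece$)[i]$; in any case it is a subquotient, in the perverse sense, of $\colim_j\textup{Gr}^{m}\KK^{\bullet}$ for finitely many $m$ near $i$. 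Since each $\textup{Gr}^{m}\KK^{\bullet}$ is constructible and vanishes outside $|m|\le N$, the resulting $\textup{Gr}^{i}\textup{Dec}(\KK^{\bullet})$ is constructible (the colimit is essentially finite because the filtration on $\KK^{\bullet}$ is finite) and vanishes for $|i|$ large, so $\textup{Dec}(\KK^{\bullet})\in\cD F_{\textup{cons}}(X)$. Completeness of $\textup{Dec}(\KK^{\bullet})$ I would check directly from the finiteness of the filtration: for $|i|$ large, $\textup{Dec}(\KK^{\bullet})^{i}$ stabilizes (it equals $\KK^{\bullet}(-\infty)$ for $i\ll 0$ and $0$ for $i\gg 0$), so $\lim_i\textup{Dec}(\KK^{\bullet})^{i}=0$.

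**Main obstacle.** The main obstacle is not any single deep input but rather the careful tracking of completeness through all the operations — truncations, tensor products, and especially $\textup{Dec}$, which involves a colimit $(-\infty)$ that a priori threatens completeness. The key leverage is that on $\cD F_{\textup{cons}}(X)$ the filtration is \emph{finite} (condition (a)), so every colimit over $\bbZ$ that appears is eventually constant and hence computed by a finite colimit; this is what keeps us inside the constructible, complete world. Once that observation is in hand, everything reduces to the exactness of $\textup{Gr}^{i}$, $\textup{ev}_{i}$, and $\lim_i$ together with the already-established Corollary \ref{Beilinson_t_structure_Gr_i_Cor}.
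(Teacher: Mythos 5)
Your proposal is correct and follows essentially the same route as the paper: the symmetric monoidal part via the Day convolution structure and the fact that $\cD_{\textup{cons}}(X)$ is a symmetric monoidal subcategory; stability and restriction of the $t$-structure via exactness of $\textup{Gr}^i$ together with Corollary~\ref{Beilinson_t_structure_Gr_i_Cor}; item~(a) as the specialization of Corollary~\ref{Beilinson_t_structure_Gr_i_Cor} to $\cD F_{\textup{cons}}(X)$ (all of whose objects are complete); and item~(b) by identifying $\textup{Gr}^{i}\textup{Dec}(\KK^{\bullet})$ with a $(-\infty)$-colimit of a Beilinson-heart object whose filtration is finite, hence an essentially finite colimit of constructible complexes. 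The paper's proof is considerably more terse — it records these steps as "clear" and flags only the key leverage point that, under conditions (a) and (b) of the definition, an object of $\cD F_{\textup{cons}}(X)$ has all its levels $\KK^i$ in $\cD_{\textup{cons}}(X)$ — whereas you spell out the completeness bookkeeping (for truncations and for $\textup{Dec}$) that the paper leaves implicit; this is a faithful expansion rather than a different argument.
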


\begin{proof}
Conditions (a) and (b) in the definition of $\cD F_{\textup{cons}}(X)$ together imply that,  an object $\KK^{\bullet}$ in $\cD F_{\textup{indcons}}(X)$ belongs to $\cD F_{\textup{cons}}(X)$ iff $\KK^i$'s are in $\cD_{\textup{cons}}(X)$. 

Now since $\cD_{\textup{cons}}(X)$ is itself a symmetric monoidal subcategory of $\cD_{\textup{indcons}}(X)$, the symmetric monoidal part of the claim follows from the definition of day convolution structure \cite[\S 2.23]{GP18} on $\cD F_{\textup{indcons}}(X)$. Since $\textup{Gr}^i$ are exact functors, stability and that the Beilinson $t$-structure restricts to $\cD F_{\textup{cons}}(X)$ is clear. (a) follows from Corollary \ref{Beilinson_t_structure_Gr_i_Cor}, (a). Finally (b) follows from Corollary \ref{Beilinson_t_structure_Gr_i_Cor}, (b) since the inclusion of $\cD_{\textup{cons}}(X)$ into $\cD_{\textup{indcons}}(X)$ is $t$-exact for the perverse $t$-structure (see \S \ref{properties_ind_cons}, (c)). 
\end{proof}

\bibliographystyle{alpha}
\bibliography{arXiv.bib}

\end{document}